\newtheorem{theorem}{Theorem}[section]
\newtheorem{lemma}[theorem]{Lemma}
\newtheorem{definition}[theorem]{Definition}
\newtheorem{remark}[theorem]{Remark}
\author[1]{Birgul Koc \thanks{Corresponding author}}
\author[2]{Tomás Chacón Rebollo}
\author[3]{Samuele Rubino}
\affil[1]{IMUS, Universidad de Sevilla, Spain, birkoc@alum.us.es}
\affil[2]{Departamento EDAN $\&$ IMUS, Universidad de Sevilla, Spain, chacon@us.es}
\affil[3]{Departamento EDAN $\&$ IMUS, Universidad de Sevilla, Spain, samuele@us.es}
\begin{document}
\title{Uniform Bounds with Difference Quotients for Proper Orthogonal Decomposition Reduced Order Models of the Burgers Equation}
\date{}
\maketitle

\begin{abstract}
In this paper, we prove
uniform error bounds for 
proper orthogonal decomposition (POD) reduced order modeling (ROM)
of Burgers equation, considering difference quotients (DQs), introduced in \cite{KV01}.
In particular, we study the behavior of the DQ ROM error bounds by considering $L^2(\Omega)$ and $H^1_0(\Omega)$ POD spaces and $l^{\infty}(L^2)$ and natural-norm errors. We present some meaningful numerical tests checking the behavior of error bounds.
Based on our numerical results, DQ ROM errors are several orders of magnitude smaller than noDQ ones (in which the POD is constructed in a standard way, i.e., without the DQ approach) in terms of the energy kept by the ROM basis. Further, noDQ ROM errors have an optimal behavior, while DQ ROM errors, where the DQ is added to the POD process, demonstrate an optimality/super-optimality behavior. It is conjectured that this possibly occurs because the DQ inner products allow the time dependency in the ROM spaces to make an impact.
\end{abstract}

{\bf{Keywords:}} Difference Quotients, Proper Orthogonal Decomposition, Reduced Order Models, Error Analysis, Optimality.

%%%%%%%%%%%%%%%%%%%%%%%%%%%%%%%%%%%%%%%%%%%%%%%%%%%%%%%%%%%%%%%%%%%%%%%%
\section{Introduction} \label{sec:intro}
Reduced order models (ROMs) are one of the most popular low-dimensional surrogate models to obtain the numerical simulation of linear and nonlinear systems \cite{crommelin2004strategies,gunzburger2017ensemble,hesthaven2015certified,HLB96,noack2011reduced,perotto2017higamod,quarteroni2015reduced,sapsis2009dynamically,stefanescu2015pod,taira2019modal,fernandez2018computational,holmes2012turbulence,koc2019commutation,mou2021data}. To build a low dimensional ROM, one can use the following most popular frameworks such as proper orthogonal decomposition (POD)~\cite{HLB96,noack2011reduced,fareed2018note,volkwein2013proper,fukunaga1990introduction,banks1998reduced,afanasiev2001adaptive,kunisch1999control,ly2001modeling,banks2000nondestructive,ballarin2015supremizer,azaiez2021cure,iollo2000stability,bergmann2009enablers,weller2010numerical}, reduced basis methods (RBM) \cite{rozza2007stability,quarteroni2014reduced,chacon2016reduced}, empirical interpolation method (EIM), and discrete empirical interpolation method (DEIM) \cite{rowley2005model,antil2014application,drmac2016new,rebollo2017certified}. In this work, we specifically use the POD framework to build the ROM.

The ROM error analysis for the parabolic problems has been worked in \cite{iliescu2014are,KV01,kunisch2002galerkin,rubino2018streamline,singler2014new,locke2020new}. \textit{Difference quotients (DQs)} (i.e., scaled snapshots of the form $(u^{n} - u^{n-1}) / \Delta t, \, n = 1, \ldots, N$) were proposed by Kunisch and Volkwein in Remark 1 of~\cite{KV01} as a means to achieve time discretization optimality. The effect of the DQs in linear applications and the ROM error analysis for the parabolic problems are investigated in \cite{chen2016multilevel,li2022efficient} and \cite{iliescu2014are,KV01,locke2020new,kean2020error}, respectively. The authors in \cite{iliescu2014are} provide the DQ ROM numerical results
for the Burgers equation without providing any numerical analysis for the results. Thus, we emphasize that, to our knowledge, the ROM error analysis considering the DQs for nonlinear problems has never been proven. 

The main aim of the paper is to provide an error analysis for the ROM approximation of Burgers equations~\eqref{eqn:burgers} considering DQ, $L^2(\Omega)$ and $H_0^1(\Omega)$ POD space frameworks and the $l^{\infty}(L^2(\Omega))$ and natural-norms. We provide uniform ROM error bounds and present numerical tests in which we observe a much smaller error of the DQ ROM errors than the noDQ ones, with respect to energy kept in the ROM basis. This indicates that the DQ ROM keeps better-selected information for the same amount of energy due to the difference quotients.

The rest of the paper is organized as follows: In Section~\ref{sec:POD}, we briefly describe the noDQ/DQ POD
methodology and provide error bounds for the POD projection error. DQ Crank-Nicolson (CN) ROM for the Burgers equation~\eqref{eqn:burgers} is defined in Section~\ref{sec:DQ_err_estimate}. The error analysis of the DQ CN ROM is discussed in Section \ref{sec:dq_error_analysis}. 
Specifically, in Section \ref{sec:behavior_error_bounds}, we classify the optimality type for the different ROM discretization errors.
Furthermore, in Sections~\ref{sec:uniform_max_l2_error} and \ref{sec:uniform_natural_norm_error}, we provide $l^{\infty}(L^2)$ and natural-norm, i.e., $l^{\infty}(L^2) \cap l^{2}(H_0^1) $ DQ ROM error bounds considering $L^2(\Omega)$ and $H^1_0(\Omega)$
POD bases and their optimality behavior, respectively. As a mathematical model, we use viscous Burgers equation:
\begin{eqnarray} \label{eqn:burgers}
\begin{cases}
\quad	u_t -\nu u_{xx} + u u_x =  f \, , \quad x \in \Omega, \, t \in (0,T), \\
\quad	u(0,t) = u(1,t) = 0 \, , \quad t \in (0,T), \\
\quad u(x,0) = u_0(x)\, , \quad x \in \Omega.
\end{cases}
\end{eqnarray}
In Section~\ref{sec:numerical_results}, we provide ROM error bounds considering noDQ, DQ framework, $L^2(\Omega)$, $H_0^1(\Omega)$ POD spaces, and $l^{\infty}(L^2)$, natural-norm errors. Specifically, in Sections~\ref{sec:nodq_rom_numerics} and \ref{sec:dq_rom_numerics}, we numerically discuss the optimality behavior of the noDQ and DQ ROM errors, respectively, considering $L^2(\Omega)$, $H_0^1(\Omega)$ POD spaces, and $l^{\infty}(L^2)$, natural-norm errors. In Section~\ref{sec:nodq_dq_cn_pod_rom_comparison_numerics}, we numerically compare and discuss the noDQ and DQ ROM errors considering both POD spaces and norm errors. Finally, Section~\ref{sec:conclusions} presents the conclusions and future research directions.

%%%%%%%%%%%%%%%%%%%%%%%%%%%%%%%%%%%%%%%%%%%%%%%%%%%%%%%%%%%%%%%%%%%%%%%%
\section{Proper Orthogonal Decomposition (POD)} \label{sec:POD}
This section builds a general POD framework with/without DQs. The construction of the noDQ/DQ POD basis is straightforward and can be summarized in the following steps: (i) We collect a snapshot data set S:=$\{u^0, u^1,..., u^N\}$ that is contained in a real Hilbert space $\mathcal{H}$ and by solving \eqref{eqn:burgers} for equispaced
parameter $t_{j}=j\Delta t, \forall j=0,..,N$ where $\Delta t =T/N$. (ii) We obtain noDQ/DQ orthonormal POD basis functions (usually called POD modes), i.e., $\{\varphi_1,...,\varphi_r\} \subset S$ 
with fixed $r>0$ value by solving the following generalized minimization problem:
\begin{align}\label{eqn:min_problem_general}
\min_{(\varphi_i, \varphi_j)_{\mathcal{H}}= \delta_{ij}} \|u-P_ru\|^2_{*}
\end{align}
where the $*$ norm is defined as
\begin{align} \label{eqn:pod_defn}
\|v\|_{*}:=  \frac{1}{\sqrt{M_{\xi}}} \Bigg( \sum_{i=0}^{N} \|v^i\|^2_{\mathcal{H}} + \xi \sum_{i=1}^{N} \|\partial v^i\|^2_{\mathcal{H}} \Bigg)^{1/2},
\end{align}
with the weight $M_{\xi}$ and tuning parameter $\xi$, and the DQs, i.e., $\partial v^k$, defined by 
\begin{align}\label{eqn:DQ_defn}
\partial v^k := \frac{ v^{k} - v^{k-1} }{ \Delta t },
\end{align}
and $P_r : \mathcal{H} \to \mathcal{H}$ is the orthogonal projection onto $ X^r := \mathrm{span}\{ \varphi_i \}_{i=1}^r $ given by
\begin{align}\label{eqn:POD_projection_H}
P_r u = \sum_{i=1}^r ( u, \varphi_i )_{\mathcal{H}} \, \varphi_i \, , \quad u \in \mathcal{H}.
\end{align}

Depending on the choice of the tuning parameter $\xi$ (we let $\xi$ to be 0 or 1) in \eqref{eqn:pod_defn}, \eqref{eqn:pod_basis} yields the noDQ/DQ POD. To be more specific, the choice of $\xi=0$ discards the DQ summation in \eqref{eqn:pod_defn}; thus, the minimization problem \eqref{eqn:min_problem_general} is solved for just the given snapshot data set $S$ with the weight $W_{\xi}=N+1$ to obtain the standard POD (noDQ POD) basis. On the other hand, choosing $\xi=1$ (taking into account the DQ summation) leads to \eqref{eqn:min_problem_general} to be solved for the different snapshot data set $S=\{u^0,..,u^N,\partial u^1,...,\partial u^N\}$ with $M_{\xi}=2N+1$, which results in the DQ POD basis.

In order to solve \eqref{eqn:min_problem_general}, one considers the eigenvalue problem
\begin{align} \label{eqn:eig_problem}
K v = \lambda v,
\end{align}
where $K_{ji}:=(u^j,u^i)_{\mathcal{H}}, i,j=1,..,M_{\xi}$ is the snapshot correlation matrix,
$\lambda_1 \geq \lambda_2 \geq,..., \geq \lambda_d > 0$ are the positive eigenvalues, and $v_k, k=1,...,d$, are the
associated eigenvectors. Then, the solution of \eqref{eqn:min_problem_general} is given by
\begin{align} \label{eqn:pod_basis}
\varphi_i=\frac{1}{\sqrt{\lambda_i}} ~\sum_{j=1}^{M_{\xi}} (v_i)_j ~u^j.
\end{align}

To obtain the $L^2$ or $H_0^1$ POD space framework, one needs to choose the Hilbert space $\mathcal{H}$ in \eqref{eqn:pod_defn} and \eqref{eqn:eig_problem} as either $L^2$ or $H_0^1$, respectively. For example, if one needs to create the DQ-H01 POD, one should choose $\mathcal{H}=H_0^1$ in \eqref{eqn:pod_defn} and \eqref{eqn:eig_problem} and $\xi=1$ in \eqref{eqn:pod_defn} to solve the minimization problem \eqref{eqn:min_problem_general} and eigenvalue problem \eqref{eqn:eig_problem}.

Now, we provide DQ POD approximation errors in Lemma~\ref{lemma:DQ_POD_approx_errors_Wnorm} proven in \cite{iliescu2014are} by considering different norms and projections onto $ X^r = \mathrm{span}\{ \varphi_i \}_{i=1}^r \subset \mathcal{H} $. Furthermore, we present the uniform DQ POD projection error bounds in Theorem~\ref{thm:DQ_uniform_estimates} proven in \cite{koc2021optimal}, Theorem 3.7. These results are necessary to prove DQ ROM error bounds and show their optimality behavior in Section~\ref{sec:DQ_err_estimate}. 

\begin{lemma} \label{lemma:DQ_POD_approx_errors_Wnorm}
Let $ X^r = \mathrm{span}\{ \varphi_i \}_{i=1}^r \subset \mathcal{H} $, let $ P_r : \mathcal{H} \to \mathcal{H} $ be the orthogonal projection onto $ X^r $ as defined in \eqref{eqn:POD_projection_H}, and let $d$ be the number of positive POD eigenvalues, where $\lambda_i^\mathrm{DQ}$ represents the DQ POD eigenvalues for the collection $ U^\mathrm{DQ} = \{ u^n \}_{n=0}^{N} \cup \{ \partial u^n \}_{n=1}^{N}  $ described above.  If $ W $ is a real Hilbert space with $ U^\mathrm{DQ} \subset W $ and $ R_r : W \to W $ is a bounded linear projection onto $ X^r $, then
\begin{subequations}
\begin{align}
\frac{1}{2N+1} \Bigg( \sum_{n=0}^N \left\| u^n - P_r u^n \right\|_W^2 +  \sum_{n=1}^{N} \left\| \partial u^n - P_r \partial u^n \right\|_W^2 \Bigg) &= \sum_{i=r+1}^{d} \lambda_i^\mathrm{DQ} \| \varphi_i \|_W^2,\label{eqn:DQ_POD_proj_error_W_norm1}\\
\frac{1}{2N+1} \Bigg( \sum_{n=0}^N \left\| u^n - R_r u^n \right\|_W^2 +  \sum_{n=1}^{N} \left\| \partial u^n - R_r \partial u^n \right\|_W^2 \Bigg) &= \sum_{i=r+1}^{d} \lambda_i^\mathrm{DQ} \| \varphi_i - R_r \varphi_i \|_W^2.    \label{eqn:DQ_POD_proj_error_W_norm2}
\end{align}
\end{subequations}
\end{lemma}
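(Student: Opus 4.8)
The plan is to treat this as a spectral identity for the POD correlation operator, exactly as in the classical noDQ case, but carrying the extended snapshot set $U^{\mathrm{DQ}}$ and keeping the two inner products $(\cdot,\cdot)_{\mathcal{H}}$ and $(\cdot,\cdot)_W$ carefully separated throughout. First I would recall the spectral setup underlying \eqref{eqn:min_problem_general}: the modes $\varphi_i$ are the $\mathcal{H}$-orthonormal eigenvectors of the correlation operator
\[
K^{\mathrm{DQ}} v = \frac{1}{2N+1}\left(\sum_{n=0}^N (u_h^n, v)_{\mathcal{H}}\, u_h^n + \sum_{n=1}^N (\partial u_h^n, v)_{\mathcal{H}}\, \partial u_h^n\right),
\]
with eigenvalues $\lambda_i^{\mathrm{DQ}} \ge 0$, of which $d$ are strictly positive. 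The central identity I would extract from this is the \emph{coefficient orthogonality}
\[
\sum_{n=0}^N (u_h^n, \varphi_i)_{\mathcal{H}}(u_h^n, \varphi_k)_{\mathcal{H}} + \sum_{n=1}^N (\partial u_h^n, \varphi_i)_{\mathcal{H}}(\partial u_h^n, \varphi_k)_{\mathcal{H}} = (2N+1)\,\lambda_i^{\mathrm{DQ}}\,\delta_{ik},
\]
which is nothing but $(K^{\mathrm{DQ}}\varphi_i, \varphi_k)_{\mathcal{H}} = \lambda_i^{\mathrm{DQ}}(\varphi_i,\varphi_k)_{\mathcal{H}}$ written out.

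Next, since $d$ counts the positive eigenvalues, every element of $U^{\mathrm{DQ}}$ lies in $\mathrm{span}\{\varphi_i\}_{i=1}^d$ (the zero-eigenvalue modes are $\mathcal{H}$-orthogonal to the snapshot space), so I would expand $u_h^n = \sum_{i=1}^d (u_h^n,\varphi_i)_{\mathcal{H}}\varphi_i$ and likewise for $\partial u_h^n$. For the orthogonal projection $P_r$, the tail of this expansion gives $u_h^n - P_r u_h^n = \sum_{i=r+1}^d (u_h^n,\varphi_i)_{\mathcal{H}}\varphi_i$. Taking $W$-norms, expanding the resulting double sum over $i,k > r$, summing over all snapshots, and inserting the coefficient orthogonality collapses the double sum to its diagonal and produces $(2N+1)\sum_{i=r+1}^d \lambda_i^{\mathrm{DQ}}\|\varphi_i\|_W^2$; dividing by $2N+1$ yields \eqref{eqn:DQ_POD_proj_error_W_norm1}.

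For the general bounded projection $R_r$, I would use that $R_r$ restricts to the identity on $X^r$, so $\varphi_i - R_r\varphi_i = 0$ for $i \le r$ and the error again telescopes to a tail, $u_h^n - R_r u_h^n = \sum_{i=r+1}^d (u_h^n,\varphi_i)_{\mathcal{H}}(\varphi_i - R_r\varphi_i)$. Repeating the same $W$-norm expansion and applying the coefficient orthogonality gives \eqref{eqn:DQ_POD_proj_error_W_norm2}, now with the factor $\|\varphi_i - R_r\varphi_i\|_W^2$ replacing $\|\varphi_i\|_W^2$.

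The step I expect to require the most care is keeping the two inner products distinct during the $W$-norm expansion: the coefficients live in $\mathcal{H}$ and are what the POD eigenproblem orthogonalizes, while the geometry of the off-diagonal cross terms is carried by the factors $(\varphi_i,\varphi_k)_W$. The key point — and the reason the result holds in an arbitrary $W$ — is that the $\varphi_i$ need \emph{not} be $W$-orthogonal; the collapse to a single sum comes entirely from the $\delta_{ik}$ in the \emph{coefficient} orthogonality, after which only the diagonal $W$-factors $(\varphi_i,\varphi_i)_W = \|\varphi_i\|_W^2$ (resp. $\|\varphi_i - R_r\varphi_i\|_W^2$) survive. I would also make sure the summation in the coefficient orthogonality runs over all of $U^{\mathrm{DQ}}$ and not just one of the two families, since the snapshots and their difference quotients enter $K^{\mathrm{DQ}}$ symmetrically.
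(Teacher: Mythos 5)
Your proposal is correct: the coefficient-orthogonality identity $(K^{\mathrm{DQ}}\varphi_i,\varphi_k)_{\mathcal{H}}=\lambda_i^{\mathrm{DQ}}\delta_{ik}$, the expansion of each snapshot in $\mathrm{span}\{\varphi_i\}_{i=1}^{d}$, and the collapse of the $W$-norm double sums to their diagonals (with $\varphi_i-R_r\varphi_i=0$ for $i\le r$ in the second identity) is exactly the argument needed. The paper itself does not prove this lemma but quotes it from \cite{iliescu2014are}, and your proof is essentially the standard one given there, so there is nothing further to compare.
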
x

\begin{theorem} \label{thm:DQ_uniform_estimates}
Let $ X^r = \mathrm{span}\{ \varphi_i \}_{i=1}^r \subset \mathcal{H} $, let $ P_r : \mathcal{H} \to \mathcal{H} $ be the orthogonal projection onto $ X^r $ as defined in \eqref{eqn:POD_projection_H}, let $d$ be the number of positive POD eigenvalues, where $\lambda_i^\mathrm{DQ}$ represents the DQ POD eigenvalues for $ U^\mathrm{DQ}$, and let $t_j:=j \Delta t, \forall j=0,...,N$ where $\Delta t = T/N$.  If $ W $ is a real Hilbert space with $ U^\mathrm{DQ} \subset W $ and $ R_r : W \to W $ is a bounded linear projection onto $ X^r $, then
\begin{subequations} 
\begin{align}
\max_{0 \leq k \leq N} \left\| u^k - P_r u^k \right\|_{\mathcal{H}}^2  &\leq  \mathfrak{C} \sum_{i = r+1}^{d} \lambda_i^\mathrm{DQ},\label{eqn:DQ_POD_uniform_bound1}\\
\max_{0 \leq k \leq N} \left\| u^k - P_r u^k \right\|_W^2  &\leq \mathfrak{C} \sum_{i = r+1}^{d} \lambda_i^\mathrm{DQ} \| \varphi_i \|_W^2,\label{eqn:POD_DQs_uniform_bound2}\\
\max_{0 \leq k \leq N} \left\| u^k - R_ru^k \right\|_W^2  &\leq  \mathfrak{C} \sum_{i = r+1}^{d} \lambda_i^\mathrm{DQ} \| \varphi_i - R_r \varphi_i \|_W^2,\label{eqn:DQ_POD_uniform_bound3}
\end{align}
\end{subequations}
where $\mathfrak{C} = 6 \max\{1,T^2 \}$.
\end{theorem}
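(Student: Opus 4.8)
The plan is to derive all three inequalities from a single ``summed-to-uniform'' argument, feeding it with the averaged projection-error identities of Lemma~\ref{lemma:DQ_POD_approx_errors_Wnorm}. Write $e^n := u_h^n - P_r u_h^n$ (respectively $e^n := u_h^n - R_r u_h^n$ for the third bound). The decisive structural fact is that a linear projection onto $X^r$ commutes with the discrete difference operator in~\eqref{eqn:DQ_defn}: since $P_r$ and $R_r$ are linear, $P_r(\partial u_h^n)=\partial(P_r u_h^n)$ and likewise for $R_r$, so that $\partial e^n = \partial u_h^n - P_r\,\partial u_h^n$. Consequently the difference quotients of the projection error are themselves projection errors of the DQ snapshots, and Lemma~\ref{lemma:DQ_POD_approx_errors_Wnorm} controls both families simultaneously. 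Denoting by $S$ the relevant eigenvalue tail on the right-hand side of~\eqref{eqn:DQ_POD_proj_error_W_norm1} or~\eqref{eqn:DQ_POD_proj_error_W_norm2} (namely $\sum_{i=r+1}^{d}\lambda_i^{\mathrm{DQ}}$, $\sum_{i=r+1}^{d}\lambda_i^{\mathrm{DQ}}\|\varphi_i\|_W^2$, or $\sum_{i=r+1}^{s}\lambda_i^{\mathrm{DQ}}\|\varphi_i - R_r\varphi_i\|_W^2$), the lemma gives at once
\begin{equation*}
\sum_{n=0}^{N}\|e^n\|^2 \le (2N+1)\,S, \qquad \sum_{n=1}^{N}\|\partial e^n\|^2 \le (2N+1)\,S,
\end{equation*}
in whichever norm ($\mathcal{H}$ or $W$) is appropriate.

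The core step is to convert these two $\ell^2$-in-time bounds into a bound on $\max_k\|e^k\|^2$ with a constant independent of $N$. The $\ell^2$ bound on $\|e^n\|$ alone only yields $\max_k\|e^k\|^2 \le (2N+1)S$, which is useless; the DQ control on $\|\partial e^n\|$ is exactly what removes the $N$-dependence, playing the role of a discrete Sobolev embedding $\|f\|_{L^\infty}^2 \lesssim \|f\|_{L^2}^2 + \|f\|_{L^2}\|f'\|_{L^2}$. Concretely, starting from the averaging identity
\begin{equation*}
\|e^k\|^2 = \frac{1}{N+1}\sum_{m=0}^{N}\|e^m\|^2 + \frac{1}{N+1}\sum_{m=0}^{N}\bigl(\|e^k\|^2-\|e^m\|^2\bigr),
\end{equation*}
I would bound the first term by $2S$ via $\tfrac{2N+1}{N+1}\le 2$, and telescope each difference through $\|e^n\|^2-\|e^{n-1}\|^2 = \Delta t\,(\partial e^n,\,e^n+e^{n-1})$. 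Estimating $\|e^n+e^{n-1}\|\le 2\max_j\|e^j\|$ pulls the maximum out of the sum, and Cauchy--Schwarz together with $N\Delta t = T$ reduces the remainder to $\Delta t\sum_{n}\|\partial e^n\| \le \sqrt{3}\,T\,S^{1/2}$. Writing $A := \max_k\|e^k\|^2$, this yields a quadratic inequality of the form $A \le 2S + c\,T\,(AS)^{1/2}$, and solving it for $A$ is what converts the linear factor $T$ into the $T^2$ appearing in the stated constant.

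I expect the genuine obstacle to be precisely this last conversion — squeezing out the stated value $C = 6\max\{1,T^2\}$ while keeping it uniform in $N$. The bookkeeping is delicate: the telescoped sum must be handled for both $k \ge m$ and $k < m$, the weights $\Delta t(2N+1)$ and $\Delta t\sqrt{N}$ must be collapsed to $T$ through $N\Delta t=T$ (with the $\tfrac1N$ remainders absorbed into the $\max\{1,\cdot\}$), and the $\max_k\|e^k\|$ surfacing on the right-hand side must be absorbed back to the left via the quadratic inequality, which is the mechanism upgrading $T$ to $T^2$. Once the master estimate is in place, the three assertions follow by specialising $(W,R_r)$: \eqref{eqn:DQ_POD_uniform_bound1} takes $W=\mathcal{H}$, $R_r=P_r$ and uses $\|\varphi_i\|_{\mathcal{H}}=1$; \eqref{eqn:POD_DQs_uniform_bound2} keeps the $\mathcal{H}$-orthogonal $P_r$ but measures the error in $W$, invoking~\eqref{eqn:DQ_POD_proj_error_W_norm1}; and \eqref{eqn:DQ_POD_uniform_bound3} uses the general bounded projection $R_r$ together with~\eqref{eqn:DQ_POD_proj_error_W_norm2}, the commutation $\partial(R_r u_h^n)=R_r\,\partial u_h^n$ again guaranteeing that the DQ control transfers to $R_r$.
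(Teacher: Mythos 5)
Your overall architecture is the right one, and it matches the proof this paper actually relies on (note the theorem is not proved in this paper at all; it is quoted from Theorem 3.7 of \cite{koc2021optimal}): linearity of $P_r$ and $R_r$ gives the commutation $\partial(P_r u_h^n) = P_r\,\partial u_h^n$, so Lemma~\ref{lemma:DQ_POD_approx_errors_Wnorm} bounds both $\sum_n\|e^n\|^2$ and $\sum_n\|\partial e^n\|^2$ by $(2N+1)S$ (in your notation $S$ for the relevant eigenvalue tail), and a summed-to-uniform estimate then yields all three bounds by specializing $(W,R_r)$ exactly as you do at the end. Where you diverge is the core conversion step, and the divergence is not cosmetic. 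The reference proof telescopes the error \emph{vectors}: for any $k,m$ one writes $e^k = e^m \pm \Delta t\sum_{n}\partial e^n$, so that by the triangle and Cauchy--Schwarz inequalities
\begin{align*}
\|e^k\|^2 \;\le\; 2\|e^m\|^2 + 2\Delta t^2 N\sum_{n=1}^N\|\partial e^n\|^2
\;=\; 2\|e^m\|^2 + \frac{2T^2}{N}\sum_{n=1}^N\|\partial e^n\|^2 ,
\end{align*}
and averaging over $m=0,\dots,N$ together with the elementary bounds $2(2N+1)/(N+1)\le 4$ and $2(2N+1)/N\le 6$ gives $\max_k\|e^k\|^2 \le 6\max\{1,T^2\}\,S$ directly. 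The maximum never appears on the right-hand side, so nothing has to be absorbed and the constant collapses cleanly to the stated value.

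You instead telescope the squared norms, $\|e^n\|^2-\|e^{n-1}\|^2 = \Delta t(\partial e^n,\, e^n+e^{n-1})$, and bound $\|e^n+e^{n-1}\|\le 2\max_j\|e^j\|$; this reintroduces $A=\max_k\|e^k\|^2$ on the right and forces the quadratic inequality $A \le 2S + 2\sqrt{3}\,T(AS)^{1/2}$. That inequality does close --- Young's inequality gives $A \le 4S + 12T^2S$ --- so your argument genuinely proves the three estimates with a constant of the form $c\max\{1,T^2\}$, and every later use of the theorem in this paper (where $C$ is generic) would survive. But it cannot reproduce the stated constant: absorbing $2\sqrt{3}\,T(AS)^{1/2}$ with any parameter $\theta\in(0,1)$ leaves a $T^2$-coefficient $3/(\theta(1-\theta))\ge 12$, so the best this route delivers is roughly $16\max\{1,T^2\}$, versus the asserted $C=6\max\{1,T^2\}$. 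You correctly identified this conversion as the crux; the fix is simply to telescope \emph{before} squaring (work with the vectors) rather than after (work with the norms), which eliminates the quadratic inequality and with it the loss in the constant.
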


\begin{remark}
The bounds in Lemma~\ref{lemma:DQ_POD_approx_errors_Wnorm} and Theorem~\ref{thm:DQ_uniform_estimates} are still valid if one replaces the snapshots data $\{u^0, u^1,..., u^N\}$, which are the continuous solution data in this paper as in \cite{koc2021optimal}, with finite element (FE) solutions $\{u_h^0, u_h^1,..., u_h^N\}$, where $h$ is the spatial discretization parameter. Furthermore, the data set used to generate the POD basis in \eqref{eqn:min_problem_general} should be the same as the data set used in the POD projection error bounds in Lemma~\ref{lemma:DQ_POD_approx_errors_Wnorm} and Theorem~\ref{thm:DQ_uniform_estimates}.
\end{remark}

\begin{remark} \label{remark:model_names}
The construction of all ROMs, which are used in the following sections, are obtained by using Crank-Nicolson and Galerkin time and space discretizations, respectively. However, they differ from each other based on two main criteria: noDQ/DQ and $L^2$/$H_0^1$ POD frameworks. Thus, when we label the name of the models for brevity, we drop CN, POD, and ROM acronyms, and, for clarity, we consider noDQ/DQ and $L^2$/$H_0^1$ acronyms.
\end{remark}

%%%%%%%%%%%%%%%%%%%%%%%%%%%%%%%%%%%%%%%%%%%%%%%%%%%%%%%%%%%%%%%%%%%%%%%%
\section{Reduced Order Modeling (ROM)} \label{sec:DQ_err_estimate}
In this section, we present a numerical method for the Burgers equation~\eqref{eqn:burgers}, which is the proper orthogonal decomposition reduced order model.

First, we define the function space $X = H_0^1(\Omega)$ endowed with the inner product $ (u,v)_{H^1_0} = ( u_x, v_x)_{L^2} $. We take $u(\cdot,t)\in X$, $t\in [0,T]$ to be the weak solution of the weak formulation of the Burgers equation with homogeneous Dirichlet boundary conditions:
\begin{align}\label{eqn:weak_form_burgers}
(\partial_{t}u,v)_{L^2} + \nu( u_x, v_x)_{L^2} + (u u_x, v)_{L^2} = (f,v)_{L^2}\quad \forall v\in X.
\end{align}

Applying Crank-Nicolson and Galerkin 
discretizations in time and space, respectively to the weak formulation of the Burgers equation~\eqref{eqn:weak_form_burgers} results in the CN ROM: $\forall v_{r}\in X^r$,
\begin{align}\label{eqn:CN_POD_G_ROM}
\begin{aligned}
\left(\partial u_r^{n+1},v_r\right)_{L^2} + \nu( (u_r)_x^{n+1/2}, (v_r)_x)_{L^2} + (u_r^{n+1/2}  (u_r)_x^{n+1/2}, v_r)_{L^2} 
= (f^{n+1/2},v_{r})_{L^{2}},
\end{aligned}
\end{align}
where $ \partial u^{n+1}_r := ( u^{n+1}_r - u^n_r )/ \Delta t $. 

\begin{remark} \label{remark:consistency}
We use the notation $ z^{n+1/2} $ for any discrete-time function $ z $ to denote the average $z^{n+1/2} := \frac{1}{2} \left( z^{n+1} + z^n \right)$. However, for a continuous time function, we use $ f^{n+1/2} $ to denote $ f(t_n+\Delta t/2) $.
\end{remark}

%%%%%%%%%%%%%%%%%%%%%%%%%%%%%%%%%%%%%%%%%%%%%%%%%%%%%%%%%%%%%%%%%%%%%%%%
\section{Error Analysis} \label{sec:dq_error_analysis}
In this section, we prove uniform error bounds for the DQ ROM for the Burgers equation~\eqref{eqn:burgers}. Specifically, we provide $l^{\infty}(L^2)$ and natural-norm ($l^{\infty}(L^2) \cap l^2(H_0^1)$) error bounds considering $L^2(\Omega)$ and $H_0^1(\Omega)$ POD bases and their optimality behavior in Section~\ref{sec:uniform_max_l2_error} and Section~\ref{sec:uniform_natural_norm_error}, respectively.

We start the analysis by applying the CN time discretization to (8), which yields the following: $\forall v \in X$,
\begin{align} \label{eqn:EE1_CN}
\left(\partial u^{n+1},v\right)_{L^2} + \nu( u_x^{n+1/2}, v_x)_{L^2} + (u^{n+1/2}  u_x^{n+1/2}, v)_{L^2} 
= (f^{n+1/2},v)_{L^{2}} + \tau_n(v),
\end{align}
with the corresponding consistency error
\begin{align}
\begin{aligned} \label{eqn:EE2_CN}
    \tau_n(v) := \left( \partial u^{n+1} - \partial_t u(t_{n}+\Delta t /2), v \right)_{L^2} + \nu \left(  u_{xx} (t_{n} + \Delta t/2) - u_{xx}^{n+1/2} ,  v \right)_{L^2}\\
     + \left(  u^{n+1/2}  u_x^{n+1/2} - u(t_{n}+ \Delta t/2)  u_x(t_{n} + \Delta t/2) , v \right)_{L^2}.
\end{aligned}
\end{align}

The consistency error~\eqref{eqn:EE2_CN} does not depend on the $f$ term because of Remark~\ref{remark:consistency}. Furthermore, we assume the following regularity conditions on the continuous
solution $u$ and the terms in \eqref{eqn:EE2_CN}:
\begin{subequations}
\begin{align}
u & \in L^\infty(H_0^1(\Omega)),
\label{eqn:uniform_bound_cont_soln} \\
u_{ttt}, \:  (u_{tt})_{xx}, \: (u  (u_{tt})_x & + u_{tt}  u_x) \in L^{2}(0,T;L^{2}(\Omega)). \label{eqn:EE3_CN}
\end{align}
\end{subequations}

Now, we define the regularity constants, which are the bounds for the terms in \eqref{eqn:EE3_CN} as
\begin{align}\label{eqn:EE4_CN}
\begin{aligned}
I_{n,1}(u) &:= \| u_{ttt} \|_{L^2(t_n,t_{n+1};L^2)} + \|  (u_{tt})_{xx} \|_{L^2(t_n,t_{n+1};L^2)}  \\
& + \|u  (u_{tt})_x + u_{tt}  u_x \|_{L^2(t_n,t_{n+1};L^2)},\\
I_n(u) &:= \| u_{ttt} \|^2_{L^2(t_n,t_{n+1};L^2)} + \|  (u_{tt})_{xx} \|^2_{L^2(t_n,t_{n+1};L^2)} \\
& +  \|u  (u_{tt})_x +u_{tt}  u_x \|^2_{L^2(t_n,t_{n+1};L^2)},\\
I(u) &:= \| u_{ttt} \|^2_{L^2(0,T;L^2)} + \| (u_{tt})_{xx} \|^2_{L^2(0,T;L^2)} \\
&+ \|u  (u_{tt})_x + u_{tt}  u_x  \|^2_{L^2(0,T;L^2)}.
\end{aligned}
\end{align}

Now, we subtract \eqref{eqn:CN_POD_G_ROM} from \eqref{eqn:EE1_CN} by choosing $v=v_r$ in \eqref{eqn:EE1_CN} (since $X^r \subset X$), and label the discretized error $e^{n+1}:= u^{n+1} -u_r^{n+1}$. Then, one gets the following error equation: $\forall v_{r}\in X^r$,
\begin{align} \label{eqn:EE5_CN}
\begin{aligned}
\left(\partial e^{n+1}, v_r\right)_{L^2} + \nu (  e_x^{n+1/2}, (v_r)_x)_{L^2} \, + \, & (u^{n+1/2}  u_x^{n+1/2},v_r)_{L^2} \\
- \, & \, (u_r^{n+1/2} (u_r)_x^{n+1/2}, v_r)_{L^2}
= \tau_n(v_{r}) .
\end{aligned}
\end{align}

Then, we split the discretized error $e^{n+1}$ into two parts as
\begin{align}\label{eqn:EE6_CN}
\begin{aligned}
e^{n+1} = u^{n+1}-u^{n+1}_r &= (u^{n+1}-w_r^{n+1})-(u^{n+1}_r-w_r^{n+1}) \\
& =\eta^{n+1}-\phi^{n+1}_r,
\end{aligned}
\end{align}
where $w_r^{n+1}:=R_r u^{n+1}$ is chosen as the Ritz projection of $u^{n+1}$ on $X^r$ (for different analyses, $w_r$ could be chosen differently), which is defined as 
\begin{align} \label{eqn:ritz_proj}
\big((u-R_r u)_x, (v_r)_x\big)_{L^2}=0\quad \forall v_{r}\in X^r.
\end{align}
The POD projection error ($\eta$) and the discretization error ($\phi$) in \eqref{eqn:EE6_CN} are defined as
\begin{subequations}
\begin{align}
\eta^{n+1}:=u^{n+1}-w_r^{n+1}, \label{eqn:pod_proj_err} \\
\phi^{n+1}_r:=u^{n+1}_r-w_r^{n+1}. \label{eqn:disc_err}
\end{align}
\end{subequations}

During the analysis, we need a standard stability estimate of $u_r^n$ for the CN scheme, which is
\begin{align}  \label{eqn:stability_est_rom_soln}
\max_{ 0 \leq n \leq N  }\|u_r^{n}\|_{L^2}  \leq \mathbbm{C}.
\end{align}

Now, by using the error splitting~\eqref{eqn:EE6_CN}, one can rewrite the error equation~\eqref{eqn:EE5_CN} as 
\begin{align} \label{eqn:EE7_CN}
\begin{aligned}
(\partial \phi^{n+1}_r ,v_r)_{L^2}+\nu( (\phi_r)_x^{n+1/2}, (v_r)_x )_{L^2} 
= (\partial \eta^{n+1},v_r)_{L^2}+\nu( \eta_x^{n+1/2}, (v_r)_x )_{L^2} \\
+ (u^{n+1/2} u_x^{n+1/2}, v_r)_{L^2} -  (u_r^{n+1/2} (u_r)_x^{n+1/2}, v_r)_{L^2}  - \tau_n\left( v_{r} \right).
\end{aligned}
\end{align}

Then, \eqref{eqn:ritz_proj} leads to $(\eta_x, (v_r)_x)_{L^2}=0, \, \forall v_r \in X^r$; thus, the second term $(\eta_x^{n+1/2}, (v_r)_x)_{L^2}$ on the right-hand side of \eqref{eqn:EE7_CN} vanishes. We continue the error analysis by choosing $v_r:=\phi_r^{n+1/2}$, then \eqref{eqn:EE7_CN} is rewritten as
\begin{align} \label{eqn:EE10_CN}
\begin{aligned}
\frac{1}{2\Delta t} \Big(\| \phi_r^{n+1}\|^2_{L^2}-\| \phi_r^{n}\|^2_{L^2} \Big) + \nu \| (\phi_r)_x^{n+1/2}\|^2_{L^2} = (\partial \eta^{n+1}, \phi_r^{n+1/2})_{L^2} \\
+ (u^{n+1/2} u_x^{n+1/2}, \phi_r^{n+1/2} )_{L^2} -  (u_r^{n+1/2} (u_r)_x^{n+1/2}, \phi_r^{n+1/2})_{L^2}  - \tau_n( \phi_r^{n+1/2} ).
\end{aligned}
\end{align}

Now, we individually bound the terms in \eqref{eqn:EE10_CN}. During the analysis, $C$ is a generic constant that only depends on the data. By using the Cauchy-Schwarz inequality and the Young's inequality, the first term on the right-hand side of \eqref{eqn:EE10_CN}, can be bounded as
\begin{align} \label{eqn:EE11_CN}
\begin{aligned}
(\partial \eta^{n+1}, \phi_r^{n+1/2})_{L^2} &\leq \|\partial \eta^{n+1} \|_{L^2} \|\phi_r^{n+1/2}\|_{L^2} \\
& \leq \frac{1}{4} \, \|\partial \eta^{n+1} \|^2_{L^2} + \| \phi_r^{n+1/2}\|^2_{L^2}.
\end{aligned}
\end{align}

Next, we arrange the nonlinear terms in \eqref{eqn:EE10_CN} by adding and subtracting the term $(u_r^{n+1/2} u_x^{n+1/2}, \phi_r^{n+1/2})_{L^2}$. Then, we rewrite the nonlinear terms as
\begin{align} \label{eqn:EE12_CN}
    \begin{aligned}
&( u^{n+1/2} u_x^{n+1/2}, \phi_r^{n+1/2} )_{L^2} -  (u_r^{n+1/2} (u_r)_x^{n+1/2}, \phi_r^{n+1/2} )_{L^2} \\
& = ( (u-u_r)^{n+1/2} u_x^{n+1/2}, \phi_r^{n+1/2} )_{L^2} +  (u_r^{n+1/2} (u-u_r)_x^{n+1/2}, \phi_r^{n+1/2})_{L^2} \\
& = (\eta^{n+1/2} u_x^{n+1/2}, \phi_r^{n+1/2} )_{L^2} - (\phi_r^{n+1/2} u_x^{n+1/2}, \phi_r^{n+1/2} )_{L^2} \\
& +  (u_r^{n+1/2} \eta_x^{n+1/2}, \phi_r^{n+1/2})_{L^2} -(u_r^{n+1/2} (\phi_r)_x^{n+1/2}, \phi_r^{n+1/2})_{L^2}.
\end{aligned}
\end{align}

Now, we individually bound nonlinear terms in \eqref{eqn:EE12_CN}.
By using the Hölder's inequality, the regularity condition of the continuous solution in \eqref{eqn:uniform_bound_cont_soln}, and Young's inequality, we can bound the first term in \eqref{eqn:EE12_CN} as
\begin{align} \label{eqn:EE13_CN}
\begin{aligned}
(\eta^{n+1/2} u_x^{n+1/2}, \phi_r^{n+1/2} )_{L^2} & \leq \|\eta^{n+1/2}\|_{L^2} \|u_x^{n+1/2} \|_{L^\infty} \|\phi_r^{n+1/2}\|_{L^2} \\
& \leq C \|\eta^{n+1/2}\|_{L^2}  \|\phi_r^{n+1/2}\|_{L^2} \\    
& \leq C \, \|\eta^{n+1/2}\|^2_{L^2} + \|\phi_r^{n+1/2}\|^2_{L^2}.
\end{aligned}
\end{align}

For the second term in \eqref{eqn:EE12_CN}, we use the Hölder's inequality and the regularity condition of the continuous solution in \eqref{eqn:uniform_bound_cont_soln} as
\begin{align} \label{eqn:EE14_CN}
\begin{aligned}
(\phi_r^{n+1/2} u_x^{n+1/2}, \phi_r^{n+1/2} )_{L^2} 
& \leq \|\phi_r^{n+1/2}\|_{L^2} \| u_x^{n+1/2} \|_{L^\infty } \|\phi_r^{n+1/2}\|_{L^2} \\
& \leq C \|\phi_r^{n+1/2}\|^2_{L^2} .
\end{aligned}
\end{align}

For the third term in \eqref{eqn:EE12_CN}, we use the Hölder's inequality, the standard stability estimate of $u_r^n$ in $l^{\infty}(L^2)$ for CN scheme in \eqref{eqn:stability_est_rom_soln}, the Sobolev embedding, and the Young's inequality as
\begin{align} \label{eqn:EE15_CN}
\begin{aligned}
(u_r^{n+1/2} \eta_x^{n+1/2}, \phi_r^{n+1/2})_{L^2} & \leq \|u_r^{n+1/2}\|_{L^2} \| \eta_x^{n+1/2}\|_{L^2} \| \phi_r^{n+1/2}\|_{L^\infty} \\
& \leq C  \| \eta_x^{n+1/2}\|^2_{L^2} + C_1 \|(\phi_r)_x^{n+1/2}\|^2_{L^2}.
\end{aligned}
\end{align}

Finally, for the last nonlinear term in \eqref{eqn:EE12_CN}, we use the Hölder's inequality, the stability estimate of $u_r^n$ in \eqref{eqn:stability_est_rom_soln}, the Agmon's inequality (eq. after (45) in \cite{KV01}):
\begin{align*}
\|\varphi \|_{L^\infty} \leq C \| \varphi \|^{1/2}_{L^2} \, \| \varphi_x \|^{1/2}_{L^2}, \quad \forall \varphi \in H_0^1,
\end{align*}
and the Young's inequality ($p=4, q =4/3$), then we get
\begin{align} \label{eqn:EE16_CN}
\begin{aligned}
(u_r^{n+1/2} (\phi_r)_x^{n+1/2}, \phi_r^{n+1/2})_{L^2} & \leq \|u_r^{n+1/2}\|_{L^2} \| (\phi_r)_x^{n+1/2}\|_{L^2} \| \phi_r^{n+1/2}\|_{L^{\infty}} \\
& \leq C \| \phi_r^{n+1/2}\|^{1/2}_{L^2}  \| (\phi_r)_x^{n+1/2}\|^{3/2}_{L^2} \\
& \leq C  \| \phi_r^{n+1/2}\|^2_{L^2} + C_2 \| (\phi_r)_x^{n+1/2}\|^2_{L^2}.
\end{aligned}
\end{align}

To bound the consistency error \eqref{eqn:EE2_CN}, we use the Taylor's theorem, the Young's inequality, and the property $(a+b+c)^2 \leq 3 (a^2+b^2+c^2)$ for $I_{n,1}(u)$ in \eqref{eqn:EE4_CN}, then we have
\begin{align} \label{eqn:EE17_CN}
\begin{aligned}
\tau_n(\phi_r^{n+1/2}) & \leq  \Delta t^{3/2} I_{n,1}(u)  \, \|\phi_r^{n+1/2}\|_{L^2} \\
& \leq \frac{3}{4} \, \Delta t^3 I_n(u) + \| \phi_r^{n+1/2}\|^2_{L^2}.
\end{aligned}
\end{align}

Now, we choose coefficient $C_1 = C_2 = \nu/4$, and insert all bounds \eqref{eqn:EE11_CN}-\eqref{eqn:EE17_CN} into \eqref{eqn:EE10_CN}, then
\begin{align} \label{eqn:EE18_CN}
\begin{aligned}
\frac{1}{2\Delta t}\Big(  \| \phi_r^{n+1}\|^2_{L^2} - \| \phi_r^{n}\|^2_{L^2} \Big) + \frac{\nu}{2} \| (\phi_r)_x^{n+1/2}\|^2_{L^2} \leq C \Big[\|\phi_r^{n+1/2}\|^2_{L^2} + \|\partial \eta^{n+1}\|^2_{L^2} \\  + \|\eta^{n+1/2}\|^2_{L^2} +  \|\eta_x^{n+1/2}\|^2_{L^2} + \Delta t^3 I_n(u) \Big].
\end{aligned}
\end{align}

To derive a valid error bound from \eqref{eqn:EE18_CN}, there should be a relation between the time step $\Delta t$ and the viscosity coefficient $\nu$ that is explained in the following lemma.
\begin{lemma} \label{lemma:uniform_bound}
Let $\Delta t < \frac{4 \, \mathcal{C} \, \nu^3}{27}$, then it holds
\begin{align} \label{eqn:EE22_CN}
\begin{aligned}
(1-C\Delta t) \| \phi_r^{n+1}\|^2_{L^2} + \nu \Delta t \|(\phi_r)_x^{n+1/2}\|^2_{L^2} \leq  \big(1+ C \Delta t \big) \|\phi_r^n\|^2_{L^2} +  C \Delta t \Big[  \|\partial \eta^{n+1}\|^2_{L^2} \\
+  \|\eta^{n+1/2}\|^2_{L^2} + \| \eta_x^{n+1/2}\|^2_{L^2}  + \Delta t^3 I_n(u) \Big],
\end{aligned}
\end{align}
where the constant $\mathcal{C}$ above is independent of all discretization parameters but depends on the problem data.
\end{lemma}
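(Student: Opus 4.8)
The plan is to obtain \eqref{eqn:EE22_CN} directly from \eqref{eqn:EE18_CN} by elementary algebraic manipulation, the only genuine ingredient being a convexity bound that converts the midpoint quantity $\|\phi_r^{n+1/2}\|_{L^2}^2$ appearing on the right-hand side into the endpoint quantities $\|\phi_r^{n+1}\|_{L^2}^2$ and $\|\phi_r^n\|_{L^2}^2$.

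First I would clear the $\frac{1}{2\Delta t}$ factor by multiplying both sides of \eqref{eqn:EE18_CN} through by $2\Delta t$, which turns the telescoping difference into $\|\phi_r^{n+1}\|_{L^2}^2 - \|\phi_r^n\|_{L^2}^2$ and the dissipation term into $\nu\Delta t\,\|(\phi_r)_x^{n+1/2}\|_{L^2}^2$, while every term inside the bracket on the right acquires a factor $2C\Delta t$. Next, recalling from Remark~\ref{remark:consistency} that $\phi_r^{n+1/2} = \tfrac12(\phi_r^{n+1}+\phi_r^n)$, I would apply the parallelogram-type inequality $\|\phi_r^{n+1/2}\|_{L^2}^2 \le \tfrac12\big(\|\phi_r^{n+1}\|_{L^2}^2 + \|\phi_r^n\|_{L^2}^2\big)$ to the term $2C\Delta t\,\|\phi_r^{n+1/2}\|_{L^2}^2$, bounding it by $C\Delta t\,\|\phi_r^{n+1}\|_{L^2}^2 + C\Delta t\,\|\phi_r^n\|_{L^2}^2$. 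Moving the $\phi_r^{n+1}$ contribution to the left, collecting the two $\phi_r^n$ contributions on the right, and absorbing the harmless factor $2$ in front of the $\eta$- and consistency-terms into the generic constant $C$, I arrive exactly at \eqref{eqn:EE22_CN}.

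The only point that deserves care is the role of the time-step restriction $\Delta t < \tfrac{4\mathcal{C}\nu^3}{27}$ and the $\nu^3$ scaling it carries. This threshold does not enter the algebra that produces \eqref{eqn:EE22_CN} — that rearrangement is valid for any $\Delta t$ — but it guarantees that the leading coefficient $1 - C\Delta t$ on the left remains strictly positive, which is precisely what the subsequent discrete Gronwall argument needs in order to divide through by it. The exponent on $\nu$ must be tracked back to the bound \eqref{eqn:EE16_CN}: there the cubic gradient factor $\|\phi_r^{n+1/2}\|_{L^2}^{1/2}\|(\phi_r)_x^{n+1/2}\|_{L^2}^{3/2}$ was split by Young's inequality with $p=4,\ q=4/3$ so as to leave the dissipation coefficient $C_2=\nu/4$; balancing the weight forces the companion $\|\phi_r^{n+1/2}\|_{L^2}^2$ term to be multiplied by a constant proportional to $\nu^{-3}$. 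Hence the generic constant $C$ standing in front of $\|\phi_r^{n+1/2}\|_{L^2}^2$ in \eqref{eqn:EE18_CN} scales like $\nu^{-3}$, and the condition $C\Delta t < 1$ becomes $\Delta t < \tfrac{4\mathcal{C}\nu^3}{27}$ for an appropriate $\nu$-independent constant $\mathcal{C}$ collecting the remaining factors.

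I expect the main obstacle to be bookkeeping rather than analysis: one must be transparent about which $\nu$-dependent factors are hidden inside the single symbol $C$, since the excerpt declares $C$ to depend only on the data, yet the admissible time step genuinely degrades like $\nu^3$ as $\nu\to 0$. Making that dependence explicit at the step \eqref{eqn:EE16_CN} is what legitimizes the stated threshold; beyond this, the derivation of \eqref{eqn:EE22_CN} is routine.
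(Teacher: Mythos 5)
Your proposal is correct and follows essentially the same route as the paper's own proof: both convert $\|\phi_r^{n+1/2}\|_{L^2}^2$ into $\tfrac12\big(\|\phi_r^{n+1}\|_{L^2}^2+\|\phi_r^{n}\|_{L^2}^2\big)$ by convexity, multiply \eqref{eqn:EE18_CN} by $2\Delta t$, and trace the dominant $\nu^{-3}$ constant back to the Young step ($p=4$, $q=4/3$) in \eqref{eqn:EE16_CN} as the source of the restriction $\Delta t < \tfrac{4\,\mathcal{C}\,\nu^3}{27}$. Your observation that the nominally ``data-only'' constant $C$ hides a $\nu^{-3}$ factor is exactly what the paper makes explicit in \eqref{eqn:generic_const}, where that term is assumed to dominate and $\mathcal{C}=\|u_r^{n+1/2}\|_{L^2}^{-4}$.
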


\begin{proof}
First, bounding the term $\|\phi_r^{n+1/2}\|^2_{L^2}$ yields $\frac{1}{2} \Big(\|\phi_r^{n+1}\|^2_{L^2} + \|\phi_r^n\|^2_{L^2} \Big)$ in \eqref{eqn:EE18_CN}. Then, if one computes the constant coefficients in \eqref{eqn:EE11_CN}-\eqref{eqn:EE17_CN}, the generic constant $C$ in \eqref{eqn:EE18_CN} will be
\begin{align} \label{eqn:generic_const}
C:= \max \{ \frac{1}{4}, \frac{\|u^{n+1/2}_x \|^2_{L^\infty}}{4}, \frac{\|u^{n+1/2}_x \|_{L^\infty}}{2}, \frac{\mathbbm{C}^2}{\nu}, \frac{27 \, \mathbbm{C}^4}{8\nu^3}, \frac{3}{4}, \frac{3}{2} \}.
\end{align}
Then the constant $C$ does not depend on the ROM solution. Without loss of generality, in this paper, we assume that $\frac{27 \,  \mathbbm{C}^4}{8 \, \nu^3}$ dominates the terms in \eqref{eqn:generic_const}.
Finally, multiplying the resulting equation with $2\Delta t$, one obtains \eqref{eqn:EE22_CN} with the constraint $\Delta t < \frac{4 \, \mathcal{C} \, \nu^3}{27}$, where $\mathcal{C} = \mathbbm{C}^{-4}$. 
\end{proof}

\begin{remark}
One can bound the term $\| (\phi_r)^{n+1/2}_x \|^2_{L^2}$ in \eqref{eqn:EE15_CN}-\eqref{eqn:EE16_CN} by using FE inverse
estimates, as in Theorem 9.2 in \cite{rebollo2014mathematical}.

\begin{remark}
For any $\phi^{n+1}_r \in X^h$, which is the FE space that contains $X^r$, then the following FE inverse estimate holds:
\begin{align} \label{eqn:fe_inverse_estimate}
\|(\phi^{n+1}_r)_x\|_{L^2} \le C\, h^{-1} \|\phi^{n+1}_r\|_{L^2}.
\end{align}
\end{remark}
However, by using the FE inverse estimate to bound the term $\| (\phi_r)^{n+1/2}_x \|^2_{L^2}$ in \eqref{eqn:EE15_CN}-\eqref{eqn:EE16_CN}, the generic constant $C$ in \eqref{eqn:EE22_CN} depends on the space discretization, and we eventually loose $h$ convergence order, assuming uniformly regular grid.

Furthermore, the bound in \eqref{eqn:EE22_CN} obtained by using the Galerkin method leads to a restriction on the time step $\Delta t$ (see Lemma~\ref{lemma:uniform_bound}). Using stabilized methods would allow relaxing this restriction. In this paper, our concern is analyzing the error estimates optimality with respect to the different POD setting strategies, so we will consider moderate values of $\nu$.
\end{remark}

\begin{remark}
In this paper, we construct the POD basis by using the snapshots, which are the FE solutions; thus, $\eta^n$ bounds in Lemma~\ref{lemma:DQ_POD_approx_errors_Wnorm} and Theorem~\ref{thm:DQ_uniform_estimates} should be expressed in terms of the FE solution data. We next bound the projection error $\eta^n$ in terms of the finite element solution.
\end{remark}

We next bound the projection error $\eta^n$ in terms of the finite element solution.
\begin{lemma} \label{lemma:ritz_proj_u_uh}
Let $R_r u$ and $R_r u_h$ be the Ritz projection, which is defined in \eqref{eqn:ritz_proj}, of the continuous solution $u$ and FE solution $u_h$, respectively, then the following estimates hold
\begin{subequations}
\begin{align} 
\frac{1}{N+1} \sum_{n=0}^{N} \| u^n - R_r u^n \|^2_{W} & \le C \, ( h^{2l} + \Delta t^4 ) + \frac{1}{N+1} \sum_{n=0}^{N} \| u_h^n - R_r u_h^n \|^2_{W}  \label{eqn:eta_update1a} \\ 
\frac{1}{N} \sum_{n=1}^{N} \| \partial(u^n - R_r u^n) \|^2_{L^2} & \le C \, ( h^{2l+1} + \Delta t^3 ) + \frac{1}{N} \sum_{n=1}^{N} \| \partial(u_h^n - R_r u_h^n) \|^2_{L^2}, \label{eqn:eta_update1b}
\end{align}
\end{subequations}
where $h$ and $\Delta t$ are space and time discretization parameters, respectively, and $l$ is the FE interpolation order. 
\begin{proof}
We start proving with $W=H_0^1$ in \eqref{eqn:eta_update1a}. By using the definition of the Ritz projection \eqref{eqn:ritz_proj}, we have
\begin{align} \label{eqn:eta_update2}
\begin{aligned}
\| \nabla (u^n - R_r u^n) \|^2_{L^2} &= (\nabla (u^n - R_r u^n), \nabla (u^n - R_r u^n) ), \\
&= (\nabla (u^n - R_r u^n), \nabla (u^n - R_r u_h^n) ), \\
& \leq \| \nabla (u^n - R_r u^n) \|_{L^2} \, \| \nabla (u^n - R_r u_h^n) \|_{L^2}.
\end{aligned}
\end{align}
Then, by adding and subtracting the FE solution $u_h^n$ in \eqref{eqn:eta_update2} and summing the resulting inequality from $n=0$ to $N$, we get
\begin{align}
\begin{aligned}
\| \nabla (u^n - R_r u^n) \|^2_{L^2} & \leq  \| \nabla (u^n - u_h^n) \|^2_{L^2} + \| \nabla (u_h^n - R_r u_h^n) \|^2_{L^2}, \\
\frac{1}{N+1} \sum_{n=0}^{N} \| \nabla (u^n - R_r u^n) \|^2_{L^2} & \leq C \, (h^{2l} + \Delta t^4 ) + \frac{1}{N+1} \sum_{n=0}^{N} \| \nabla (u_h^n - R_r u_h^n) \|^2_{L^2}.
\end{aligned}
\end{align}
For $W=L^2$ case, we still revisit the definition of the Ritz projection \eqref{eqn:ritz_proj} and choose $v_r=R_r u$, then we have
\begin{align} 
\| \nabla R_r u \|_{L^2} \le \| \nabla u \|_{L^2} \label{eta_update3}.
\end{align}
Then, by adding and subtracting the FE solution $u_h^n$ in $\|u^n-R_ru^n\|_{L^2}$, applying the Poincaré inequality, the relation in \eqref{eta_update3}, and summing from $n=0$ to $N$ give
\begin{align} \label{eta_update3a}
\begin{aligned}
\|u^n-R_ru^n\|^2_{L^2} &\le \|u^n-u_h^n\|^2_{L^2} + \|R_r(u^n-u_h^n)\|^2_{L^2} + \|u_h^n-R_ru_h^n\|^2_{L^2},  \\
&\le \|u^n-u_h^n\|^2_{L^2} + C_p \, \| \nabla R_r(u^n-u_h^n)\|^2_{L^2} + \|u_h^n-R_ru_h^n\|^2_{L^2}, \\
\frac{1}{N+1} \sum_{n=0}^{N} \|u^n-R_ru^n\|^2_{L^2} & \le C \, (h^{2l} + \Delta t^4 ) + \frac{1}{N+1} \sum_{n=0}^{N} \|u_h^n-R_ru_h^n\|_{L^2}.
\end{aligned}
\end{align}
Finally, to bound the term $\|\partial(u^n-R_ru^n)\|$, we follow similar steps as in \eqref{eta_update3a} and get the following:
\begin{align} \label{eta_update4a}
\frac{1}{N} \sum_{n=1}^{N} \|\partial(u^n-R_ru^n)\|^2_{L^2} & \le C \, (h^{2l} + \Delta t^3 ) + \frac{1}{N}\sum_{n=1}^{N} \|\partial(u_h^n-R_ru_h^n)\|_{L^2}.
\end{align} 
\end{proof}
\end{lemma}

Now, in the following two sections, by using Lemma~\ref{lemma:uniform_bound}, we continue to derive and discuss DQ ROM errors in different norms considering $L^2(\Omega)$ and $H_0^1(\Omega)$ POD bases that will lead to different consistency error estimates.

%%%%%%%%%%%%%%%%%%%%%%%%%%%%%%%%%%%%%%%%%%%%%%%%%%%%%%%%%%%%%%%%%%%%%%%%
\subsection{Behavior of Error Bounds} \label{sec:behavior_error_bounds}
In Sections \ref{sec:uniform_max_l2_error} and \ref{sec:uniform_natural_norm_error}, we discuss the behavior of the different DQ error bounds with respect to ROM discretization. Before proceeding with these results, which are derived in the next sections, we provide definitions to distinguish whether ROM discretization error is suboptimal or optimal and to classify the types of the optimality/suboptimality behavior of the ROM discretization errors.

The behavior of a pointwise error bound depends on both the space $\mathcal{H}$ for the POD basis and the space $W$ for the pointwise error norm. The expected pointwise error bounds have the structure:
\begin{align}  \label{eqn:pointwise_error_bound}
\max_{ 0 \leq k \leq N  }\|e^{k}\|^{2}_{W}  \leq  C (\Lambda_r + \Lambda^{0}_r + \zeta(\Delta t) + \xi(h)),
\end{align}
where $\Delta t$ and $h$ are the time and spatial discretization parameters, and $\Lambda_r$, $\Lambda^{0}_r$, $\zeta(\Delta t)$, and $\xi(h)$ represents the ROM discretization error, the ROM discretization error for the initial condition, time discretization error, and spatial discretization error, respectively.

Since we are interested in the behavior of a pointwise error bound only with respect to the ROM discretization, in the following sections, we provide the definition and types of optimality and the definition of suboptimality in ROM discretization error sense.

A ROM discretization error, i.e., $\Lambda_r$ is called \textit{optimal} if it is bounded by $\Lambda_r^{*}$, $\Lambda_r^{I}$ or $\Lambda_r^{II}$ in \eqref{eqn:ROMerror_truly_optimal}-\eqref{eqn:ROMerror_optimalII} in Definition \ref{defn:optimal}. Depending on how the ROM discretization error is bounded (see Definition~\ref{defn:optimal}), the type of the optimality differs such as \textit{truly optimal}, \textit{optimal-I} discussed in \cite{iliescu2014variational} or \textit{optimal-II} discussed in \cite{koc2021optimal}.

\begin{definition} \label{defn:optimal}
Let $ X^r \subset \mathcal{H} $ be the span of the first $ r $ POD modes, and assume $ X^r $ is also contained in $ W $.  Let $ P_r : \mathcal{H} \to \mathcal{H} $ be the orthogonal POD projection onto $ X^r $, and let $ \Pi^W_r : W \to W $ be the $ W $-orthogonal projection onto $X^r $.  Also, let $d$ be the number of positive POD eigenvalues. Then, the ROM discretization error, i.e., $\Lambda_r$, is
\begin{subequations}
\begin{align}
\textbf{truly optimal:} \quad \Lambda_r &\leq C \Lambda_r^{*}, \quad \Lambda_r^{*}:=\Big( \max_{1 \leq k \leq N} \| u^k - \Pi^W_r u^k \|_W^2 \Big), \label{eqn:ROMerror_truly_optimal} \\
\textbf{optimal-I:} \quad \Lambda_r &\leq C \Lambda_r^{I}, \quad \Lambda_r^{I}:=\Big( \sum_{i = r+1}^{d} \lambda_i \| \varphi_i \|_W^2 \Big), \label{eqn:ROMerror_optimalI} \\
\textbf{optimal-II:} \quad \Lambda_r &\leq C \Lambda_r^{II}, \quad \Lambda_r^{II}:=\Big( \sum_{i = r+1}^{d} \lambda_i \| \varphi_i - \Pi^W_r \varphi_i \|_W^2 \Big), \label{eqn:ROMerror_optimalII}
\end{align}
\end{subequations}
where the constant $ C $ above should be independent of all discretization parameters but may depend on the solution data and the problem data.
\end{definition}

\begin{remark} \label{remark:suboptimality}
If the given ROM discretization error does not meet any criteria in Definition~\ref{defn:optimal} or the constant $C$ depends on the ROM discretization parameter such as $r$, then it is called suboptimal.
\end{remark}

In Sections \ref{sec:uniform_max_l2_error} and \ref{sec:uniform_natural_norm_error}, we
consider four possibilities: we used $\mathcal{H}=L^2$ or $\mathcal{H}=H_0^1$ for the POD basis, and
we use $W=L^2$ or $W=H_0^1$ for the error norm and use Definition \ref{defn:optimal} and Lemma \ref{lemma:pod_inverse_estimate}, i.e., POD inverse estimates, which was proved in Lemma 2 and Remark 2 in \cite{KV01}, to determine the pointwise error bounds behavior.

To state these inverse estimates, let $M_r \in \mathbbm{R}^{r \times r}$ with $M_{ij}:(\varphi_j, \varphi_i)_{L^2}$ be the POD mass matrix and $S_r \in \mathbbm{R}^{r \times r}$ with $S_{ij}:(\nabla \varphi_j, \nabla \varphi_i)_{L^2}$ be the POD stiffness matrix. Let $\| \cdot \|_{2}$ denote the matrix 2-norm.
\begin{lemma} \label{lemma:pod_inverse_estimate}
For all $v_r \in X^r$, the following POD inverse estimates hold:
\begin{subequations}
\begin{align}
\|\nabla v_r\|_{L^2} & \leq C^{L^2}_{inv}(r) \, \|v_r\|_{L^2}, \quad \text{for the $L^2$-POD}, \label{eqn:l2_pod_inv_est} \\
\|\nabla v_r\|_{L^2} &\leq C^{H_0^1}_{inv}(r) \, \|v_r\|_{L^2}, \quad \text{for the $H_0^1$-POD}, \label{eqn:h01_pod_inv_est}
\end{align}
\end{subequations}
where $C^{L^2}_{inv}(r)=\sqrt{\|S_r\|_2}$ and $C^{H_0^1}_{inv}(r)=\sqrt{\|M_r^{-1}\|_2}$.
\end{lemma}

%%%%%%%%%%%%%%%%%%%%%%%%%%%%%%%%%%%%%%%%%%%%%%%%%%%%%%%%%%%%%%%%%
\subsection{The $l^\infty(L^2)$ Error Estimates} \label{sec:uniform_max_l2_error}
In this section, we provide the $l^\infty(L^2)$ error estimates for the DQ ROM~\eqref{eqn:CN_POD_G_ROM}, considering both $L^2(\Omega)$ and $H_0^1(\Omega)$ POD spaces. Furthermore, we discuss the behavior of the $l^\infty(L^2)$ DQ-L2 and DQ-H01 error in 
Theorem~\ref{theorem:linfinity_L2_dq_l2_error_bound} and Theorem~\ref{theorem:linfinity_L2_dq_h01_error_bound}, respectively.

\begin{theorem} \label{theorem:linfinity_L2_dq_l2_error_bound}
Assume that $\Delta t \le \frac{2 \, \mathcal{C} \,\nu^3}{27}$, then the $l^\infty(L^2)$ DQ-L2 error is bounded by
\begin{align}   \label{eqn:linfinity_L2_dq_l2_error_bound}
\begin{aligned}
\max_{ 0 \leq k \leq N  }\|e^{k}\|^{2}_{L^2}  \leq  &C \Big[\| \phi_r^{0}\|^2_{L^2} + \sum_{i = r+1}^{d} \lambda_i^\mathrm{DQ} \Big( \| \varphi_i - R_r \varphi_i \|^2_{L^2} + \| (\varphi_i - R_r \varphi_i)_x \|^2_{L^2} \Big) \\
&+ (h^{2l} + \Delta t^3 ) + \Delta t^4 I(u) \Big],
\end{aligned}
\end{align}
where $\phi_r^0$ is the discretization error \eqref{eqn:disc_err} at $t=t_0$.
\end{theorem}

\begin{proof}
To apply the discrete Gronwall's lemma to Lemma~\ref{lemma:uniform_bound}, we first consider the following notations:
\begin{align} \label{eqn:discrete-gronwall-equivalence-notation}
\begin{aligned}
\alpha_{n}&:= \| \phi_r^{n}\|^2_{L^2} \geq 0 ,   \\
\beta_{n}&:=  C \Delta t \Big( \|\partial \eta^{n+1}\|^2_{L^2} +  \|\eta^{n+1/2}\|^2_{L^2} + \| \eta_x^{n+1/2}\|^2_{L^2} + \Delta t^3 I_n(u) \Big)  \geq 0, \\
C &= \frac{27}{4 \, \mathcal{C} \, \nu^3} \geq 0  \quad \text{from Lemma~\ref{lemma:uniform_bound}}.
\end{aligned}
\end{align}

By using notations in~\eqref{eqn:discrete-gronwall-equivalence-notation}, 
we rewrite \eqref{eqn:EE22_CN} as follows:
\begin{align} \label{eqn:EE23_CN}
(1-C \Delta t) \alpha_{n+1}  \leq (1+C \Delta t) \alpha_n + \beta_n \, \quad \forall n=0,...,N-1.
\end{align}

By using the discrete Gronwall's lemma (see Lemma 10.4 in~\cite{rebollo2014mathematical}) in~\eqref{eqn:EE23_CN}, and if the small time step assumption, i.e., $\Delta t \le 0.5 \, C^{-1} = \frac{2 \, \mathcal{C} \, \nu^3}{27}$, is guaranteed, then the
the following inequality holds:  
\begin{align} \label{eqn:EE24_CN}
\begin{aligned}
 \max_{k=0,..,N} \| \phi_r^{k}\|^2_{L^2} & \leq e^{4CT}  \|\phi_r^0\|^2_{L^2}    \\
& + 2Ce^{4CT} \sum_{n=0}^{N-1} \Delta t \Big( \|\partial \eta^{n+1}\|^2_{L^2} + \|\eta^{n+1}\|^2_{L^2} +  \|\eta_x^{n+1}\|^2_{L^2} + \Delta t^3 I_n(u) 
\Big).
\end{aligned}
\end{align}
Now, using triangle inequality, from \eqref{eqn:EE24_CN} we get
\begin{align} \label{eqn:EE26_CN}
\begin{aligned}
 \max_{0\leq k \leq N} \|e^k\|^2_{L^2} &\leq \max_{0\leq k \leq N} \|\eta^{k}\|^2_{L^2} + C e^{4C T} \Big[ \|\phi_r^0\|^2_{L^2} + \Delta t^4 I(u) \\
& + \sum_{n=0}^{N-1} \Delta t \Big( \|\partial \eta^{n+1}\|^2_{L^2} + \|\eta^{n+1}\|^2_{L^2} +  \|\eta_x^{n+1}\|^2_{L^2} \Big) \Big].
\end{aligned}   
\end{align}

Using $ (2N+1)\Delta t = 2T + \Delta t \leq 3T $ relation and updating the generic constant $C$ in \eqref{eqn:EE26_CN} give
\begin{align} \label{eqn:EE27_CN}
\begin{aligned}
 \max_{0\leq k \leq N} \|e^k\|^2_{L^2}  \leq  C & \Big[ \max_{0\leq k \leq N} \|\eta^{k}\|^2_{L^2} + \|\phi_r^0\|^2_{L^2} + \Delta t^4 I(u)  \\
&+ \frac{1}{2N+1} \sum_{n=0}^{N-1} \Big( \|\partial \eta^{n+1}\|^2_{L^2} + \|\eta^{n+1}\|^2_{L^2} +  \|\eta_x^{n+1}\|^2_{L^2}
\Big)  \Big].
\end{aligned}
\end{align}

Now, use Lemma~\ref{lemma:ritz_proj_u_uh}, \eqref{eqn:DQ_POD_proj_error_W_norm2} in Lemma~\ref{lemma:DQ_POD_approx_errors_Wnorm} with $W=L^2$ and $H_0^1$, and \eqref{eqn:DQ_POD_uniform_bound3} in Theorem~\ref{thm:DQ_uniform_estimates} with $W=L^2$. This ends the proof.
\end{proof}

Now, we discuss the behavior of the $l^{\infty}(L^2)$ DQ-L2 error \eqref{eqn:linfinity_L2_dq_l2_error_bound}. At a first glance, \eqref{eqn:linfinity_L2_dq_l2_error_bound} does not meet any optimality type of criteria in Definition~\ref{defn:optimal} since the DQ-L2 error is built with the norm error choice $W=L^2$ and the right-hand side of \eqref{eqn:linfinity_L2_dq_l2_error_bound} is not only purely bounded with $W=L^2$ norm error. Applying \eqref{eqn:l2_pod_inv_est} in Lemma \ref{lemma:pod_inverse_estimate} to the second term, which is in $\| (\varphi_i - R_r \varphi_i)_x \|^2_{L^2}= \| \varphi_i - R_r \varphi_i \|^2_{H_0^1}$ in \eqref{eqn:linfinity_L2_dq_l2_error_bound} yields a coefficient $C^{L^2}_{inv}:=\sqrt{\|S_r\|_2}$ which depends on the ROM dimension $r$. However, when we numerically investigate the behavior of $\sqrt{\|S_r\|_2}$, we observe that it is almost constant (see the top-right plot in Figure~\ref{fig:dq_l2_convergence}). Thus, the $l^\infty(L^2)$ DQ-L2 error bound does not meet the suboptimality criteria in Remark~\ref{remark:suboptimality}; on the contrary, it holds the optimality-II type, i.e., \eqref{eqn:ROMerror_optimalII}.

Before presenting the $l^{\infty}(L^2)$ DQ-H01 error, we provide some bounds related to the Ritz projection when considering the $H_0^1(\Omega)$ POD space framework, which will be used in Theorem~\ref{theorem:linfinity_L2_dq_h01_error_bound}.

\begin{lemma} \label{lemma:DQ_ritz_proj_bound} (Bounds for Ritz Projection)
The Ritz projection satisfies the following bounds if $H_0^1(\Omega)$ POD basis is used, see Section 4.2 in \cite{locke2021new}:
\begin{eqnarray} \label{eqn:ritz_proj_bound}
\begin{cases}
\|\varphi_i - R_r \varphi_i\|_{L^2} =  \|\varphi_i\|_{L^2} \\
\|(\varphi_i - R_r \varphi_i)_x \|_{L^2} = 1,  \quad \forall i=r+1,...,d. 
\end{cases}
\end{eqnarray}
\end{lemma}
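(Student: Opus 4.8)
The plan is to exploit the fact that, in the $H_0^1(\Omega)$ POD framework, the Ritz projection defined in \eqref{eqn:ritz_proj} coincides exactly with the orthogonal projection induced by the POD inner product, so that it annihilates every tail mode $\varphi_i$ with $i > r$.

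First I would record that choosing $\mathcal{H} = H_0^1$ in the minimization problem \eqref{eqn:min_problem_general} forces the POD modes to be orthonormal with respect to the $H_0^1$ inner product, i.e.
\[
(\varphi_i, \varphi_j)_{H_0^1} = ((\varphi_i)_x, (\varphi_j)_x)_{L^2} = \delta_{ij}, \qquad 1 \le i, j \le d.
\]
In particular, taking $i = j$ gives $\|(\varphi_i)_x\|_{L^2} = 1$ for every $i$, which already supplies the value on the right-hand side of the second claimed identity.

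Next I would note that the defining relation \eqref{eqn:ritz_proj} says precisely that $R_r u$ is the $((\cdot)_x, (\cdot)_x)_{L^2}$-orthogonal projection of $u$ onto $X^r$; since $\{\varphi_j\}_{j=1}^r$ is an $H_0^1$-orthonormal basis of $X^r$, this projection admits the explicit expansion
\[
R_r \varphi_i = \sum_{j=1}^r ((\varphi_i)_x, (\varphi_j)_x)_{L^2}\, \varphi_j = \sum_{j=1}^r \delta_{ij}\, \varphi_j.
\]
For $i > r$ every index $j \le r$ satisfies $j \ne i$, so each Kronecker symbol vanishes and $R_r \varphi_i = 0$. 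Hence $\varphi_i - R_r \varphi_i = \varphi_i$ for all $i = r+1, \ldots, d$, and the two claimed equalities follow at once by taking the $L^2$ norm of $\varphi_i$ and of $(\varphi_i)_x$, respectively.

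There is essentially no hard step here: the whole content is the recognition that in the $H_0^1$ setting the Ritz projection is nothing but the POD-orthogonal projection, after which orthonormality of the modes makes the tail modes automatically orthogonal to $X^r$. The only point requiring care is to keep track of which inner product the basis is orthonormal in -- it is $H_0^1$, not $L^2$ -- so that $\|\varphi_i\|_{L^2}$ in the first identity genuinely cannot be reduced to $1$, whereas $\|(\varphi_i)_x\|_{L^2}$ in the second identity does equal $1$.
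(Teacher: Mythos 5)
Your proof is correct and rests on the same two ingredients as the paper's own argument: the Ritz property, which forces the projection coefficients to satisfy $(R_r\varphi_i,\varphi_j)_{H_0^1}=(\varphi_i,\varphi_j)_{H_0^1}$, and the $H_0^1$-orthonormality of the POD modes. The only difference is presentational: you collapse the projection to $R_r\varphi_i=0$ at the outset, so both identities are immediate, whereas the paper carries the same cancellation through a term-by-term expansion of $\|\varphi_i-R_r\varphi_i\|_W^2$ for $W\in\{L^2,H_0^1\}$ and reaches the identical conclusion.
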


\begin{proof}
One can expand the term $R_r \varphi_i \in X^r $ by considering the first $r$ POD modes as
\begin{align} \label{eqn:ritz_proj_bound3}
 R_r \varphi_i := \sum_{j=1}^{r} ( R_r \varphi_i, \varphi_j )_{H_0^1} \varphi_j 
\end{align}

Let $W$ denote either $L^2$ or $H_0^1$. Then, by using \eqref{eqn:ritz_proj_bound3}, we get the following:
\begin{align} \label{eqn:ritz_proj_bound4}
\begin{aligned}
\|\varphi_i - R_r \varphi_i\|_{W}^2 & = \Big( \varphi_i - \sum_{j=1}^{r} \big(R_r \varphi_i, \varphi_j \big)_{H_0^1} \varphi_j , \varphi_i - \sum_{k=1}^{r} \big( R_r \varphi_i, \varphi_k \big)_{H_0^1} \varphi_k \Big)_{W} \\
& = \| \varphi_i \|^2_{W} - \sum_{j=1}^{r}  \big(R_r \varphi_i, \varphi_j \big)_{H_0^1} \big(\varphi_j, \varphi_i \big)_{W} - \sum_{k=1}^{r}  \big(R_r \varphi_i, \varphi_k \big)_{H_0^1} \big(\varphi_k, \varphi_i \big)_{W} \\
& + \sum_{j,k=1}^{r}  \big(R_r \varphi_i, \varphi_j \big)_{H_0^1} (R_r \varphi_i, \varphi_k )_{H_0^1} \big(\varphi_j, \varphi_k \big)_{W} \\
& = \| \varphi_i \|^2_{W} - \sum_{j=1}^{r}  \big( \varphi_i, \varphi_j \big)_{H_0^1} \big(\varphi_j, \varphi_i \big)_{W} - \sum_{k=1}^{r}  \big( \varphi_i, \varphi_k \big)_{H_0^1} \big(\varphi_k, \varphi_i \big)_{W} \\
& + \sum_{j,k=1}^{r}  \big( \varphi_i, \varphi_j \big)_{H_0^1} \big( \varphi_i, \varphi_k \big)_{H_0^1} \big(\varphi_j, \varphi_k \big)_{W} \\
& = \| \varphi_i \|^2_{W}
\end{aligned}
\end{align}
where $( \varphi_i, \varphi_k )_{H_0^1} = 0 , \quad \forall i=r+1,..,d, \quad j,k=1,..,r$. If $W = L^2$, then $\| \varphi_i \|^2_{W} = \| \varphi_i \|^2_{L^2} $ will keep the same; otherwise, $W = H_0^1$, then $\| \varphi_i \|^2_{H_0^1} = 1$. This ends the proof.
\end{proof}

\begin{theorem} \label{theorem:linfinity_L2_dq_h01_error_bound}
Assume that $\Delta t \le \frac{2 \, \mathcal{C} \,\nu^3}{27}$, then the $l^\infty(L^2)$ DQ-H01 error is bounded by
\begin{align} \label{eqn:linfinity_L2_dq_h01_error_bound}
\begin{aligned}
\max_{ 0 \leq k \leq N  }\|e^{k}\|^{2}_{L^2}  \leq  C \Big[ \| \phi_r^{0}\|^2_{L^2} + &  \sum_{i = r+1}^{d} 
\lambda_i^\mathrm{DQ} \big( 1 + \| \varphi_i \|^2_{L^2}  \big) + (h^{2l} + \Delta t^3 ) + \Delta t^4 I(u) \Big].
\\
\end{aligned}
\end{align}
\end{theorem}

\begin{proof}
The derivation of the $l^{\infty}(L^2)$ DQ-H01 error bound is exactly the same as the error bound for the $l^{\infty}(L^2)$ DQ-L2 error bound \eqref{eqn:linfinity_L2_dq_l2_error_bound}. Now, we consider the $H_0^1(\Omega)$ POD basis; thus, we need to bound the right-hand side of \eqref{eqn:linfinity_L2_dq_l2_error_bound} by using the properties in \eqref{eqn:ritz_proj_bound} in Lemma~\ref{lemma:DQ_ritz_proj_bound}. This ends the proof.
\end{proof}

Now, we discuss the behavior of the $l^{\infty}(L^2)$ DQ-H01 error bound \eqref{eqn:linfinity_L2_dq_h01_error_bound}. Based on optimality types in Definition~\ref{defn:optimal}, one can conclude that \eqref{eqn:linfinity_L2_dq_h01_error_bound} is optimal-I if there is no additive factor 1, which equals to $\|\varphi_i\|_{H_0^1}$ for $H_0^1$ POD. Applying \eqref{eqn:h01_pod_inv_est} in Lemma \ref{lemma:pod_inverse_estimate} to $\|\varphi_i\|_{H_0^1}$ in \eqref{eqn:linfinity_L2_dq_h01_error_bound} yields
\begin{align} \label{eqn:linfinity_L2_dq_h01_error_bound_pod_inv_est}
\begin{aligned}
1+\|\varphi_i\|^2_{L^2}& =\|\varphi_i\|^2_{H_0^1}+\|\varphi_i\|^2_{L^2}, \\
                       & \leq C_r^2~\|\varphi_i\|^2_{L^2},
\end{aligned}
\end{align}
where $C_r=\max\{1,C^{H_0^1}_{inv}\}$. We numerically investigate the behavior of $C^{H_0^1}_{inv}=\sqrt{\|M_r^{-1}\|_2}$ and find that it is almost constant (see the top-right plot in Figure~\ref{fig:dq_h01_convergence}). Thus, the $l^\infty(L^2)$ DQ-H01 error bound is optimal-I, i.e., \eqref{eqn:ROMerror_optimalI}.

%%%%%%%%%%%%%%%%%%%%%%%%%%%%%%%%%%%%%%%%%%%%%%%%%%%%%%%%%%%%%%%%%%%%%%%%
\subsection{The Natural-Norm Error Estimates} \label{sec:uniform_natural_norm_error}
In this section, we provide the natural-norm, i.e., ($l^\infty(L^2) \cap l^2(H_0^1)$) error estimates for the DQ ROM~\eqref{eqn:CN_POD_G_ROM} considering both $L^2(\Omega)$ and $H_0^1(\Omega)$ POD spaces. Furthermore, we discuss the behavior of the natural-norm DQ-L2 and DQ-H01 error in 
Theorem~\ref{theorem:natural_norm_dq_l2_error_bound} and Theorem~\ref{theorem:natural_norm_dq_h01_error_bound}, respectively.

\begin{theorem} \label{theorem:natural_norm_dq_l2_error_bound}
The natural-norm DQ-L2 error is bounded by 
\begin{align} \label{eqn:natural_norm_dq_l2_error_bound}
\begin{aligned}
\max_{0\leq k \leq N} \|e^{k}\|^{2}_{L^2} + \nu \Delta t \sum_{n=0}^{N-1} \| e_x^{n+1/2} \|^2_{L^2} \leq C & \Big[\| \phi_r^{0}\|^2_{L^2} +   \sum_{i = r+1}^{d} \lambda_i^\mathrm{DQ} \Big( \| \varphi_i - R_r \varphi_i \|^2_{L^2} \\
&+ \| (\varphi_i - R_r \varphi_i)_x \|^2_{L^2} \Big) + (h^{2l} + \Delta t^3 )  + \Delta t^4 I(u) \Big].
\end{aligned}
\end{align}
\end{theorem}

\begin{proof}
We start derivation with rearranging \eqref{eqn:EE18_CN} as follows:
\begin{align} \label{eqn:EE28a_CN}
\begin{aligned}
\big(  \| \phi_r^{n+1}\|^2_{L^2} &- \| \phi_r^{n}\|^2_{L^2} \big) + \nu \Delta t \| (\phi_r)_x^{n+1/2}\|^2_{L^2} \leq C \Delta t \Big[\|\phi_r^{n+1}\|^2_{L^2} + \|\phi_r^{n}\|^2_{L^2}  \\ 
& + \|\partial \eta^{n+1}\|^2_{L^2} + \|\eta^{n+1/2}\|^2_{L^2} +  \|\eta_x^{n+1/2}\|^2_{L^2} + \Delta t^3 I_n(u) \Big],
\end{aligned}
\end{align}
where $C=\frac{27}{4 \, \mathcal{C} \, \nu^3}$. The first two terms on the right-hand side of \eqref{eqn:EE28a_CN} are in ROM space; thus, they are bounded thanks to the standard stability estimate for the ROM solution. 
Then, summing from $n=0$ to $n=N-1$ and using triangle inequality give 
\begin{align} \label{eqn:EE28_CN}
\begin{aligned}
\max_{0\leq k \leq N}  \|e^k\|^2_{L^2} + \nu \Delta t \sum_{n=0}^{N-1} \|e_x^{n+1/2} \|^2_{L^2}  \leq C \Big[ \max_{0\leq k \leq N} \|\eta^{k}\|^2_{L^2} +  \|\phi_r^0\|^2_{L^2}  \\
 + \Delta t \sum_{n=0}^{N-1} \Big( \|\partial \eta^{n+1}\|^2_{L^2} + \|\eta^{n+1/2}\|^2_{L^2} + \|\eta_x^{n+1/2}\|^2_{L^2}
\Big)  + \Delta t^4 I(u) \Big],
\end{aligned}   
\end{align}
Use $ (2N+1)\Delta t = 2T + \Delta t \leq
3T $ relation and update the generic constant $C$ in \eqref{eqn:EE28_CN}, then apply Lemma~\ref{lemma:ritz_proj_u_uh}, \eqref{eqn:DQ_POD_proj_error_W_norm2} in Lemma~\ref{lemma:DQ_POD_approx_errors_Wnorm} with $W=L^2$ and $H_0^1$, and \eqref{eqn:DQ_POD_uniform_bound3} in Theorem~\ref{thm:DQ_uniform_estimates} with $W=L^2$. This ends the proof.
\end{proof}

Now, we discuss the behavior of the natural-norm DQ-L2 error bound \eqref{eqn:natural_norm_dq_l2_error_bound}. If $l^{\infty}(L^2)$ and $l^{2}(H_0^1)$ parts of the natural-norm in \eqref{eqn:natural_norm_dq_l2_error_bound} are controlled by $\| \varphi_i - R_r \varphi_i \|^2_{L^2}$ and $\| (\varphi_i - R_r \varphi_i)_x \|^2_{L^2}$, respectively, then based on Definition~\ref{defn:optimal}, one can conclude that the natural-norm DQ-L2 \eqref{eqn:natural_norm_dq_l2_error_bound} is optimal-II.

\begin{theorem} \label{theorem:natural_norm_dq_h01_error_bound}
The natural-norm DQ-H01 error is bounded by 
\begin{align}  \label{eqn:natural_norm_dq_h01_error_bound}
\begin{aligned}
\max_{0\leq k \leq N}  \|e^k\|^2_{L^2}  + \nu \Delta t \sum_{n=0}^{N-1} \|e_x^{n+1/2}\|^2_{L^2} & \leq C \Big[\| \phi_r^{0}\|^2_{L^2} + \sum_{i = r+1}^{d} \lambda_i^\mathrm{DQ} \big( 1+ \| \varphi_i  \|^2_{L^2} \big) \\
& + (h^{2l} + \Delta t^3 ) + \Delta t^4 I(u) \Big].
\end{aligned}
\end{align}
\end{theorem}

\begin{proof}
The derivation of the natural-norm DQ-H01 error bound is exactly the same as the error bound for the natural-norm DQ-L2 error bound \eqref{eqn:natural_norm_dq_l2_error_bound}. Now, we consider the $H_0^1(\Omega)$ POD basis; thus, we need to bound the right-hand side of \eqref{eqn:natural_norm_dq_l2_error_bound} by using the properties in \eqref{eqn:ritz_proj_bound} in Lemma~\ref{lemma:DQ_ritz_proj_bound}. This ends the proof.
\end{proof}

Now, we discuss the behavior of the natural-norm DQ-H01 error bound \eqref{eqn:natural_norm_dq_h01_error_bound}. If $l^{\infty}(L^2)$ and $l^{2}(H_0^1)$ parts of the natural-norm in \eqref{eqn:natural_norm_dq_h01_error_bound} are controlled by $\|\varphi_i \|^2_{L^2}$ and $1=\| (\varphi_i)_x\|^2_{L^2}$, respectively, then based on Definition~\ref{defn:optimal}, one can conclude that the natural-norm DQ-H01 \eqref{eqn:natural_norm_dq_h01_error_bound} is optimal-I.

\begin{remark} \label{remark:nodq_max_natural_norm_error_l2_h01_pod}
We briefly provide the $l^{\infty}(L^2)$ and natural-norm noDQ-L2 and noDQ-H01 error estimates. To obtain the $l^{\infty}(L^2)$ and natural-norm noDQ error bounds, one can proceed similarly to the above proof using the $ L^2 $ projection instead of the Ritz projection. The $l^{\infty}(L^2)$ and natural-norm error bounds are the same. Specifically, the $l^{\infty}(L^2)$ and natural-norm noDQ-L2 error bound is provided in  \eqref{eqn:nodq_l2_error_bound}, and the noDQ-H01 error bound is provided in \eqref{eqn:nodq_h01_error_bound}. 
\begin{subequations} 
\begin{align}
\mathcal{E}  & \leq C \Big[\| \phi_r^{0}\|^2_{L^2} +  \sum_{i = r+1}^{d} \lambda_i^\mathrm{noDQ} \Big( \| \varphi_i \|^2_{L^2} + \| (\varphi_i)_x \|^2_{L^2} \Big)  + (h^{2l} + \Delta t^3 ) + \Delta t^4 I(u) \Big], \label{eqn:nodq_l2_error_bound} \\
\mathcal{E}  & \leq C \Big[\| \phi_r^{0}\|^2_{L^2} +  \sum_{i = r+1}^{d} \lambda_i^\mathrm{noDQ} \big( 1 + \| \varphi_i \|^2_{L^2} \big)  + (h^{2l} + \Delta t^3 ) + \Delta t^4 I(u) \Big]. \label{eqn:nodq_h01_error_bound}
\end{align}
\end{subequations}

Now, we discuss the behavior of the noDQ-L2 \eqref{eqn:nodq_l2_error_bound} and noDQ-H01 \eqref{eqn:nodq_h01_error_bound} error bounds by applying the same process as we did in the DQ case. 

For $l^{\infty}(L^2)$ noDQ-L2 error bound, applying \eqref{eqn:l2_pod_inv_est} in Lemma \ref{lemma:pod_inverse_estimate} to the second term, which is in $\| (\varphi_i)_x \|^2_{L^2}$, in \eqref{eqn:nodq_l2_error_bound} yields a coefficient $C^{L^2}_{inv}:=\sqrt{\|S_r\|_2}$ which depends on the ROM dimension $r$. We numerically investigate the behavior of $\sqrt{\|S_r\|_2}$, and observe that it increases as r increases (see the top-right plot in Figure~\ref{fig:nodq_l2_convergence}). Since the $l^\infty(L^2)$ noDQ-L2 error bound does meet the suboptimality criteria in Remark~\ref{remark:suboptimality}, $l^\infty(L^2)$ noDQ-L2 error bound \eqref{eqn:nodq_l2_error_bound} is suboptimal.

For natural-norm noDQ-L2 error bound, if $l^{\infty}(L^2)$ and $l^{2}(H_0^1)$ parts of the natural-norm in \eqref{eqn:nodq_l2_error_bound} are controlled by $\| \varphi_i \|^2_{L^2}$ and $\| (\varphi_i)_x \|^2_{L^2}$, respectively, then based on Definition~\ref{defn:optimal}, one can conclude that the natural-norm noDQ-L2 \eqref{eqn:nodq_l2_error_bound} is optimal-I.

For $l^{\infty}(L^2)$ noDQ-H01 error bound, applying \eqref{eqn:h01_pod_inv_est} in Lemma \ref{lemma:pod_inverse_estimate} to $\|\varphi_i\|_{H_0^1}$ in \eqref{eqn:nodq_h01_error_bound} yields the coefficient $C^{H_0^1}_{inv}=\sqrt{\|M_r^{-1}\|_2}$. We numerically investigate the behavior of $C^{H_0^1}_{inv}$ and observe that it increases as r increases (see the top-right plot in Figure~\ref{fig:nodq_h01_convergence}). Since, the $l^\infty(L^2)$ noDQ-H01 error bound does meet the suboptimality criteria in Remark~\ref{remark:suboptimality}, $l^\infty(L^2)$ noDQ-H01 error bound \eqref{eqn:nodq_h01_error_bound} is suboptimal.

For natural-norm noDQ-H01 error bound, if $l^{\infty}(L^2)$ and $l^{2}(H_0^1)$ parts of the natural-norm in \eqref{eqn:nodq_h01_error_bound} are controlled by $\| \varphi_i \|^2_{L^2}$ and $1=\| (\varphi_i)_x \|^2_{L^2}$, respectively, then based on Definition~\ref{defn:optimal}, one can conclude that the natural-norm noDQ-H01 \eqref{eqn:nodq_l2_error_bound} is optimal-I.
\end{remark}

The behavior of the error bounds, which are theoretically derived in Sections \ref{sec:uniform_max_l2_error} and \ref{sec:uniform_natural_norm_error}, are summarized in Table~\ref{table:theoretical_convergence_rate}. Considering $L^2$ and $H_0^1$ POD basis, error norms, and noDQ/DQ frameworks, we observe that for the DQ errors bounds with all cases and the natural norm noDQ-L2 and noDQ-H01 are optimal; whereas the $l^{\infty}(L^2)$ noDQ-L2 and noDQ-H01 error bounds are suboptimal.

\begin{table}[h!] 
\centering
\begin{tabular}{| c| c| c| } 
\hline
& $l^{\infty}(L^2)$ & \text{natural norm}  \\ 
\hline 
 & suboptimal & optimal-I \\ 
noDQ-L2 & Section~\ref{sec:uniform_natural_norm_error} & Section~\ref{sec:uniform_natural_norm_error} \\
& \eqref{eqn:nodq_l2_error_bound} in Remark~\ref{remark:nodq_max_natural_norm_error_l2_h01_pod} & \eqref{eqn:nodq_l2_error_bound} in Remark~\ref{remark:nodq_max_natural_norm_error_l2_h01_pod} \\ 
\hline 
 & suboptimal & optimal-I \\ 
noDQ-H01 & Section~\ref{sec:uniform_natural_norm_error} & Section~\ref{sec:uniform_natural_norm_error} \\
& \eqref{eqn:nodq_h01_error_bound} in Remark~\ref{remark:nodq_max_natural_norm_error_l2_h01_pod} & 
\eqref{eqn:nodq_h01_error_bound} in Remark~\ref{remark:nodq_max_natural_norm_error_l2_h01_pod}
\\ 
\hline 
 & optimal-II & optimal-II \\ 
DQ-L2 & Section~\ref{sec:uniform_max_l2_error} & Section~\ref{sec:uniform_natural_norm_error} \\
& \eqref{eqn:linfinity_L2_dq_l2_error_bound} in Theorem~\ref{theorem:linfinity_L2_dq_l2_error_bound} &   \eqref{eqn:natural_norm_dq_l2_error_bound} in Theorem~\ref{theorem:natural_norm_dq_l2_error_bound} 
\\ 
\hline 
 & optimal-I & optimal-I \\ 
DQ-H01 & Section~\ref{sec:uniform_max_l2_error} & Section~\ref{sec:uniform_natural_norm_error} \\
& \eqref{eqn:linfinity_L2_dq_h01_error_bound} in 
Theorem~\ref{theorem:linfinity_L2_dq_h01_error_bound} & \eqref{eqn:natural_norm_dq_h01_error_bound} in Theorem~\ref{theorem:natural_norm_dq_h01_error_bound} \\
& \eqref{eqn:ROMerror_truly_optimal} in Definition~\ref{defn:optimal} & 
\eqref{eqn:ROMerror_truly_optimal} in Definition~\ref{defn:optimal}
\\ 
\hline 
\end{tabular}
\caption{Theoretical results: The behavior of the noDQ and DQ ROMs with $L^2(\Omega)$ and $H_0^1(\Omega)$ POD basis and $l^{\infty}(L^2)$ and natural norm errors.}  \label{table:theoretical_convergence_rate}
\end{table}

%%%%%%%%%%%%%%%%%%%%%%%%%%%%%%%%%%%%%%%%%%%%%%%%%%%%%%%%%%%%%%%%%%%%%%%%
\section{Numerical Results} \label{sec:numerical_results}
In this section, we provide numerical results for the Burgers equation~\eqref{eqn:burgers} with the following initial condition
\begin{equation} \label{eqn:step_IC}
u_0(x)=\begin{cases}
\displaystyle~ 1, & x \in (0,1/2], \\
~\displaystyle 0, & x \in (1/2,1]. 
\end{cases}
\end{equation}
This condition generates a smooth solution at any $t>0$, with an infinite time gradient at $t=0$. This allows us to test the role of the difference quotients. To obtain full order model (FOM) data (the FOM simulation is provided in Figure~\ref{fig:fom_simulation}), we solve \eqref{eqn:burgers} by using the finite element method considering $\nu=10^{-2}$, $f = 0$, mesh size $h=1/512$, piecewise linear finite elements for the spatial discretization, and Crank-Nicolson time discretization. A small time step $\Delta t = 10^{-3}$ is taken to obtain the errors due to the POD discretization.

\begin{figure}[h!] 
\begin{center}
\includegraphics[width=0.45\textwidth,height=0.3\textwidth]{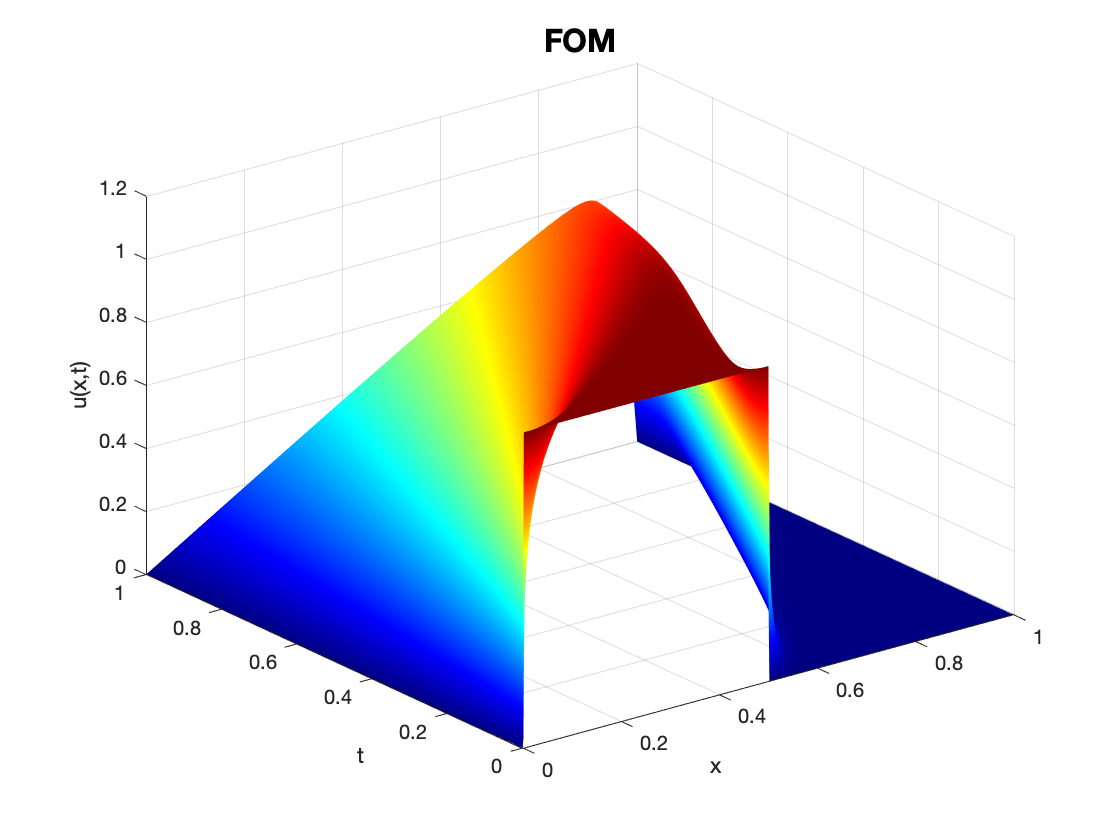}
\caption{FOM simulation for Burgers equation~\eqref{eqn:burgers} with initial condition~\eqref{eqn:step_IC}.}
\label{fig:fom_simulation}
\end{center} 
\end{figure}

For all test cases, we compute two different absolute norm ROM errors:
\begin{subequations}
\begin{align} \label{eqn:max_L2_error}
\mathcal{E}_{l^\infty(L^2)} &= \max_{0 \leq k \leq N} \|e^k\|^2_{L^2}, \\
\mathcal{E}_{l^\infty(L^2) \cap l^2(H_0^1)} &= \max_{0 \leq k \leq N} \|e^k\|^2_{L^2} + \nu \Delta t \sum_{n=0}^{N-1}\| e_x^{n+1/2}\|^2_{L^2},  \label{eqn:natural_norm_error}
\end{align}
\end{subequations}
being $e=u_h-u_r$.

\begin{remark}
Since the ROM initial condition is chosen as $u_r^0:=R_ru^0$, the discretization error \eqref{eqn:linfinity_L2_dq_l2_error_bound} at $t=t_0$, i.e., $\phi_r^0=0$ in all the error bounds derivation in numerical results.
\end{remark}

%%%%%%%%%%%%%%%%%%%%%%%%%%%%%%%%%%%%%%%%%%%%%%%%%%%%%%%%%%%%%%%%%%%%%%%%
\subsection{noDQ ROM Results} \label{sec:nodq_rom_numerics}
In this section, we numerically discuss the behavior of the $l^{\infty}(L^2)$ and natural-norm noDQ-L2 and noDQ-H01 error bounds. Based on the noDQ error estimates in \eqref{eqn:nodq_l2_error_bound}-\eqref{eqn:nodq_h01_error_bound} in Remark~\ref{remark:nodq_max_natural_norm_error_l2_h01_pod}, we define the following RHS terms:
\begin{subequations}
\begin{align}
\text{noDQ-RHS1} &= \sum_{i = r+1}^{d} \lambda_i^\mathrm{noDQ} \Big( \| \varphi_i \|^2_{L^2} + \| (\varphi_i)_x \|^2_{L^2} \Big) + \Delta t^2  + \Delta t^4 I(u), \label{eqn:nodq_rhs1} \\
\text{noDQ-RHS2} &= \sum_{i = r+1}^{d} \lambda_i^\mathrm{noDQ} (1+ \| \varphi_i\|^2_{L^2})  + \Delta t^2 + \Delta t^4 I(u). \label{eqn:nodq_rhs2}
\end{align}
\end{subequations}

\begin{figure}[h!] 
\begin{center}
\includegraphics[width=0.45\textwidth,height=0.3\textwidth]{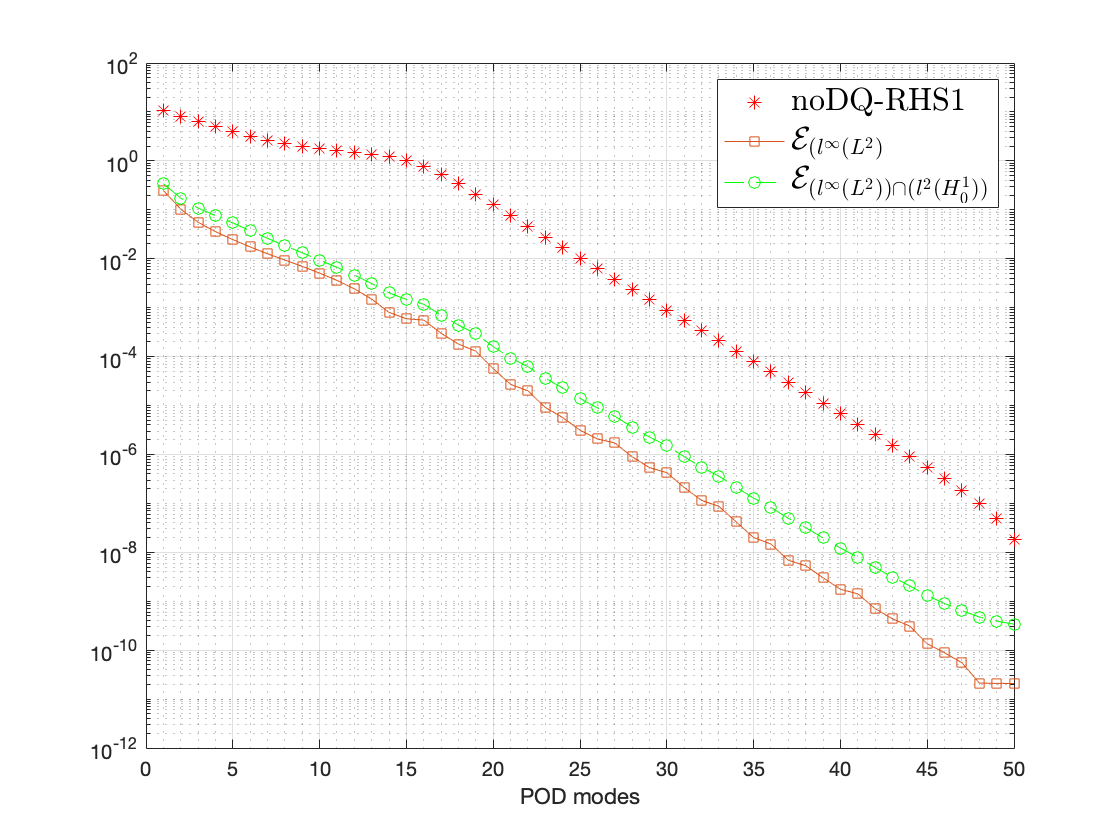}
\includegraphics[width=0.45\textwidth,height=0.3\textwidth]{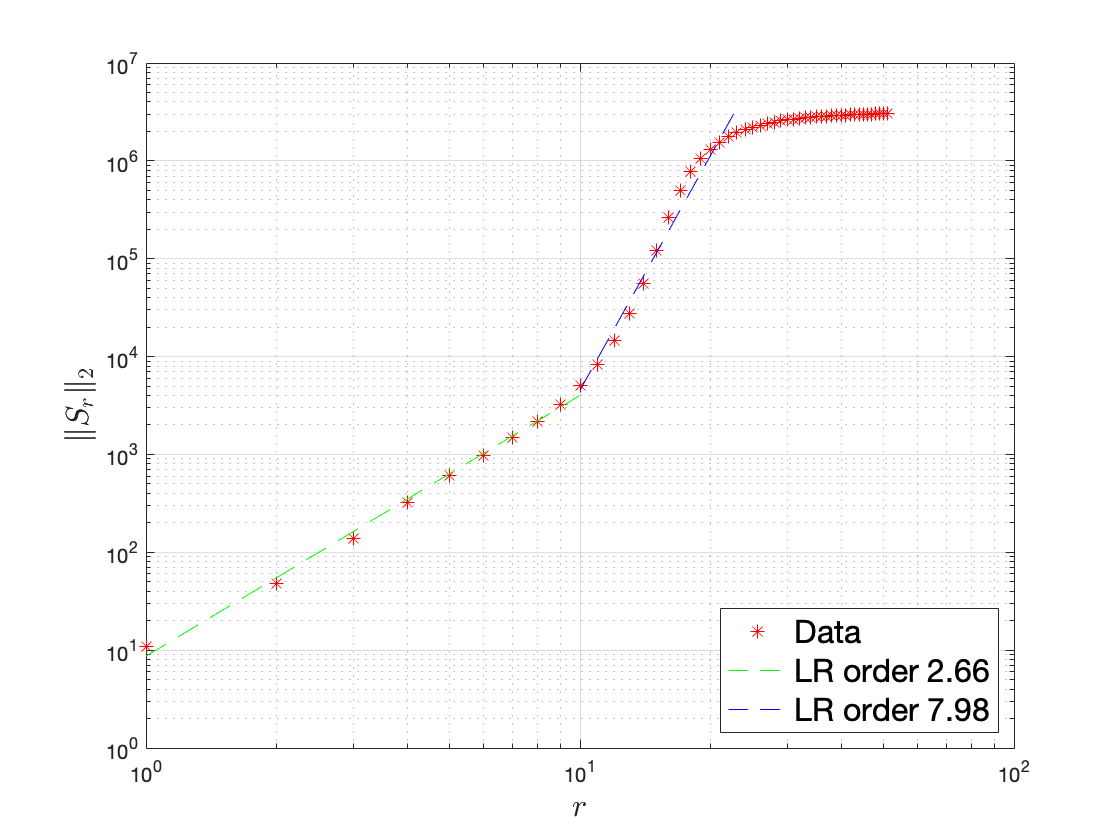}
\includegraphics[width=0.45\textwidth,height=0.3\textwidth]{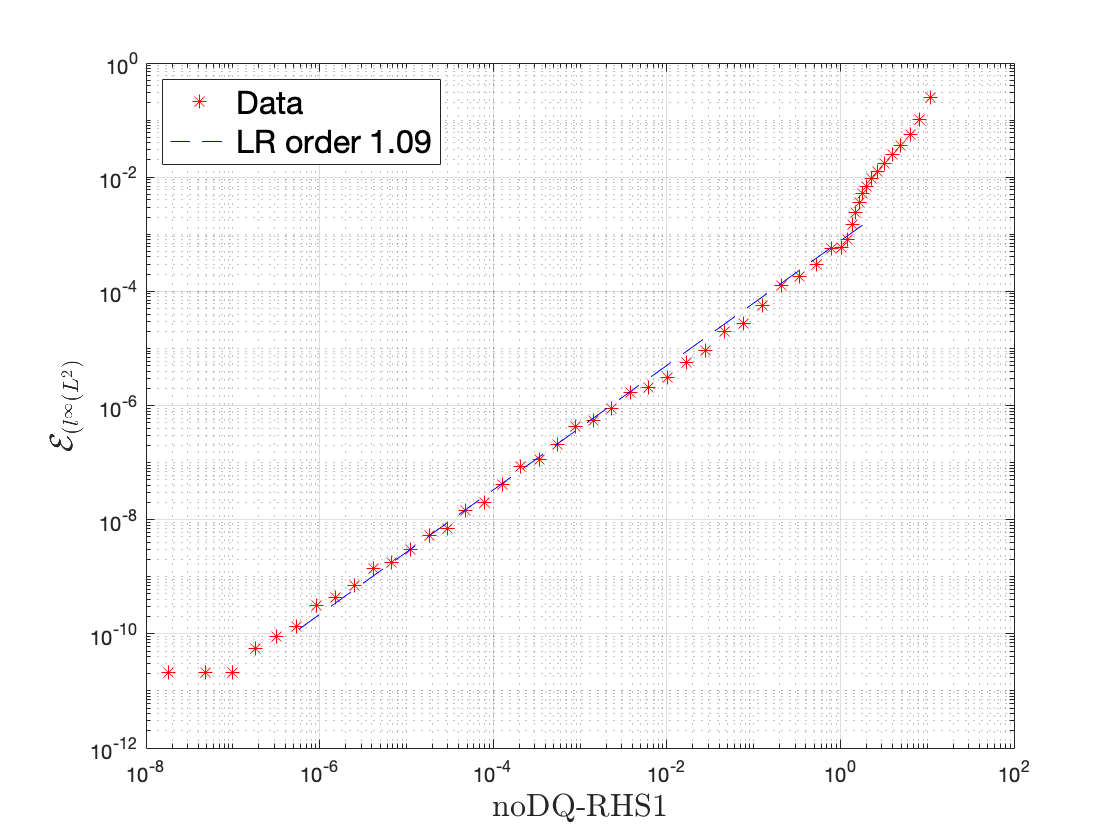}
\includegraphics[width=0.45\textwidth,height=0.3\textwidth]{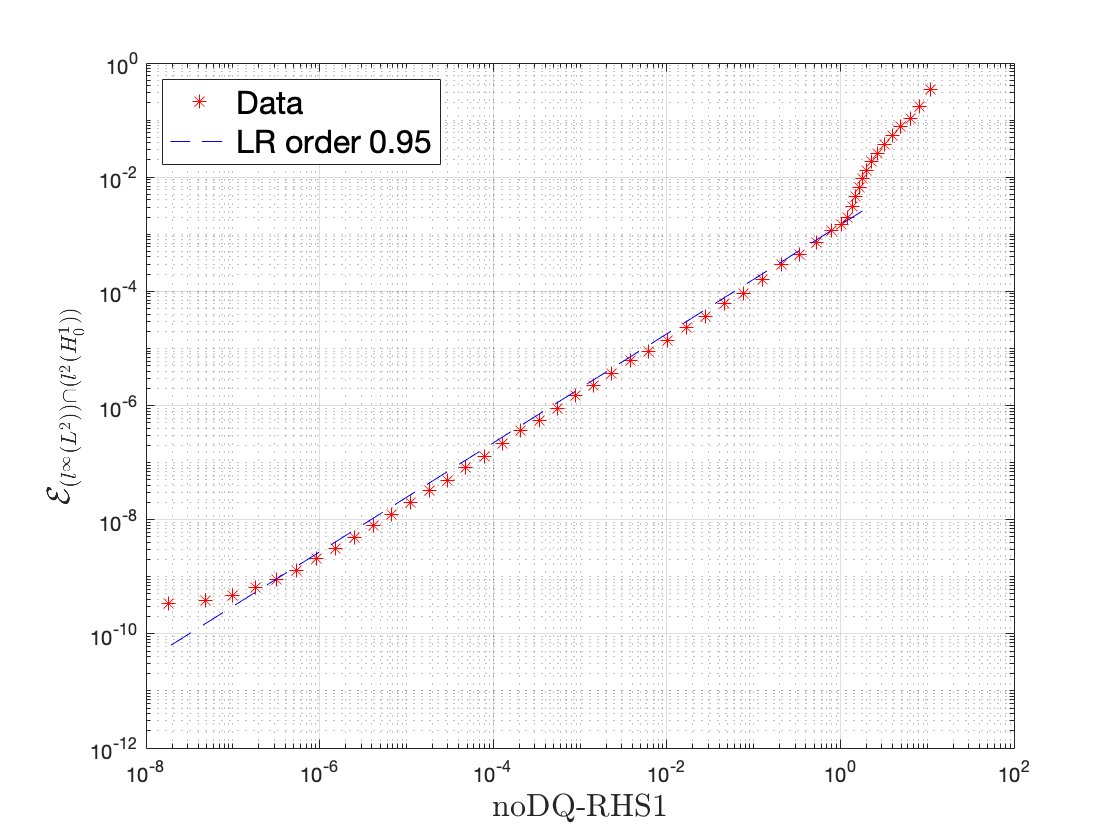}
\caption{The behavior of the $l^{\infty}(L^2)$ and natural-norm noDQ-L2 errors.
} \label{fig:nodq_l2_convergence}
 \end{center} 
\end{figure}

In the top left plot of Figure~\ref{fig:nodq_l2_convergence}, we plot the noDQ-RHS1 defined in \eqref{eqn:nodq_rhs1} and $l^{\infty}(L^2)$, natural-norm noDQ-L2 errors in \eqref{eqn:nodq_l2_error_bound}. We observe that the $l^{\infty}(L^2)$ and natural-norm noDQ-L2 errors stay below the noDQ-RHS1. The top right plot shows how the scaling of $\|S_r\|_2$, which is defined above Lemma~\ref{lemma:pod_inverse_estimate}, changes as $r$ changes. For the bottom plots in Figure~\ref{fig:nodq_l2_convergence}, we plot the linear regression (LR) orders for $l^{\infty}(L^2)$ and natural-norm noDQ-L2 errors, from left to right, respectively. Since the LR orders in the bottom plots are close to 1, we numerically observe that $l^{\infty}(L^2)$ and natural-norm noDQ-L2 errors are optimal. However, theoretical discussions in Remark~\ref{remark:nodq_max_natural_norm_error_l2_h01_pod} show that the $l^{\infty}(L^2)$  and natural-norm noDQ-L2 error bounds are suboptimal and optimal, respectively. 

In the top left plot of Figure~\ref{fig:nodq_h01_convergence}, we plot the noDQ-RHS2 defined in \eqref{eqn:nodq_rhs2} and $l^{\infty}(L^2)$, natural-norm noDQ-H01 errors in \eqref{eqn:nodq_h01_error_bound}. The top right plot shows how the scaling of $\|M_r^{-1}\|_2$, which is defined above Lemma~\ref{lemma:pod_inverse_estimate}, changes as $r$ changes. For the bottom plots in Figure~\ref{fig:nodq_h01_convergence}, we plot the linear regression (LR) orders for $l^{\infty}(L^2)$ and natural-norm noDQ-H01 errors, from left to right, respectively. Since the LR orders in the bottom plots are close to 1, we conclude that $l^{\infty}(L^2)$ and natural-norm noDQ-H01 errors are optimal. However, theoretical discussions in Remark~\ref{remark:nodq_max_natural_norm_error_l2_h01_pod} show that the $l^{\infty}(L^2)$  and natural-norm noDQ-H01 error bounds are suboptimal and optimal, respectively. 

In Figure~\ref{fig:nodq_rom_soln}, we plot the noDQ-L2 and noDQ-H01 solutions with two different $r$ values, i.e., $r=5, 20$. For both $r$ values, we observe that the noDQ-H01 yields slightly more accurate results than the noDQ-L2, especially for low $r$ values such as $r=5$.

\begin{figure}[h!] 
\begin{center}
\includegraphics[width=0.45\textwidth,height=0.3\textwidth]{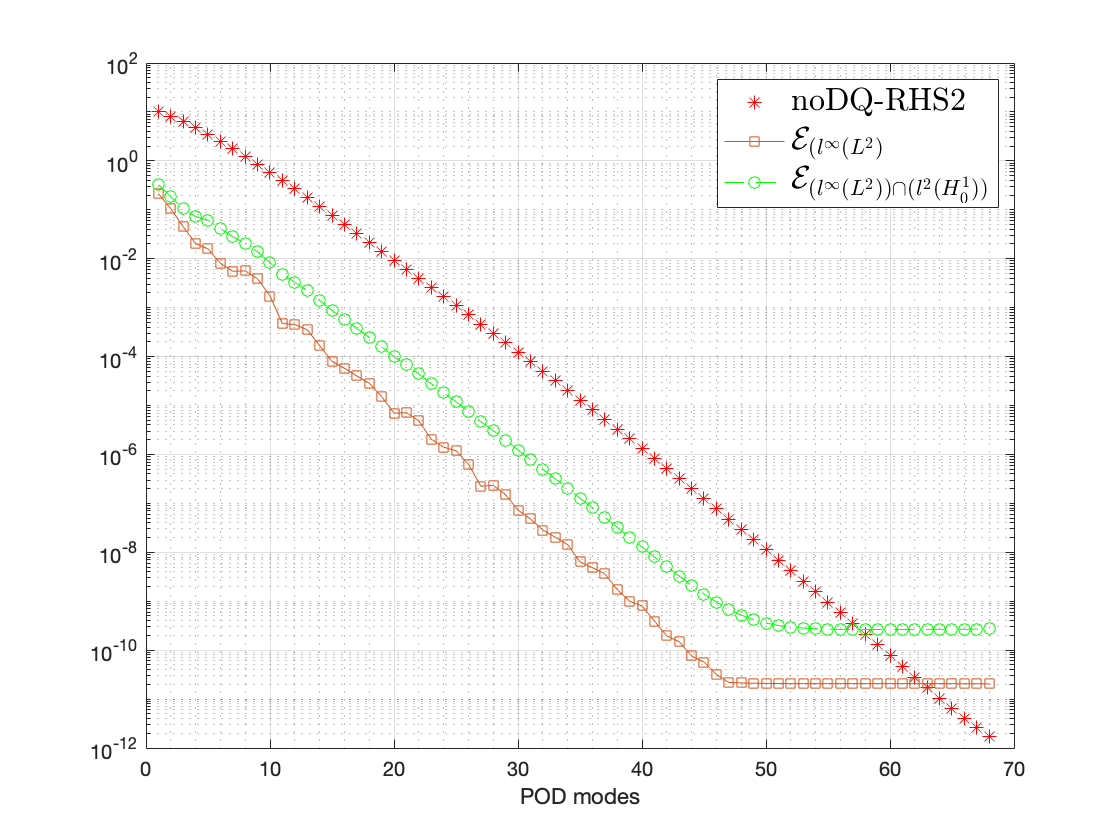}
\includegraphics[width=0.45\textwidth,height=0.3\textwidth]{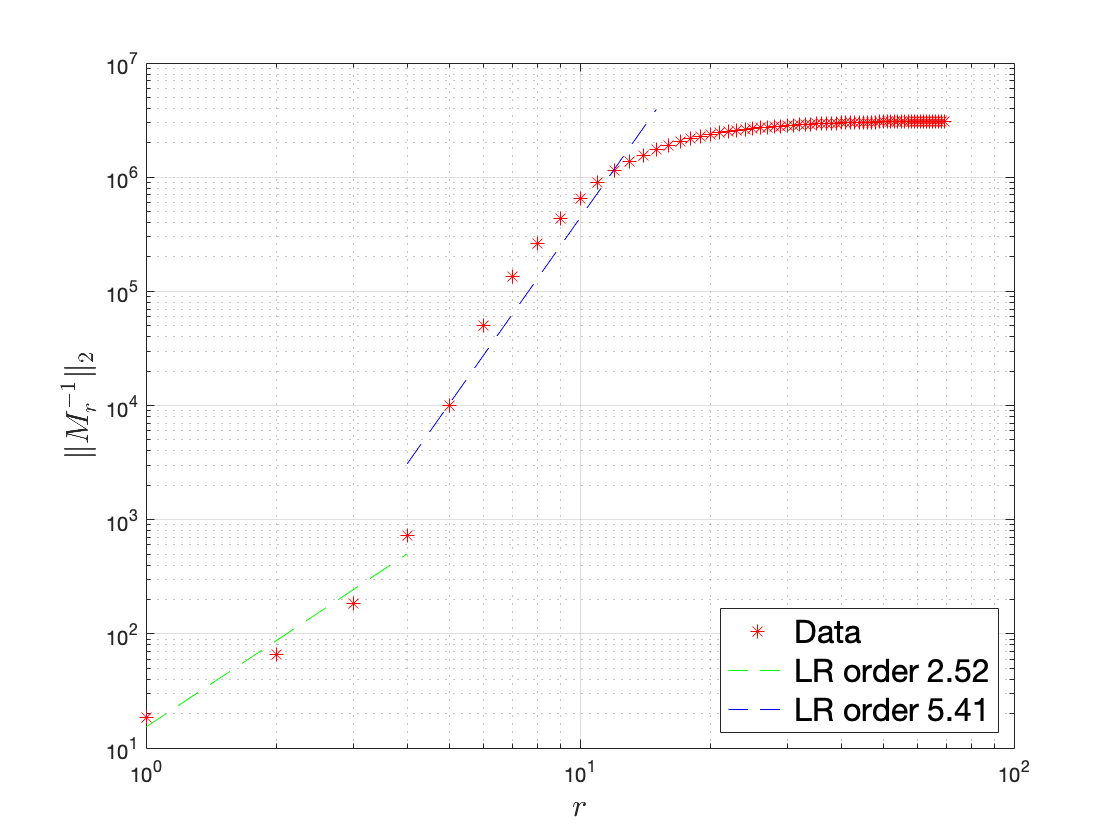}
\includegraphics[width=0.45\textwidth,height=0.3\textwidth]{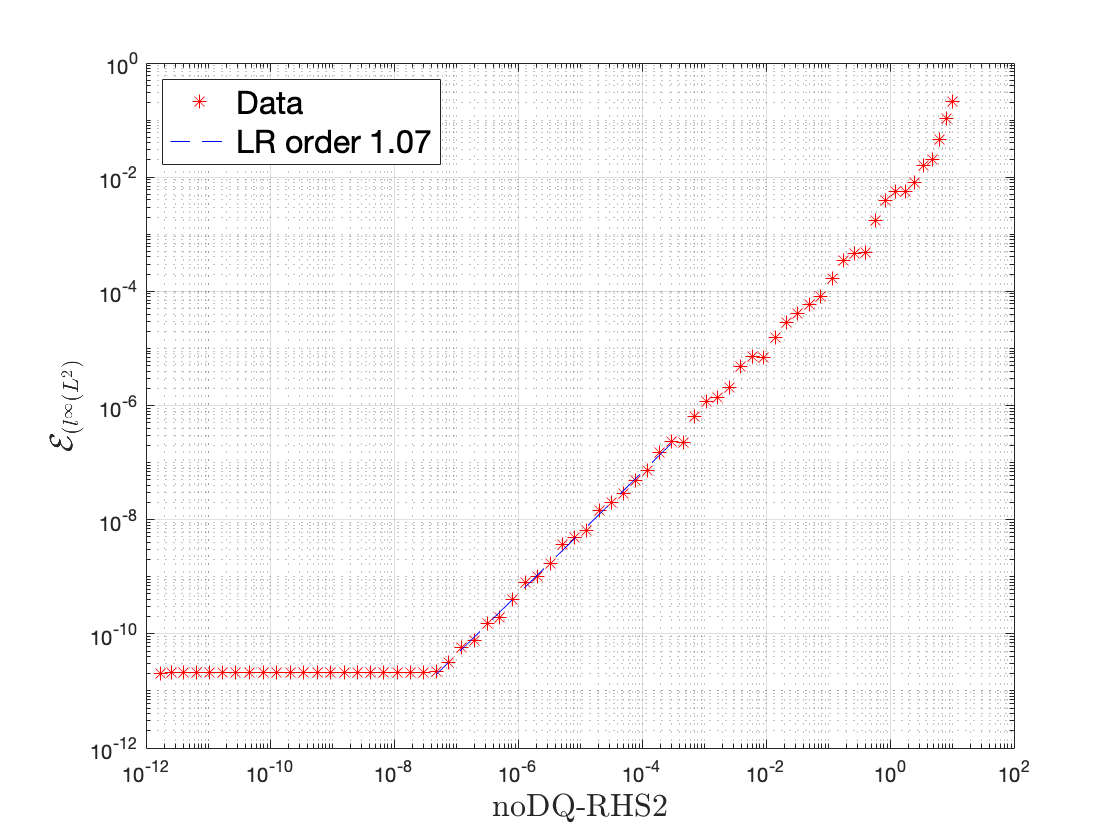}
\includegraphics[width=0.45\textwidth,height=0.3\textwidth]{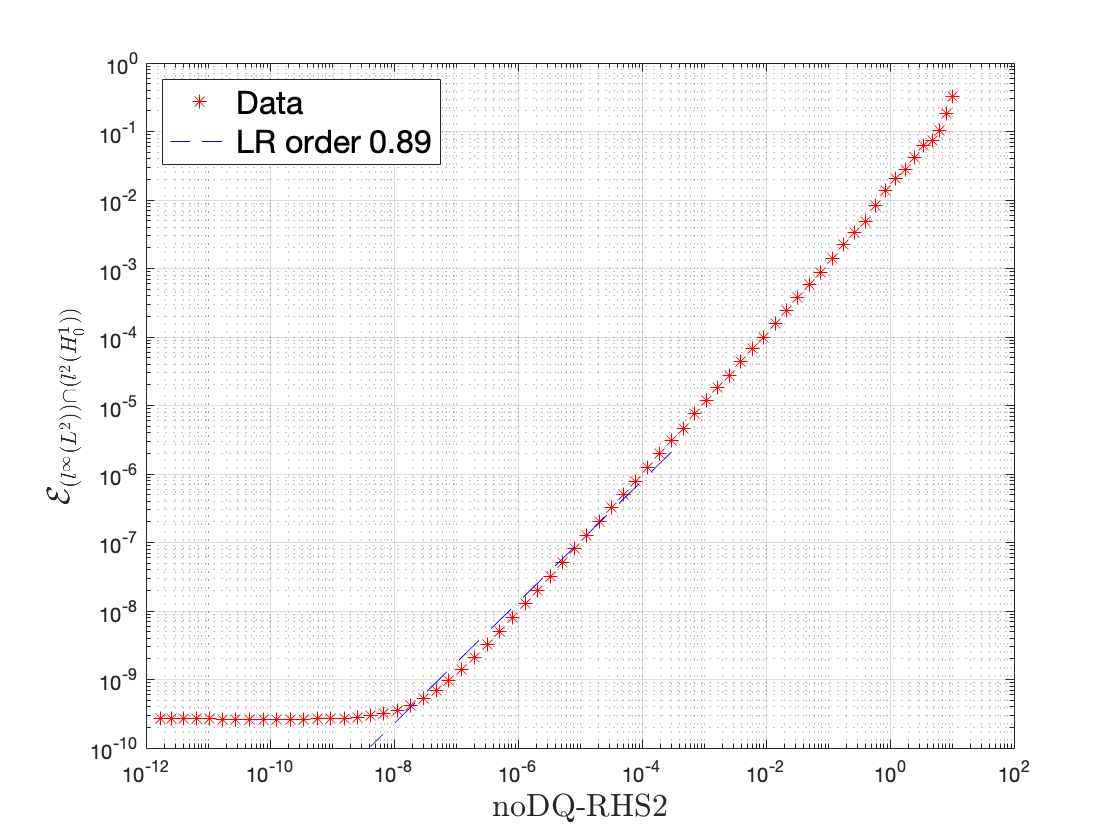}
\caption{The behavior of the $l^{\infty}(L^2)$ and natural-norm noDQ-H01 errors.
} \label{fig:nodq_h01_convergence}
 \end{center} 
\end{figure}

\begin{figure}[h!] 
\begin{center}
\includegraphics[width=0.45\textwidth,height=0.3\textwidth]{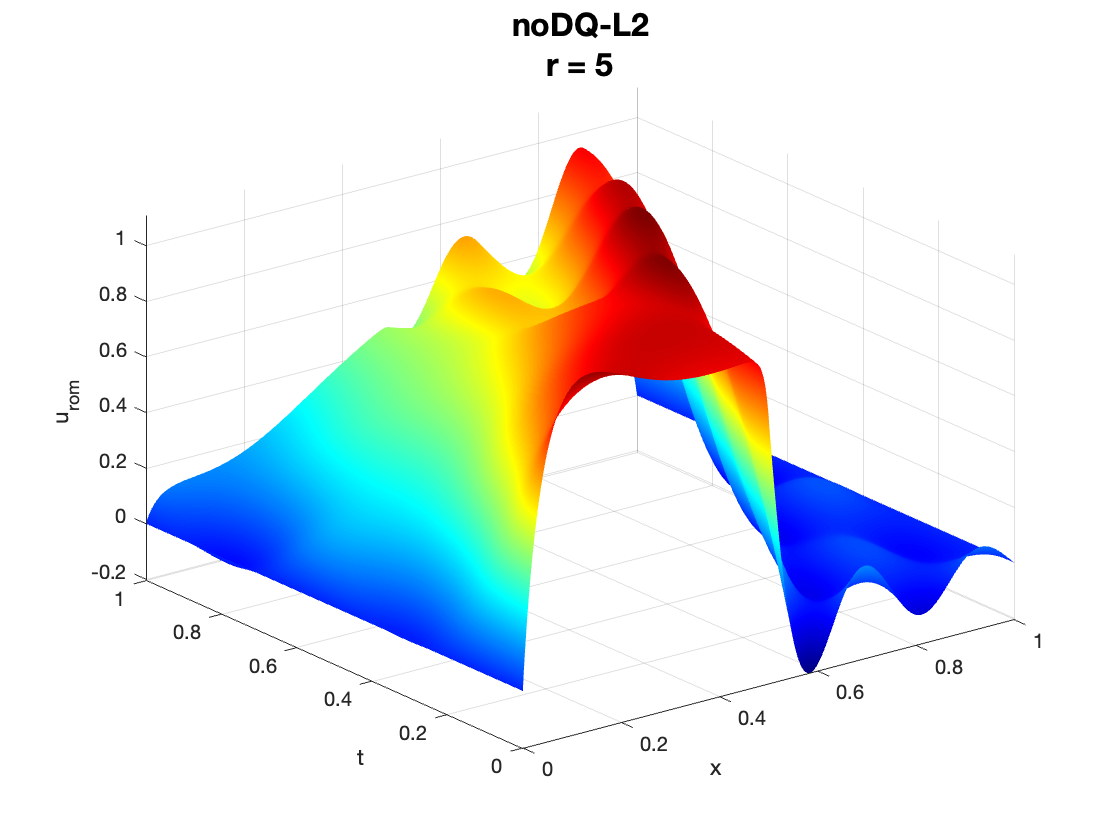}
\includegraphics[width=0.45\textwidth,height=0.3\textwidth]{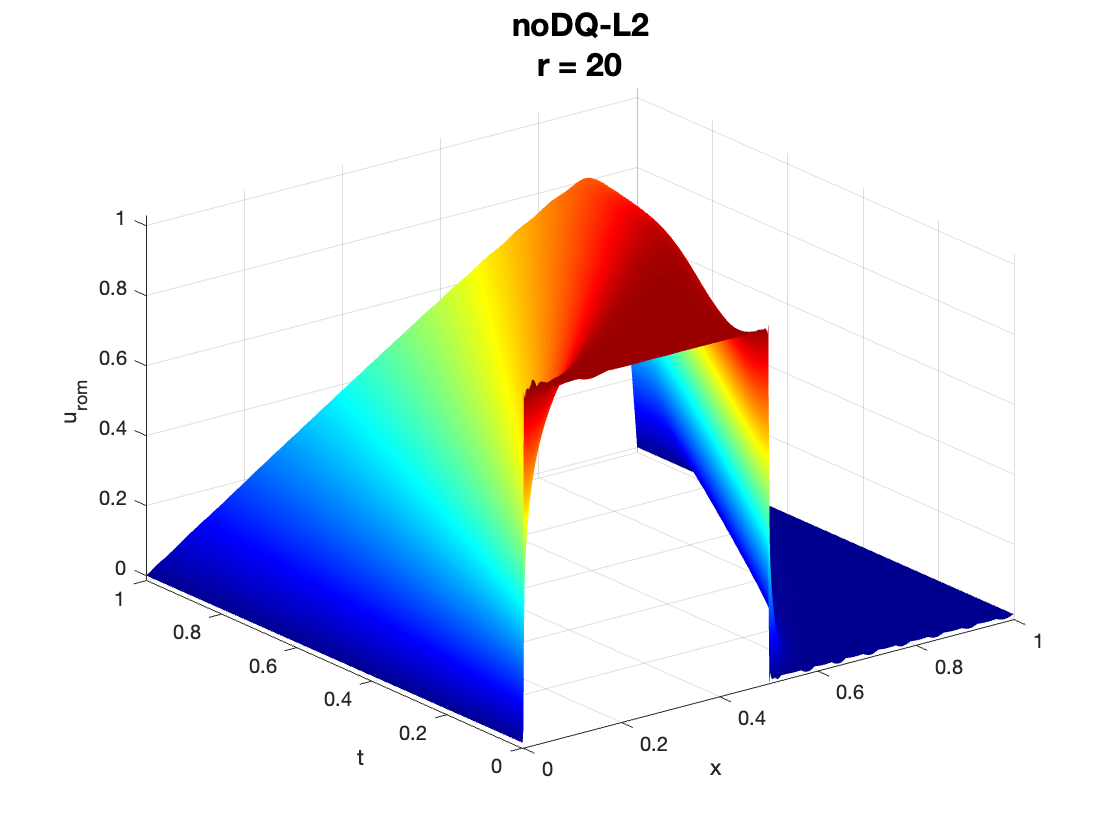}
\includegraphics[width=0.45\textwidth,height=0.3\textwidth]{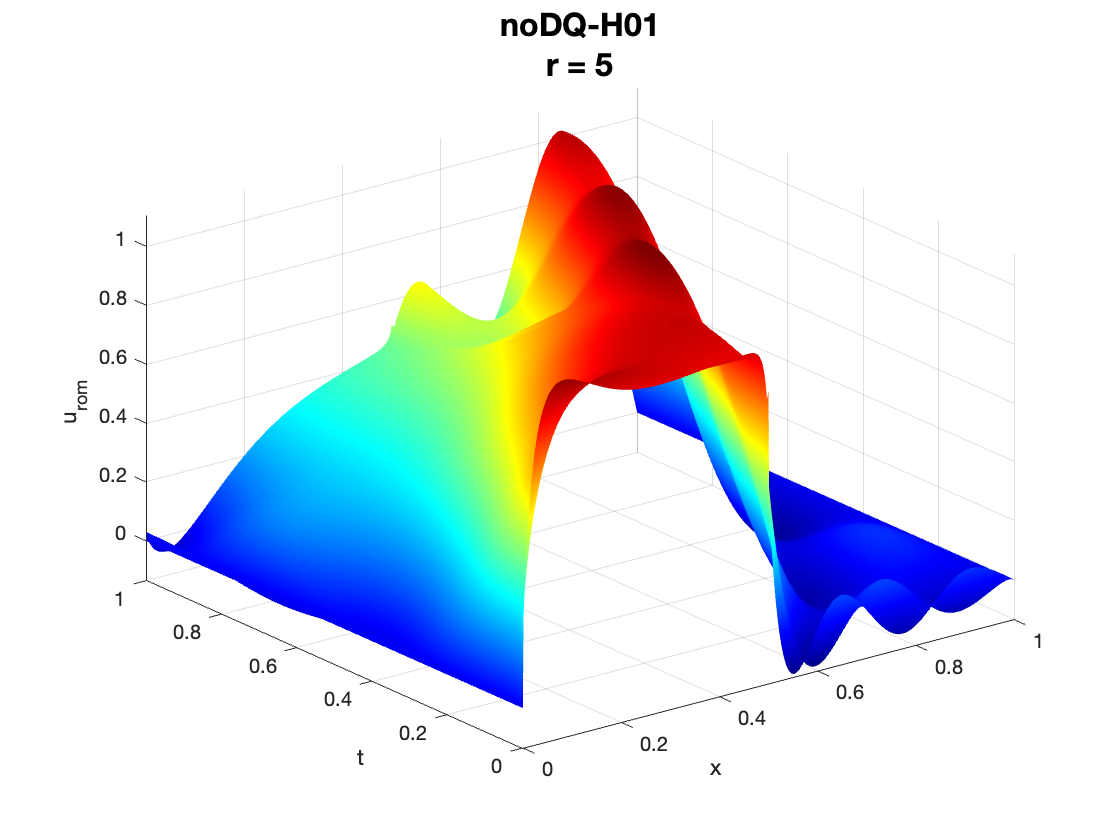}
\includegraphics[width=0.45\textwidth,height=0.3\textwidth]{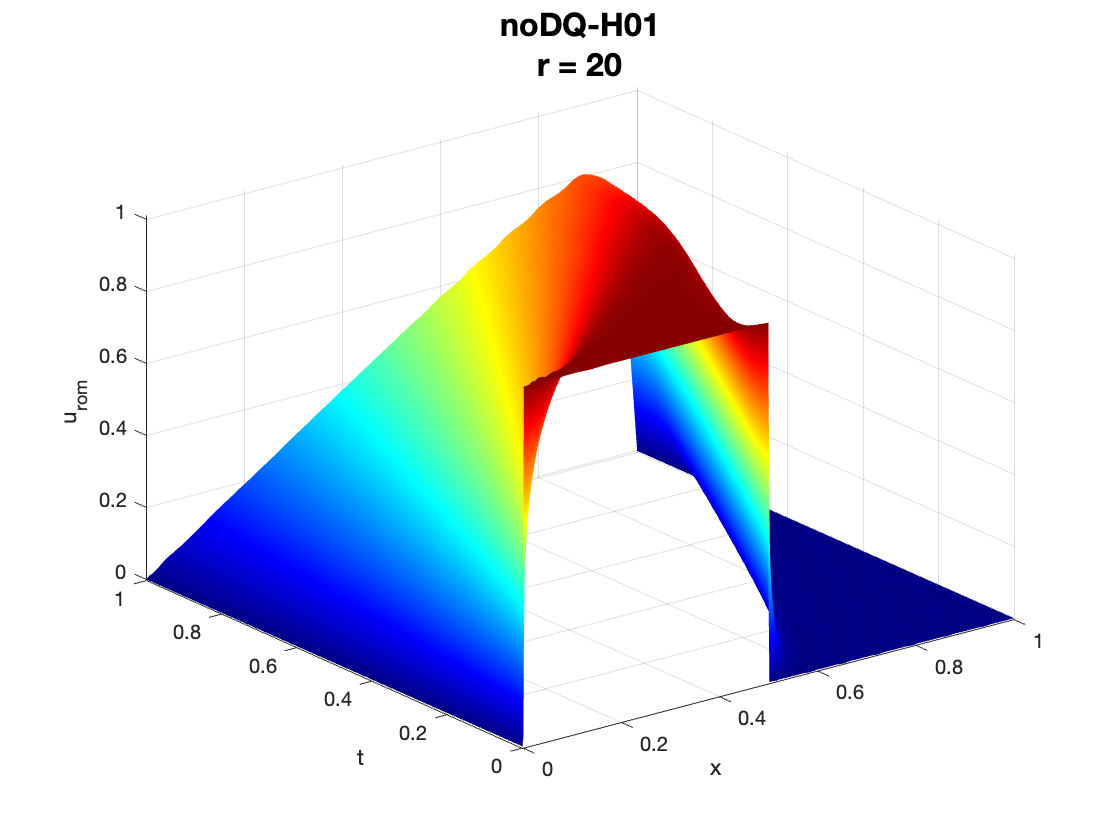}
\caption{Comparison of the noDQ-L2 and noDQ-H01 solution plots with two different $r$ values. 
} \label{fig:nodq_rom_soln}
\end{center} 
\end{figure}

%%%%%%%%%%%%%%%%%%%%%%%%%%%%%%%%%%%%%%%%%%%%%%%%%%%%%%%%%%%%%%%%%%%%%%%%
\subsection{DQ ROM Results}
\label{sec:dq_rom_numerics}
In this section, we numerically discuss the behavior of the $l^{\infty}(L^2)$ and natural-norm DQ ROM errors considering $L^2(\Omega)$ and $H_0^1(\Omega)$ POD bases.

%% DQ-L2
Based on the DQ-L2 error estimates in \eqref{eqn:linfinity_L2_dq_l2_error_bound} and \eqref{eqn:natural_norm_dq_l2_error_bound}, we define the following RHS term:
\begin{subequations}
\begin{align}
\text{DQ-RHS1} &= \sum_{i = r+1}^{d} \lambda_i^\mathrm{DQ} \Big( \| \varphi_i - R_r \varphi_i \|^2_{L^2} + \| (\varphi_i - R_r \varphi_i)_x \|^2_{L^2} \Big) + \Delta t^2 + \Delta t^4 I(u), \label{eqn:dq_rhs1}
\end{align}
\end{subequations}
to discuss the behavior of the DQ-L2 error estimates in \eqref{eqn:linfinity_L2_dq_l2_error_bound} and \eqref{eqn:natural_norm_dq_l2_error_bound}. 
In the top left plot of Figure~\ref{fig:dq_l2_convergence}, we plot the DQ-RHS1 defined in \eqref{eqn:dq_rhs1} and $l^{\infty}(L^2)$, natural-norm noDQ-L2 errors in \eqref{eqn:linfinity_L2_dq_l2_error_bound} and \eqref{eqn:natural_norm_dq_l2_error_bound}. We observe that the $l^{\infty}(L^2)$ and natural-norm DQ-L2 errors stay below the DQ-RHS1. The top right plot shows how the scaling of $\|S_r\|_2$, which is defined above Lemma~\ref{lemma:pod_inverse_estimate}, changes as $r$ changes. 
For the bottom plots in Figure~\ref{fig:dq_l2_convergence}, we plot the linear regression (LR) orders for $l^{\infty}(L^2)$ and natural-norm DQ-L2 errors, from left to right, respectively. 
The LR order for the $l^{\infty}(L^2)$ DQ-L2 error bound is more than 1.5 (1 is considered optimal); thus, we conclude that the $l^{\infty}(L^2)$ DQ-L2 is superoptimal; whereas we theoretically prove that it is optimal (see the discussion after Theorem~\ref{theorem:linfinity_L2_dq_l2_error_bound}).
For the natural-norm DQ-L2 error bound, the LR order is around 1; thus, we conclude that the natural-norm DQ-L2 is optimal. The behavior of the natural-norm error bound theoretically and numerically match (see the discussion after Theorem~\ref{theorem:natural_norm_dq_l2_error_bound} for the theoretical conclusion). 

\begin{figure}[h!] 
\begin{center}
\includegraphics[width=0.45\textwidth,height=0.3\textwidth]{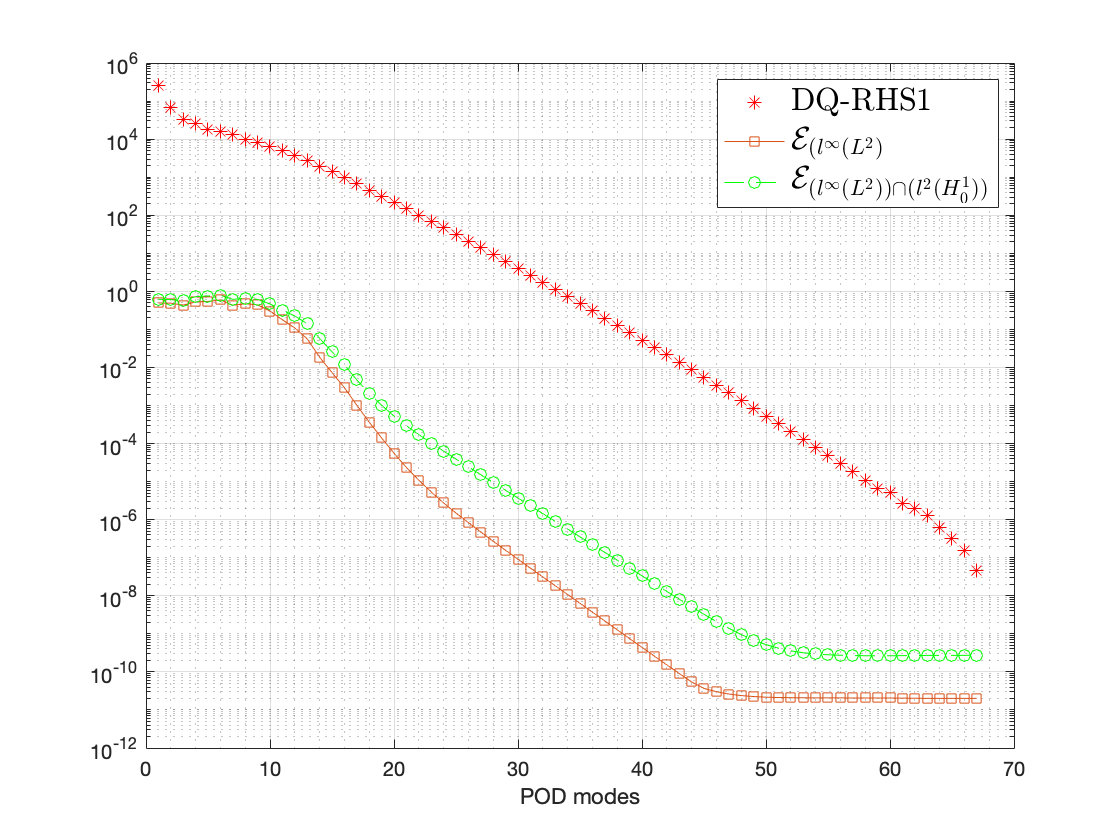}
\includegraphics[width=0.45\textwidth,height=0.3\textwidth]{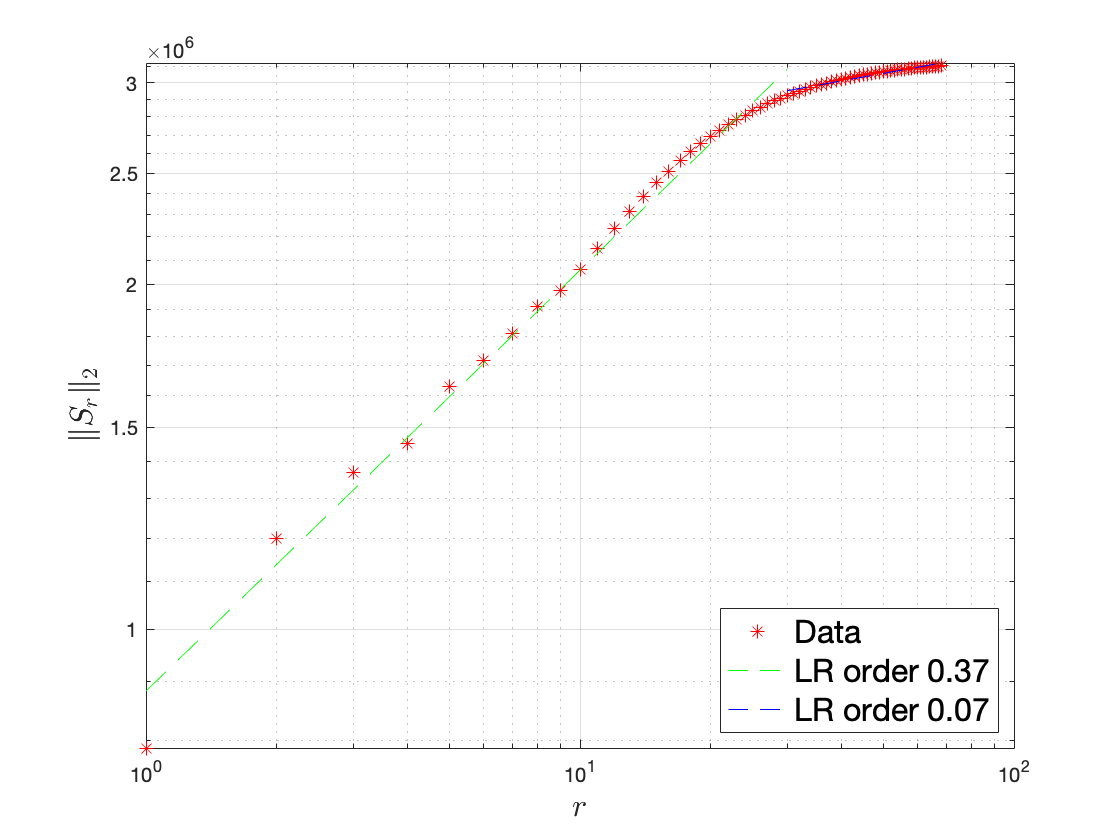}
\includegraphics[width=0.45\textwidth,height=0.3\textwidth]{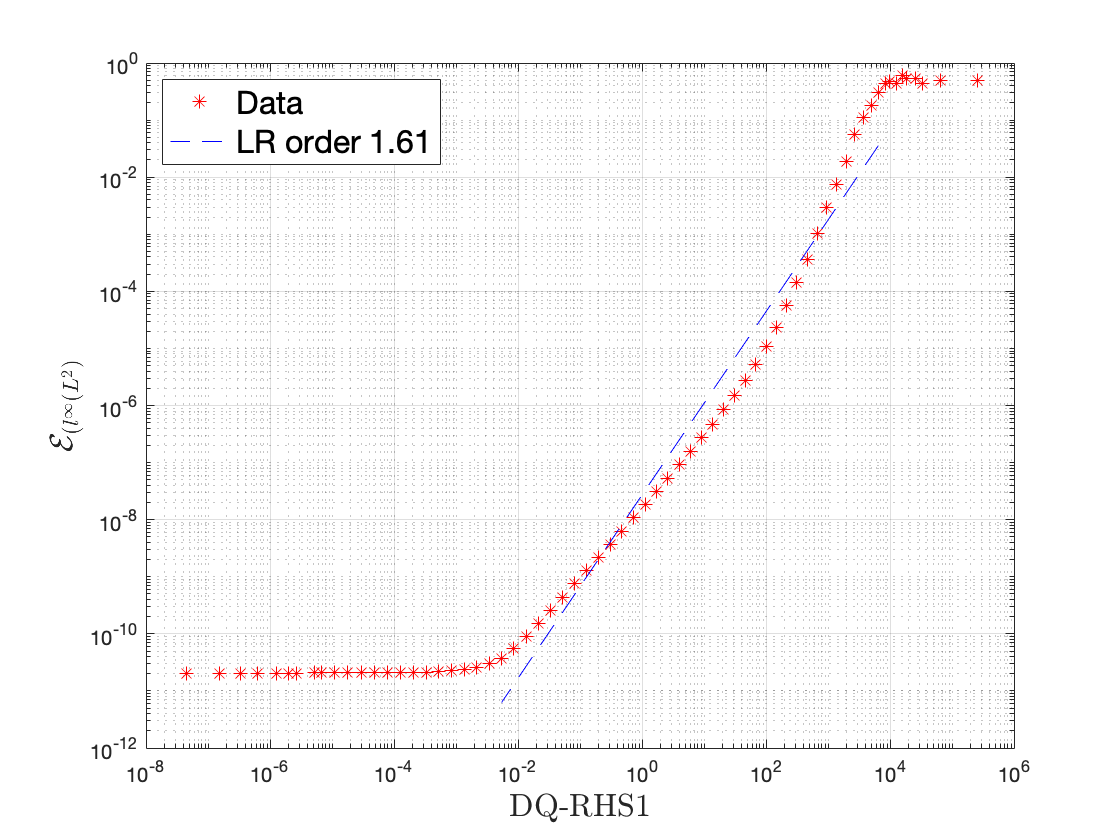}
\includegraphics[width=0.45\textwidth,height=0.3\textwidth]{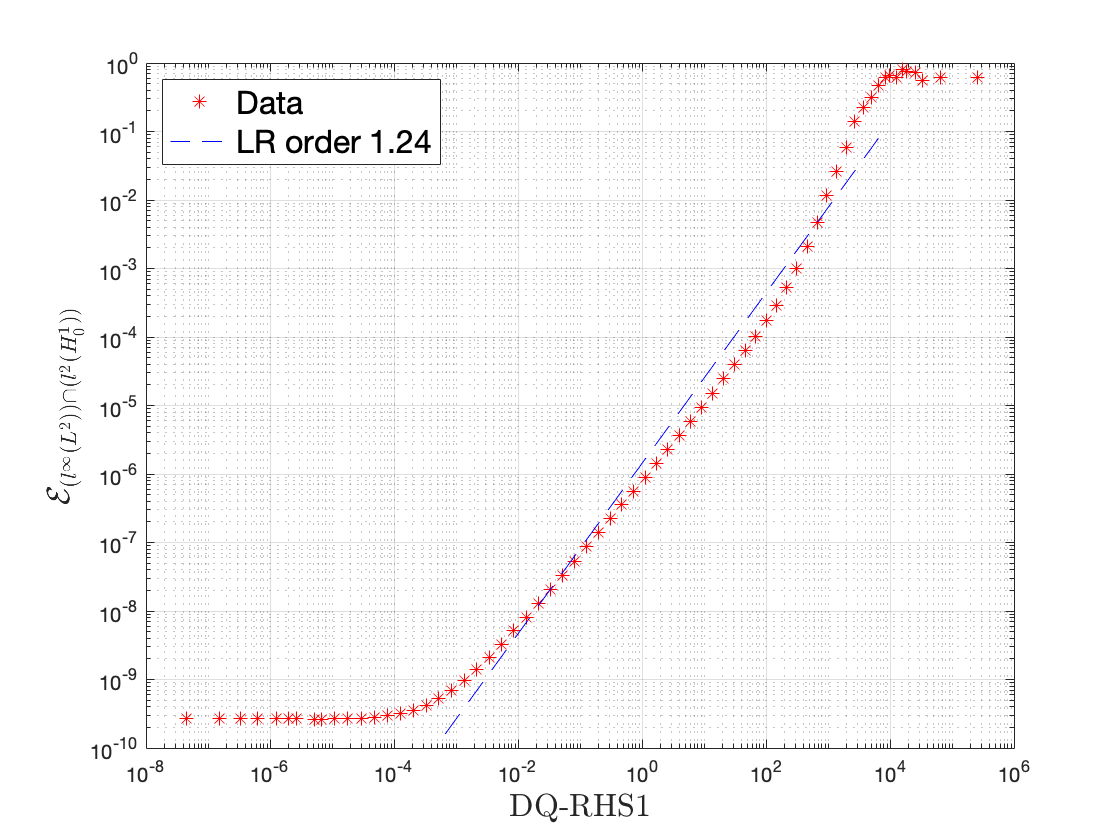}
\caption{The behavior of the $l^{\infty}(L^2)$ and natural-norm DQ-L2 errors.
} \label{fig:dq_l2_convergence}
\end{center} 
\end{figure}

%% DQ-H01
Based on the DQ-H01 error estimates in \eqref{eqn:linfinity_L2_dq_h01_error_bound} and \eqref{eqn:natural_norm_dq_h01_error_bound}, we define the following RHS term:
\begin{subequations}
\begin{align}
\text{DQ-RHS2} &= \sum_{i = r+1}^{d} \lambda_i^\mathrm{DQ} \big( 1 + \| \varphi_i \|^2_{L^2}  \big) + \Delta t^2 + \Delta t^4 I(u), \label{eqn:dq_rhs2} 
\end{align}
\end{subequations}
to discuss the behavior of the DQ-H01 error estimates in \eqref{eqn:linfinity_L2_dq_h01_error_bound} and \eqref{eqn:natural_norm_dq_h01_error_bound}. 
In the top left plot of Figure~\ref{fig:dq_h01_convergence}, we plot the DQ-RHS2 defined in \eqref{eqn:dq_rhs2} and $l^{\infty}(L^2)$, natural-norm noDQ-H01 errors in \eqref{eqn:linfinity_L2_dq_h01_error_bound} and \eqref{eqn:natural_norm_dq_h01_error_bound}. We observe that the $l^{\infty}(L^2)$ and natural-norm DQ-H01 errors stay below the DQ-RHS2. Furthermore, a sudden decrease in the $l^{\infty}(L^2)$, and natural-norm DQ-H01 errors arises when a large enough number of POD modes is attempted. We think this may be related to the sharp gradients of the solution that are well represented on the DQ ROM basis only when its dimension is large enough. The top right plot shows how the scaling of $\|M_r^{-1}\|_2$, which is defined above Lemma~\ref{lemma:pod_inverse_estimate}, changes as $r$ changes. 
For the bottom plots in Figure~\ref{fig:dq_h01_convergence}, we plot the linear regression (LR) orders for $l^{\infty}(L^2)$ and natural-norm DQ-H01 errors, from left to right, respectively. 
The LR order for the $l^{\infty}(L^2)$ DQ-H01 error bound is more than 1.5 (1 is considered optimal); thus, we conclude that the $l^{\infty}(L^2)$ DQ-H01 is superoptimal; whereas we theoretically prove that is is optimal (see the discussion after Theorem~\ref{theorem:linfinity_L2_dq_h01_error_bound}). 
For the natural-norm DQ-H01 error bound, the LR order is around 1; thus, we conclude that the natural-norm DQ-H01 is optimal. The behavior of the natural-norm error bound theoretically and numerically match (see the discussion after Theorem~\ref{theorem:natural_norm_dq_h01_error_bound} for the theoretical conclusion). 

\begin{figure}[h!] 
\begin{center}
\includegraphics[width=0.45\textwidth,height=0.3\textwidth]{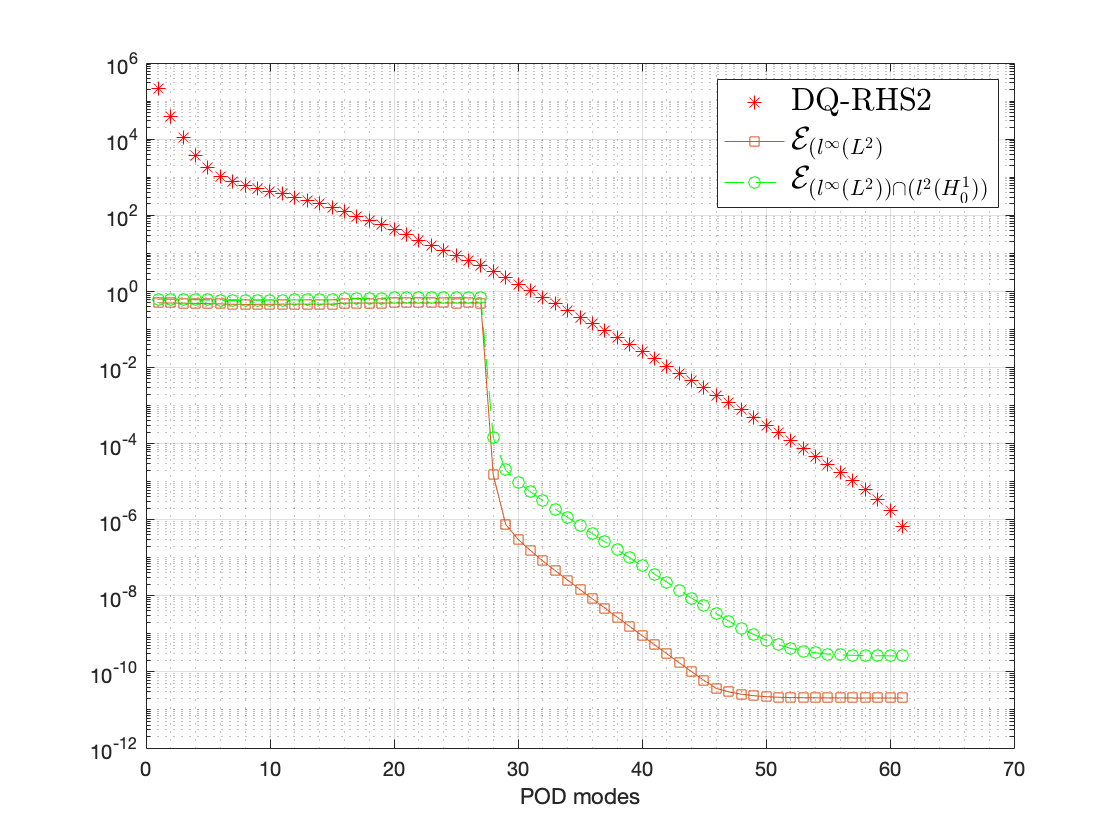}
\includegraphics[width=0.45\textwidth,height=0.3\textwidth]{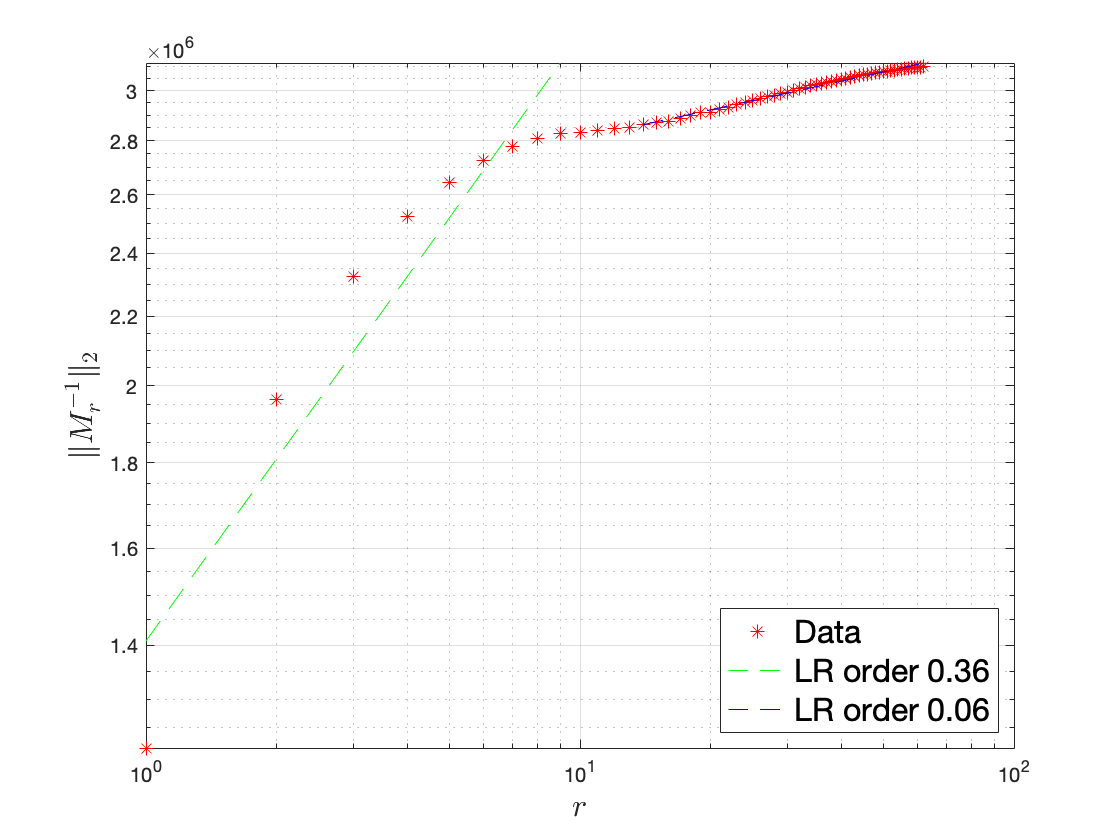}
\includegraphics[width=0.45\textwidth,height=0.3\textwidth]{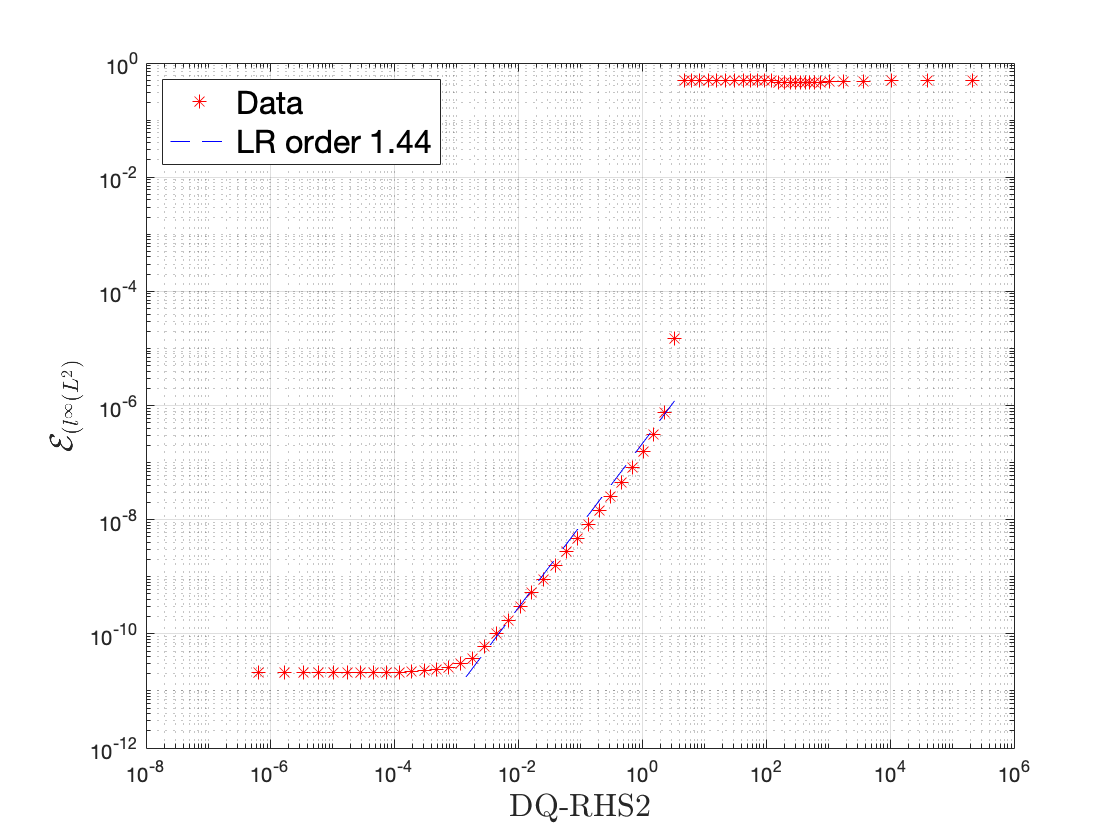}
\includegraphics[width=0.45\textwidth,height=0.3\textwidth]{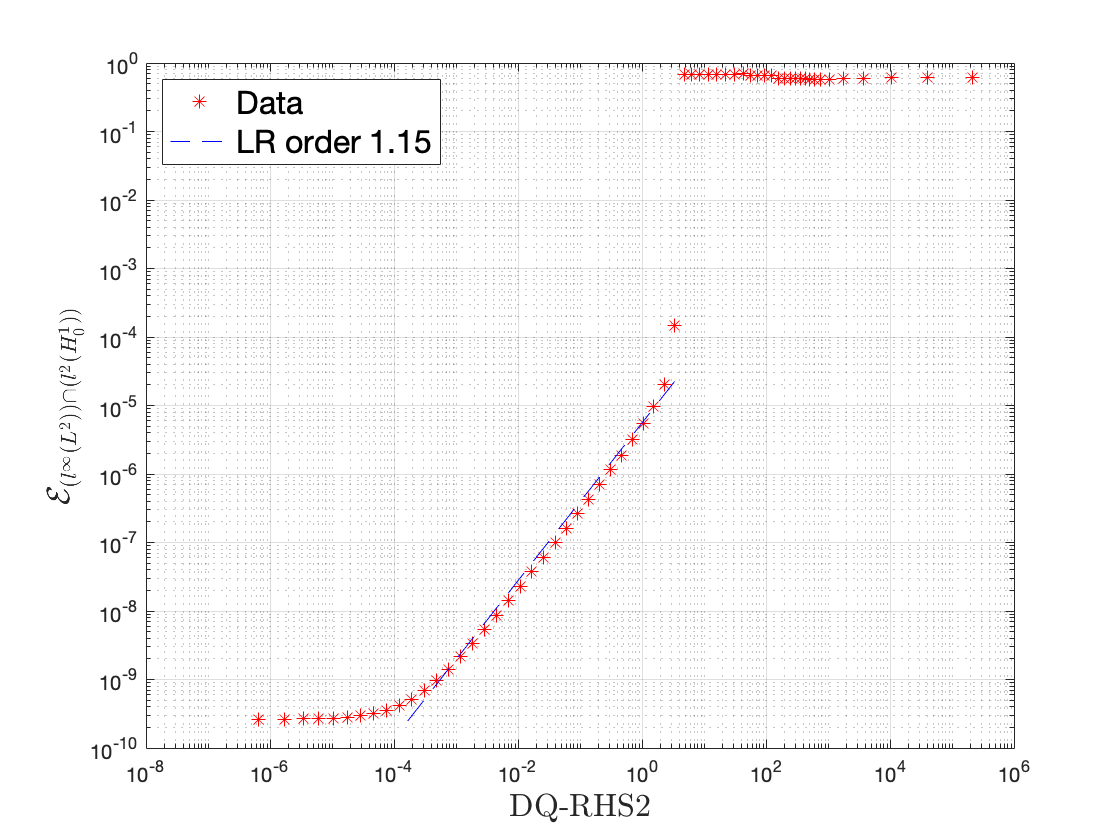}
\caption{The behavior of the $l^{\infty}(L^2)$ and natural-norm DQ-H01 errors.
} \label{fig:dq_h01_convergence}
\end{center} 
\end{figure}

\begin{table}[h!] 
\centering
\begin{tabular}{| c| c| c| } 
\hline
& $l^{\infty}(L^2)$ & \text{natural norm}  \\ 
\hline 
 & optimal & optimal \\ 
noDQ-L2 & Section~\ref{sec:nodq_rom_numerics} & Section~\ref{sec:nodq_rom_numerics} \\
& bottom-left plots in Figure \ref{fig:nodq_l2_convergence} & bottom-right plots in Figure \ref{fig:nodq_l2_convergence} \\ 
\hline 
 & optimal & optimal \\ 
noDQ-H01 & Section~\ref{sec:nodq_rom_numerics} & Section~\ref{sec:nodq_rom_numerics} \\
& bottom-left plots in Figure \ref{fig:nodq_h01_convergence} & bottom-right plots in Figure \ref{fig:nodq_h01_convergence}
\\ 
\hline 
 & superoptimal & optimal \\ 
DQ-L2 & Section~\ref{sec:dq_rom_numerics} & Section~\ref{sec:dq_rom_numerics} \\
& bottom-left plots in Figure \ref{fig:dq_l2_convergence} &   bottom-right plots in Figure \ref{fig:dq_l2_convergence}
\\ 
\hline 
 & superoptimal & optimal \\ 
DQ-H01 & Section~\ref{sec:dq_rom_numerics} & Section~\ref{sec:dq_rom_numerics} \\
& bottom-left plots in Figure \ref{fig:dq_h01_convergence} &   bottom-right plots in Figure \ref{fig:dq_h01_convergence}
\\ 
\hline 
\end{tabular}
\caption{Numerical results: The behavior of the noDQ and DQ ROMs with $L^2(\Omega)$ and $H_0^1(\Omega)$ POD basis and $l^{\infty}(L^2)$ and natural norm errors.}  \label{table:numerical_convergence_rate}
\end{table} 

In Figure~\ref{fig:dq_rom_soln}, we plot the DQ-L2 and DQ-H01 solutions with different $r$ values, i.e., $r=13,28$. For $r=13$, the DQ-H01 solution is less accurate than the DQ-L2 one since the ROM dimension $r$ does not exceed the threshold value, which is $r=27$ as can be observed in Figure~\ref{fig:dq_h01_convergence}. Furthermore, the plots show that when enough POD modes are guaranteed, the DQ-H01 solution rapidly yields an accurate solution. 

\begin{figure}[h!] 
\begin{center}
\includegraphics[width=0.45\textwidth,height=0.3\textwidth]{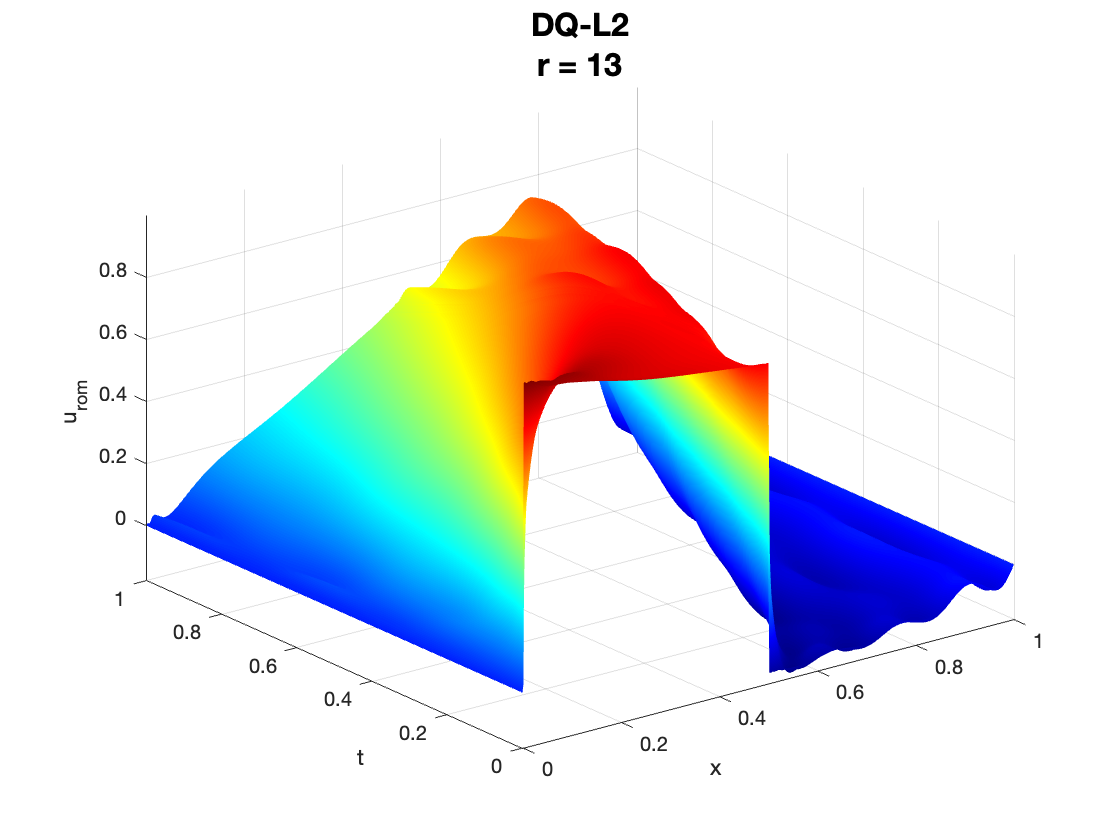}
\includegraphics[width=0.45\textwidth,height=0.3\textwidth]{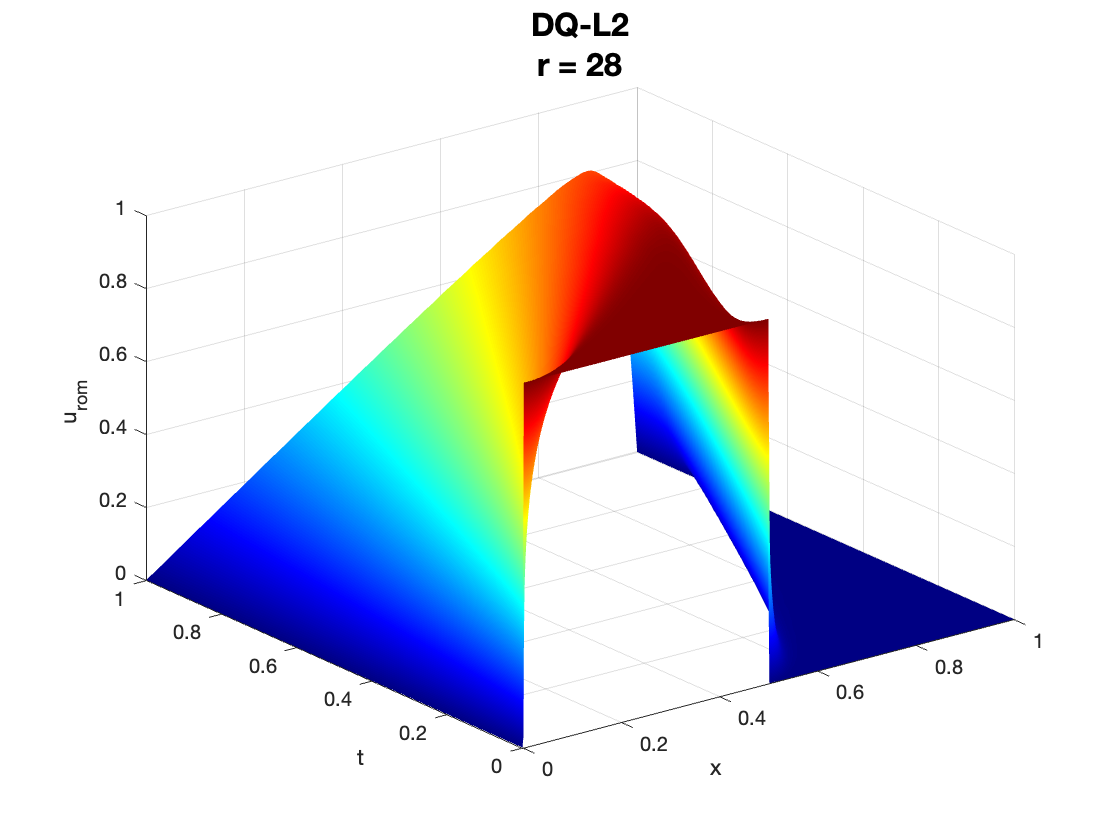}
\includegraphics[width=0.45\textwidth,height=0.3\textwidth]{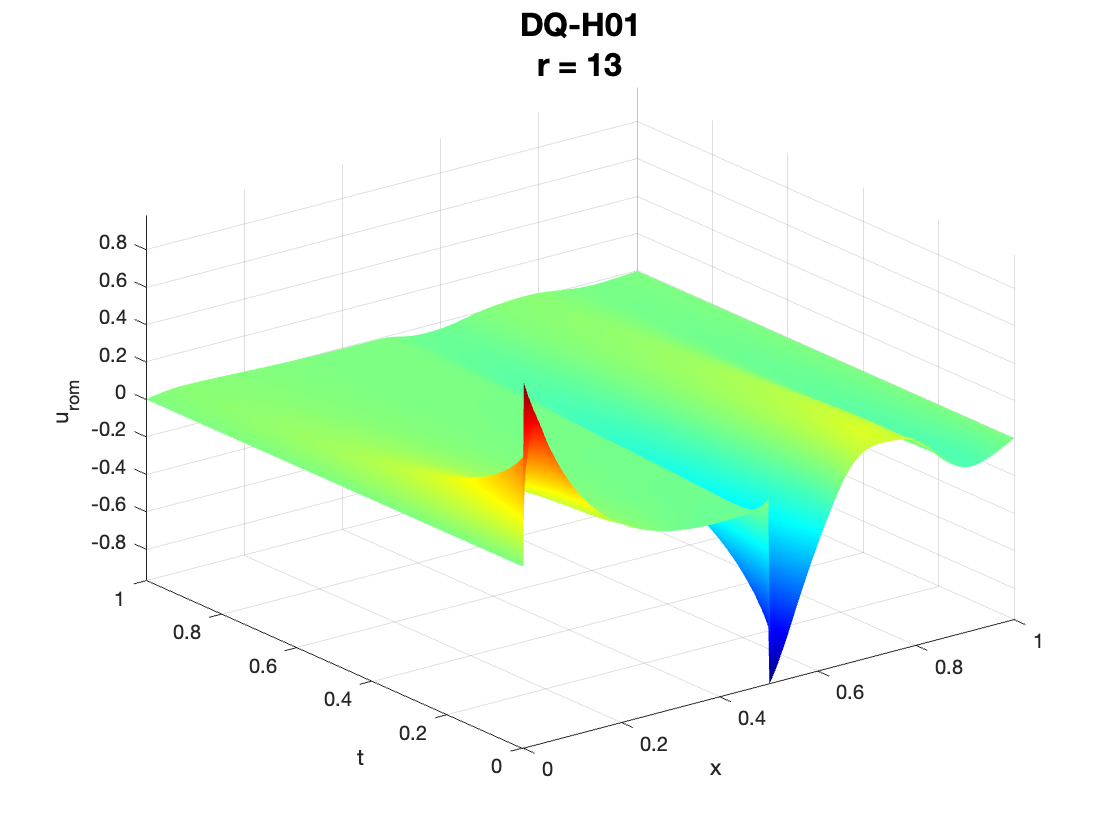}
\includegraphics[width=0.45\textwidth,height=0.3\textwidth]{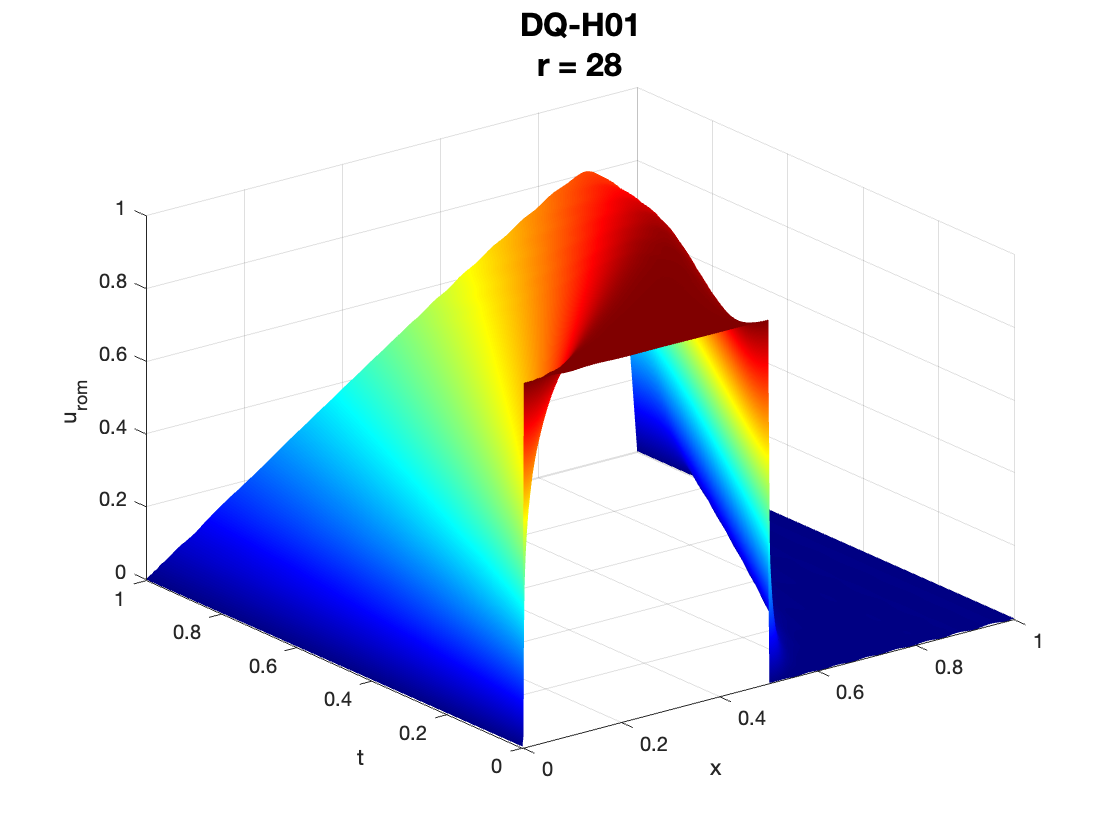}
\caption{Comparison of the DQ-L2 and DQ-H01 solution plots with two different $r$ values. 
} \label{fig:dq_rom_soln}
\end{center} 
\end{figure}

The behavior of the error bounds, which are numerically derived in Sections \ref{sec:nodq_rom_numerics} and \ref{sec:dq_rom_numerics}, are summarized in Table~\ref{table:numerical_convergence_rate}. Considering $L^2$ and $H_0^1$ basis, error norms, and noDQ/DQ frameworks, we observe that the noDQ errors bounds with all cases and the natural norm DQ-L2 and DQ-H01 are optimal; whereas the $l^{\infty}(L^2)$ DQ-L2 and DQ-H01 error bounds are superoptimal.

%%%%%%%%%%%%%%%%%%%%%%%%%%%%%%%%%%%%%%%%%%%%%%%%%%%%%%%%%%%%%%%%%%%%%%%%
\subsection{noDQ and DQ ROM Comparison}
\label{sec:nodq_dq_cn_pod_rom_comparison_numerics}
In this section, we numerically compare how all noDQ and DQ ROM errors change based on (i) the ROM dimension, i.e., $r$, (ii) the residual eigenvalues, i.e., $r_{\lambda}$, and (iii) the ratio of residual eigenvalues, i.e., $R_{\lambda}$, which are defined as
\begin{subequations}
\begin{align} 
r_{\lambda}(i):= & \sum_{i=r+1}^{d} \lambda_i, \label{eqn:residual_eig} \\
R_{\lambda}(i):= & \Big(\displaystyle \sum_{i=r+1}^{d} \lambda_i \Big) / \Big(\displaystyle \sum_{i=1}^{d} \lambda_i\Big), \label{eqn:ratio_residual_eig}
\end{align}
\end{subequations}
where $\lambda$ represents the noDQ/DQ eigenvalues, which solve \eqref{eqn:eig_problem}, and $d$ represents the number of positive eigenvalues.

In Figure~\ref{fig:nodq_dq_comparison_case3}, we plot all $l^{\infty}(L^2)$ ROM errors in the left plot and all natural-norm errors in the right plot and compare the behavior of these errors with respect to the ratio of energy kept by the ROM $R_{\lambda}$ in \eqref{eqn:ratio_residual_eig}. 
Both plots in Figure~\ref{fig:nodq_dq_comparison_case3} show that the DQ-L2 recovers a much smaller error than the noDQ-L2 for the same ratio of energy considered. Furthermore, the DQ-H01 has a better slope than the noDQ-H01, even if the DQ-H01 decays later, after stagnation. The H01-norm stagnation is possibly due to the hard test problem we have taken with very sharp gradients.

\begin{figure}[h!]  
\begin{center}
\includegraphics[width=0.45\textwidth,height=0.35\textwidth]{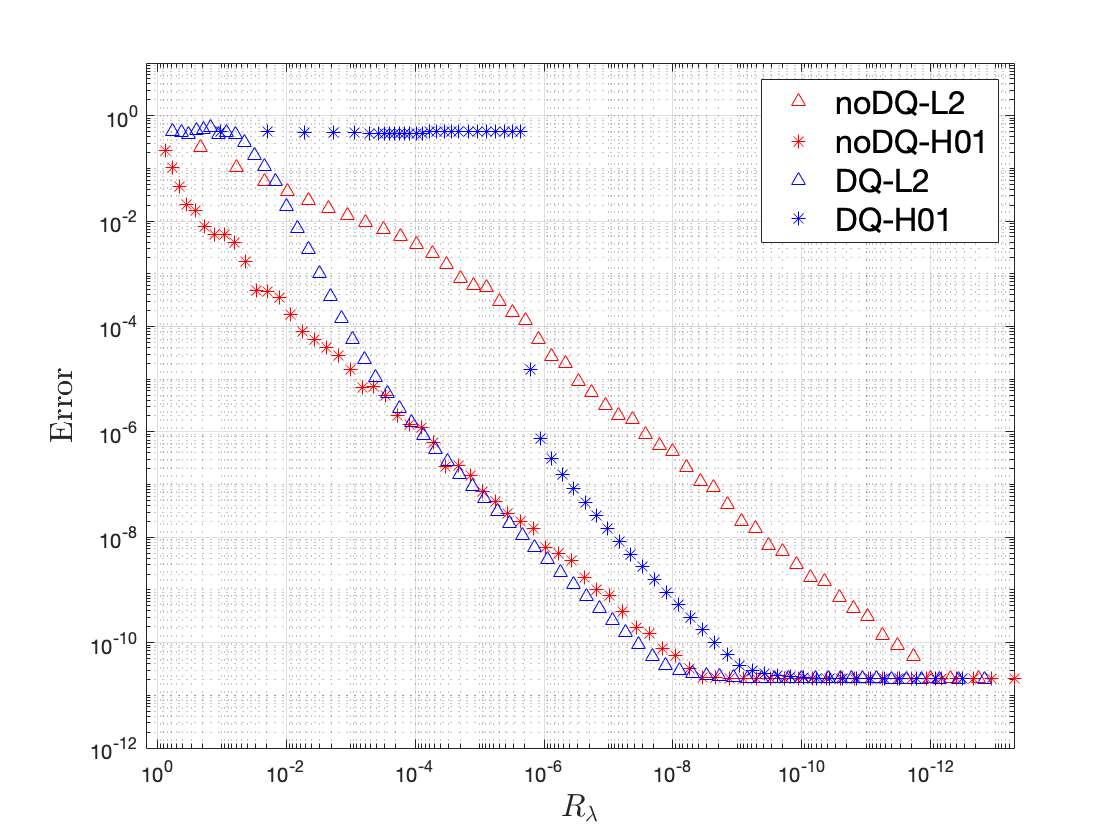}
\includegraphics[width=0.45\textwidth,height=0.35\textwidth]{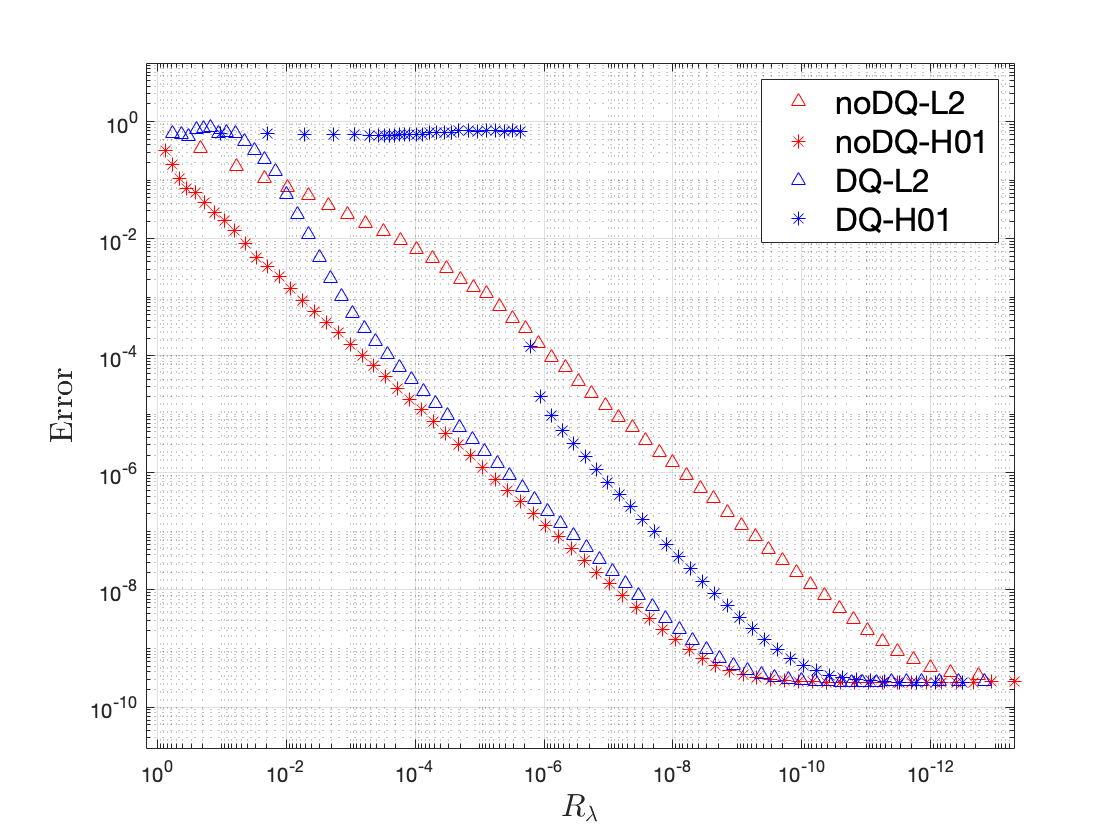}
\caption{$l^{\infty}(L^2)$ (left) and natural-norm (right) ROM errors decay with respect to the ratio of energy kept by the ROM $R_{\lambda}$.
} \label{fig:nodq_dq_comparison_case3}
\end{center} 
\end{figure}

In Figure~\ref{fig:nodq_dq_comparison_case2}, we plot all $l^{\infty}(L^2)$ ROM errors in the left plot and all natural-norm errors in the right plot and compare the behavior of these errors with respect to the energy kept by the ROM $r_{\lambda}$ in \eqref{eqn:residual_eig}. For both plots in Figure~\ref{fig:nodq_dq_comparison_case2}, the DQ ROM errors are over three orders of magnitude smaller than noDQ ROM errors over the same $r_{\lambda}$ interval. Based on these results, we may interpret that for a given energy, the DQ ROMs hold much more information than the noDQ ROMs.

\begin{figure}[h!]  
\begin{center}
\includegraphics[width=0.45\textwidth,height=0.35\textwidth]{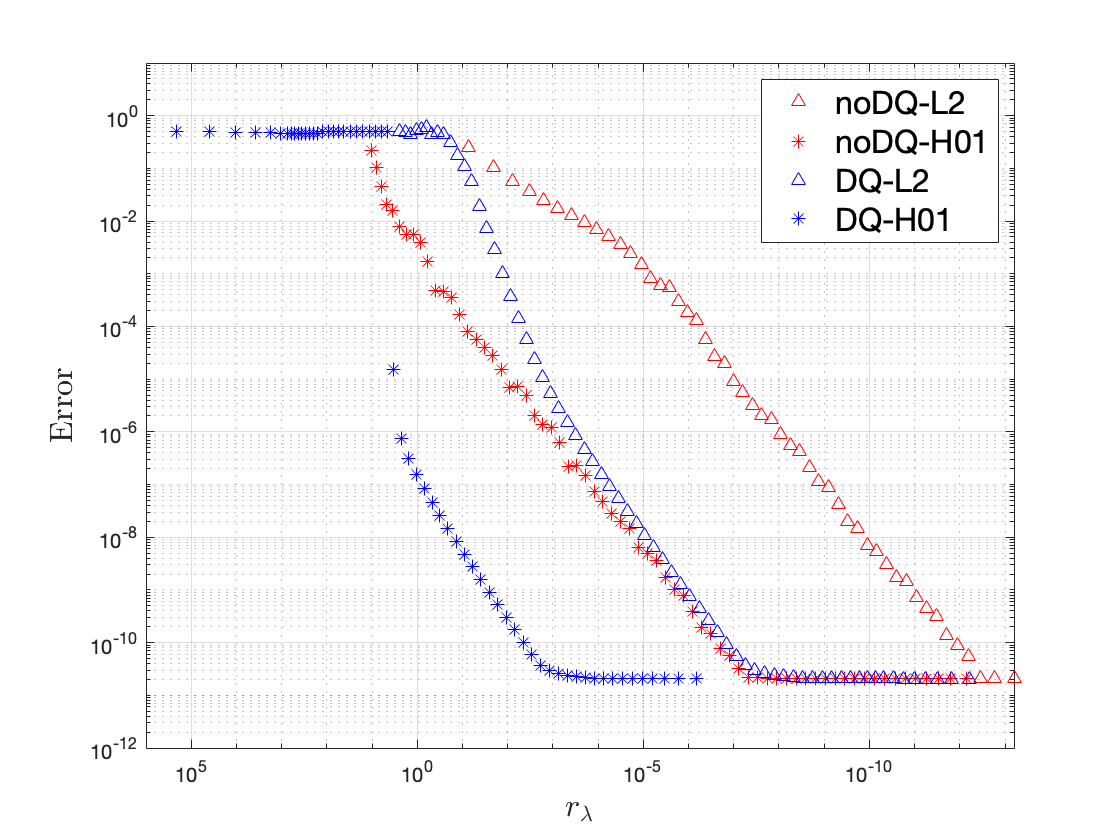}
\includegraphics[width=0.45\textwidth,height=0.35\textwidth]{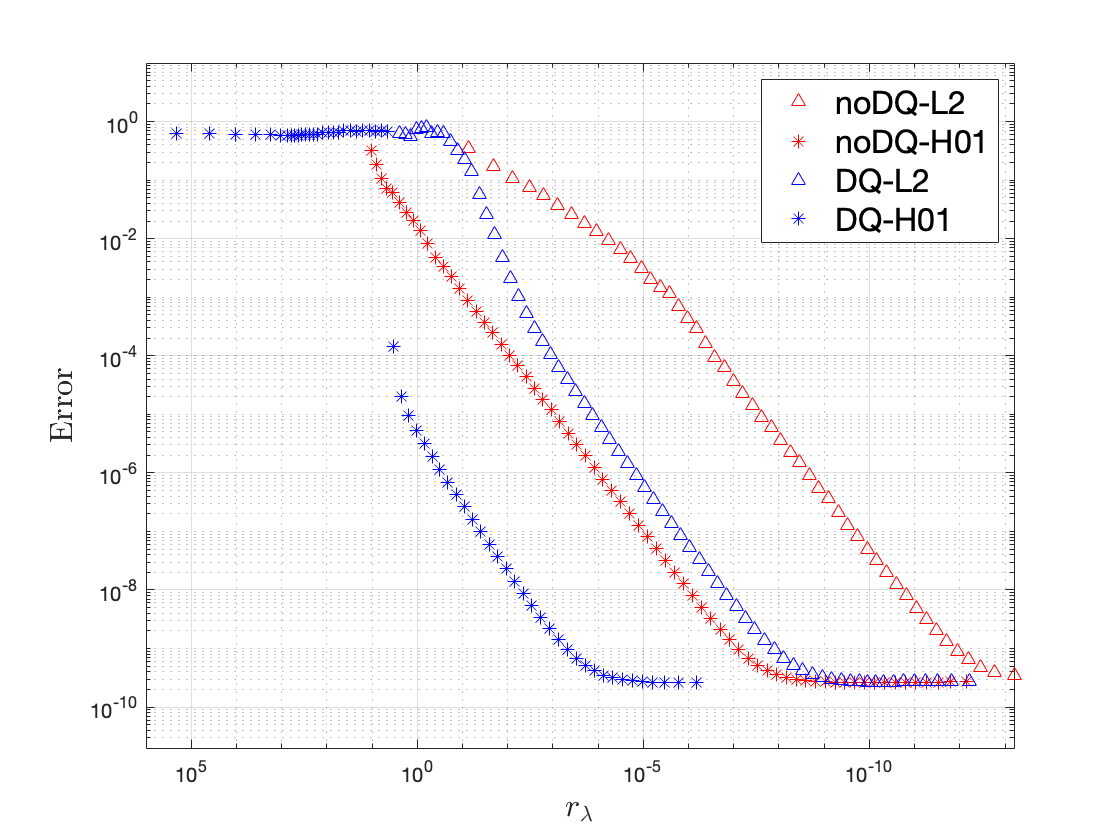}
\caption{$l^{\infty}(L^2)$ (left) and natural-norm (right) ROM errors decay with respect to the energy kept by the ROM $r_{\lambda}$.
} \label{fig:nodq_dq_comparison_case2}
\end{center} 
\end{figure}

In Figure~\ref{fig:nodq_dq_comparison_case1}, we plot all $l^{\infty}(L^2)$ ROM errors in the left plot and all natural-norm errors in the right plot and compare the behavior of these errors with respect to the ROM dimension $r$. 
For almost all $r$ values, in both plots in Figure~\ref{fig:nodq_dq_comparison_case1}, noDQ errors are almost lower than the DQ errors.
The natural-norm noDQ and DQ ROM error behaviors (right plot of Figure~\ref{fig:nodq_dq_comparison_case1}) are more explicit and clearer than the $l^{\infty}(L^2)$ noDQ and DQ ROM error behaviors (left plot of Figure~\ref{fig:nodq_dq_comparison_case1}). 
Furthermore, in both plots in Figure~\ref{fig:nodq_dq_comparison_case1}, we observe that the noDQ-L2, noDQ-H01, and DQ-L2 errors decrease progressively until $r=47$ and $r=55$, in the left and right plot, respectively, then they stagnate. However, the DQ-H01 error in both plots has a constant error behavior until $r=27$, and when it guarantees enough POD modes, it shows a drastic decrease from $r=27$ to $r=28$ and after $r=47$ in the left plot and $r=55$ in the right plot, it stagnates again.

\begin{figure}[h!]  
\begin{center}
\includegraphics[width=0.45\textwidth,height=0.35\textwidth]{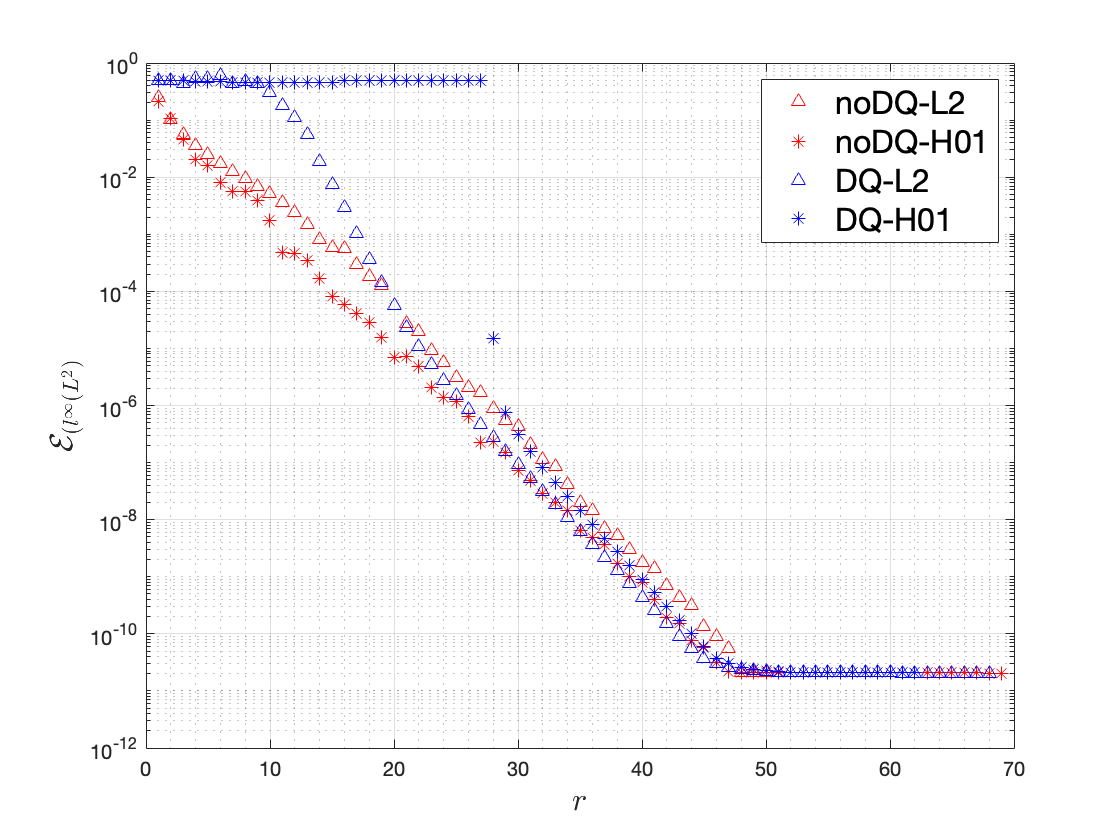}
\includegraphics[width=0.45\textwidth,height=0.35\textwidth]{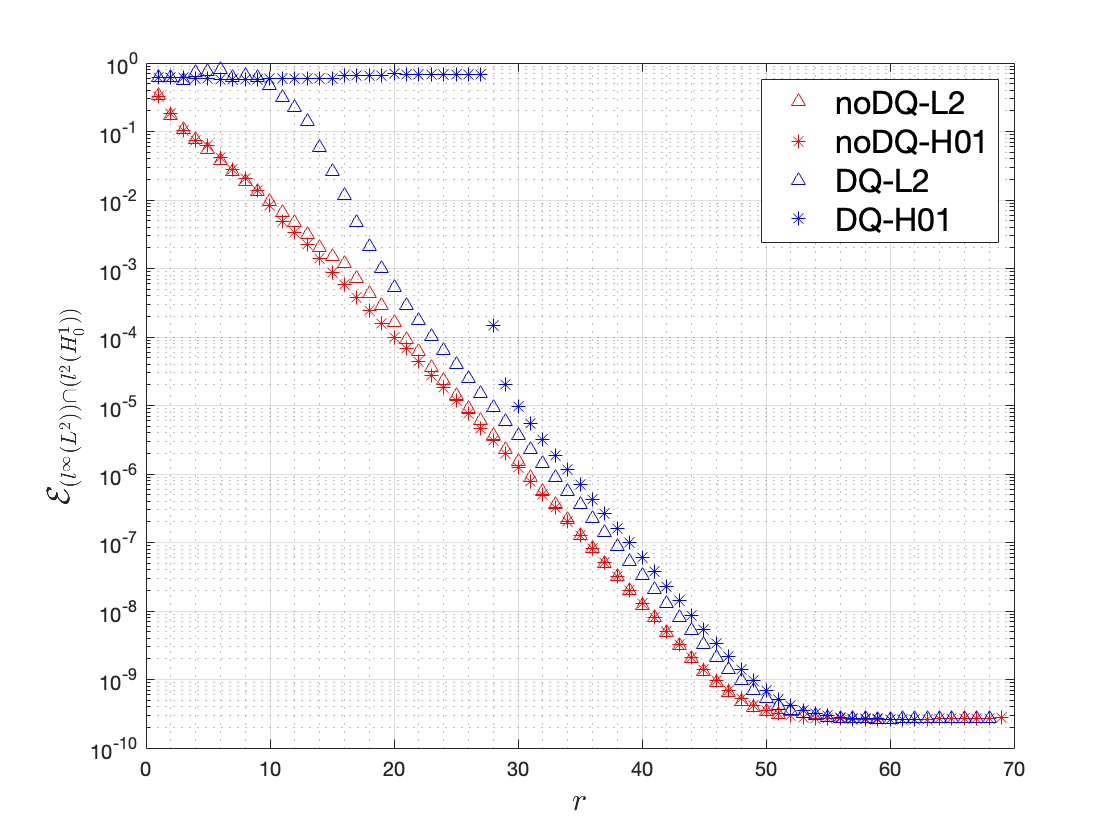}
\caption{$l^{\infty}(L^2)$ (left) and natural-norm (right) ROM errors decay with respect to the ROM dimension $r$.
} \label{fig:nodq_dq_comparison_case1}
\end{center} 
\end{figure}

From Sections~\ref{sec:nodq_rom_numerics} and \ref{sec:dq_rom_numerics}, we numerically observe that $l^{\infty}(L^2)$ noDQ-L2 and noDQ-H01 error bounds are optimal and DQ-L2 and DQ-H01 are superoptimal. If one shows that the slopes of the DQ-L2 and DQ-H01 errors are much sharper than the noDQ-L2 and noDQ-H01 errors, then superoptimal behavior makes sense.
The slope comparison of the errors in Figures~\ref{fig:nodq_dq_comparison_case3} and \ref{fig:nodq_dq_comparison_case2} is clearer than Figure~\ref{fig:nodq_dq_comparison_case1}.

In both Figure~\ref{fig:nodq_dq_comparison_case3} and \ref{fig:nodq_dq_comparison_case2}, 
we observe that the decreasing rates of the DQ-L2 errors are better than the noDQ-L2 ones, being over three orders of magnitude smaller in a wide range of $R_{\lambda}$ and $r_{\lambda}$, respectively, until stagnation occurs, likely due to round-off errors. For the H01-norm comparison, the DQ-H01 has a better slope than the noDQ-H01, even if the DQ-H01 decays later, after stagnation. Therefore, the DQ keeps better-selected information for the same amount of energy due to the difference quotients. This supports the interest in using the DQs.

%%%%%%%%%%%%%%%%%%%%%%%%%%%%%%%%%%%%%%%%%%%%%%%%%%%%%%%%%%%%%%%%%%%%%%%%
\section{Conclusions} \label{sec:conclusions}
In this paper, we provided uniform ROM error bounds of nonlinear PDEs, considering the Burgers equation as the first preliminary step considering the DQs. Overall, we theoretically proved and numerically investigated the behavior of the DQ ROM error bounds by considering $L^2(\Omega)$ and $H_0^1(\Omega)$ POD spaces and $l^{\infty}(L^2)$ and natural-norm errors. Furthermore, we provided the noDQ ROM errors without complete theoretical support to make a clear and complete conclusion.

The main results of this paper can be summarized as follows: (i) At the theoretical level, we derived four different DQ ROM error bounds by considering two error norms, i.e., $l^\infty(L^2)$ and natural-norm, and two different POD space frameworks, i.e., $L^2(\Omega)$ and $H_0^1(\Omega)$ POD spaces. 
(ii) In Section~\ref{sec:uniform_max_l2_error}, we theoretically proved that both $l^\infty(L^2)$ DQ-L2 and DQ-H01 errors are optimal with respect to the ROM discretization.
(iii) In Section~\ref{sec:uniform_natural_norm_error}, we obtained the same theoretical results obtained in Section~\ref{sec:uniform_max_l2_error} for the natural-norm DQ-L2 and DQ-H01 errors. 
(iv) In Section~\ref{sec:numerical_results}, we observed that the DQ ROM errors are several orders of magnitude lower than the noDQ errors in terms of the rate of energy kept by the ROM basis.

In Section~\ref{sec:nodq_rom_numerics}, we numerically showed that all noDQ errors have optimal behaviors. The numerical convergence rates of the $l^{\infty}(L^2)$ noDQ-L2 and noDQ-H01 errors are better than the theoretical ones since the $l^{\infty}(L^2)$ noDQ-L2 \eqref{eqn:nodq_l2_error_bound} and noDQ-H01 error \eqref{eqn:nodq_h01_error_bound} are suboptimal theoretical supports.

In Section~\ref{sec:dq_rom_numerics}, we numerically showed that the $l^{\infty}(L^2)$ DQ-L2 and DQ-H01 error bounds are superoptimal; whereas, the natural-norm DQ-L2 and DQ-H01 error bounds are optimal. In Sections~\ref{sec:uniform_max_l2_error} and \ref{sec:uniform_natural_norm_error}, we theoretically proved that all DQ ROM error bounds are optimal. Based on these results, we conclude that the numerical convergence rates for the $l^{\infty}(L^2)$ DQ-L2 and DQ-H01 are better than the theoretical ones.

Furthermore, in Section~\ref{sec:nodq_dq_cn_pod_rom_comparison_numerics}, we provided Figures~\ref{fig:nodq_dq_comparison_case3}-\ref{fig:nodq_dq_comparison_case1} to discuss the superoptimality behavior of the $l^{\infty}(L^2)$ DQ-L2 and DQ-H01 errors and the optimality behavior of the $l^{\infty}(L^2)$ noDQ-L2 and noDQ-H01 errors by comparing their slopes. We believe that considering the time dependency by the DQ inner products increases the linear regression orders.

Finally, in Figures~\ref{fig:nodq_rom_soln} and \ref{fig:dq_rom_soln}, we compared the noDQ-L2 with noDQ-H01 and DQ-L2 with DQ-H01 solution plots to understand how the POD space framework affects the accuracy of the solution for both noDQ and DQ cases. 
In Figure~\ref{fig:nodq_rom_soln}, we concluded that noDQ-L2 and noDQ-H01 yield similar results; however, for a low $r$ value, the noDQ-H01 solution is slightly more accurate than the noDQ-L2. However, the comparison between the DQ-L2 and DQ-H01 is quite different than the noDQ one. 
In Figure~\ref{fig:dq_rom_soln}, we observed that the DQ-H01 yields an accurate solution after enough POD modes are guaranteed but reaching the accurate results is quicker than the DQ-L2.

Extending and improving the effectiveness of the DQs on the Navier-Stokes equations will be our future research direction.

%%%%%%%%%%%%%%%%%%%%%%%%%%%%%%%%%%%%%%%%%%%%%%%%%%%%%%%%%%%%%%%%%%%%%%%%
\section*{Acknowledgments}
This work has been supported by Programma Operativo FEDER Andalucia 2014-2020 grant US-1254587 and European Union's Horizon 2020 research and innovation program under the Marie Sklodowska-Curie Actions, grant agreement 872442 (ARIA).

\clearpage
\bibliographystyle{abbrv}
\bibliography{reference}

\end{document}